\pgfplotsset{compat=1.15}
\Crefname{ALC@unique}{Line}{Lines}
\crefname{hypothesis}{Hypothesis}{Hypotheses}
\title{A structure preserving shift-invert infinite Arnoldi algorithm for a class of delay eigenvalue problems with Hamiltonian symmetry \thanks{Submitted to the editors on \today. \funding{This work was supported by the project C14/17/072 of the KU Leuven Research Council and the project G092721N of the Research Foundation-Flanders (FWO - Vlaanderen).}}}
\author{Pieter Appeltans\thanks{NUMA Section, Department of Computer Science, KU Leuven, Leuven (B-3001), Belgium
		(\email{pieter.appeltans@kuleuven.be},\email{wim.michiels@kuleuven.be}).}
	\and Wim Michiels\footnotemark[2]}
\DeclareMathOperator{\diag}{diag}
\DeclareMathOperator{\Span}{span}
\newcommand{\hinf}{\mathcal{H}_{\infty}}
\newcommand{\operator}{\mathcal{H}}
\newcommand{\mappedoperator}{\mathcal{R}_{\sigma}}
\newcommand{\adjointoperator}{\mathcal{G}}
\newcommand{\hinfnorm}{$\mathcal{H}_{\infty}$-norm}
\newcommand{\bilinear}[2]{\mathbf{B}\left(#1, #2 \right)}
\newcommand{\R}{\mathbb{R}}
\newcommand{\C}{\mathbb{C}}
\newcommand{\ie}{i.e.,~}
\newcommand{\eg}{e.g.,~}
\newcommand{\dd}{\mathop{}\!\mathrm{d}}
\newcommand{\negative}{\scalebox{0.7}{\(-\)}}
\newcommand{\minus}{\scalebox{0.7}{\(\,-\,\)}}
\begin{document}

\maketitle

\begin{abstract}
  In this work we consider a class of delay eigenvalue problems that admit a spectrum similar to that of a Hamiltonian matrix, in the sense that the spectrum is symmetric with respect to both the real and imaginary axis. More precisely, we present a method to iteratively approximate the eigenvalues closest to a given purely real or imaginary shift, while preserving the symmetries of the spectrum. To this end, the presented method exploits the equivalence between the considered delay eigenvalue problem and the eigenvalue problem associated with a linear but infinite-dimensional operator. To compute the eigenvalues closest to the given shift, we apply a specifically chosen shift-invert transformation to this linear operator and compute the eigenvalues with the largest modulus of the new shifted and inverted operator using an (infinite) Arnoldi procedure. The advantage of the chosen shift-invert transformation is that the spectrum of the transformed operator has a ``real skew-Hamiltonian''-like structure. Furthermore, it is proven that the Krylov space constructed by applying this operator satisfies an orthogonality property in terms of a specifically chosen bilinear form. By taking this property into account during the orthogonalization process, it is ensured that, even in the presence of rounding errors, the obtained approximation for, \eg{} a simple, purely imaginary eigenvalue is simple and purely imaginary. The presented work can thus be seen as an extension of [V. Mehrmann and D. Watkins, \textit{Structure-Preserving Methods for Computing Eigenpairs of Large Sparse Skew-Hamiltonian/Hamiltonian Pencils}, \textsc{SIAM J. Sci. Comput.} (22.6), 2001], to the considered class of delay eigenvalue problems. Although the presented method is initially defined on function spaces, it can be implemented using finite-dimensional linear algebra operations. The performance of the resulting numerical algorithm is verified for two example problems: the first example illustrates the advantage of proposed approach in preserving purely imaginary eigenvalues when working in finite precision, while the second one demonstrates its applicability to a large scale problem.
\end{abstract}

\begin{keywords}
  Non-linear eigenvalue problems, Delay eigenvalue problems, Hamiltonian eigenvalue problems, Structure preserving method, infinite Arnoldi algorithm.
\end{keywords}

\begin{AMS}
  65H17, 34K06.
\end{AMS}

	\section{Introduction}
In this manuscript, we consider non-linear eigenvalue problems (NLEVPs) of the form
\begin{equation}
\label{eq:nlevp}
M(\lambda) v = 0, 
\end{equation}
with $\lambda\in \C$ an eigenvalue and $v \in \C^{2n}\setminus \{0\}$ a right eigenvector, for which the characteristic matrix has the following structure:
\begin{equation}
\label{eq:characteristic_matrix}
M(\lambda):= \lambda I_{2n} - H_0 - \sum_{k=1}^{K} \left(H_{-k} e^{-\lambda \tau_k} + H_{k} e^{\lambda \tau_k}\right),
\end{equation}
with $0<\tau_1 < ... < \tau_K<\infty$ discrete delays and $I_{2n}$ the identity matrix of size $2n$. The matrices $H_0,H_1,H_{-1}, \dots, H_K$ and $H_{-K}$ belong to $\R^{2n \times 2n}$ and satisfy the following assumption.

\begin{assumption}
	\label{assumption:hamiltonian_structure}
	Firstly, the matrix $H_0$ is Hamiltonian meaning that 
	\begin{equation}
	\label{eq:hamiltonian_H0}
	\big(J H_{0}\big)^{\top} = JH_{0}
	\end{equation}
	in which the matrix $J$ is defined as
	\[
	J :=\begin{bmatrix}
	0 & I_n \\ -I_n & 0
	\end{bmatrix},
	\]
	with $I_{n}$ the identity matrix with size $n$.	Secondly, the matrices $H_k$ and $H_{-k}$ are related via
	\begin{equation}
	\label{eq:hamiltonian_Hk}
	\big(JH_{-k}\big)^{\top} = JH_{k} \text{ for }k=1,\dots,K,
	\end{equation} 
	with $J$ as defined above.
	
\end{assumption}

A motivation for considering NLEVPs with a characteristic matrix of the form \eqref{eq:characteristic_matrix} stems from a popular approach to compute the \hinfnorm{}, of a time-delay system \cite{Gumussoy2011a}. More specifically, consider the following state-space system with delays
\begin{equation}
\label{eq:dyn_sys}
\left\{
\begin{array}{lcl}
\dot{x}(t) &=& A_0 x(t) + \sum_{k=1}^{K} A_k x(t-\tau_k) + B w(t)\\
z(t) & =& C x(t),\\
\end{array}
\right.
\end{equation}
with $x(t) \in \R^{n}$ the state vector, $w(t)\in \R^{m}$ the performance input, $z(t)\in \R^{p}$ the performance output, $0<\tau_1 < ... < \tau_K<\infty$ discrete delays, and  $A_0$, \dots, $A_K$, $B$ and $C$ real-valued matrices of appropriate dimensions. The corresponding transfer matrix, 
\begin{equation*}
T(s) := C\left(s I - A_0 - \textstyle\sum\limits_{k=1}^{K} A_k e^{-s \tau_k}\right)^{-1} B,
\end{equation*} describes the system's input-output map in the frequency domain. If system \eqref{eq:dyn_sys} is exponentially stable, its \hinfnorm{}  is given by
\begin{equation*}
\|T(\cdot)\|_{\hinf} := \max_{\omega \in \R^{+}} \|T(\jmath \omega)\|_2,
\end{equation*} with $\jmath$ the imaginary unit. The \hinfnorm{} is an important performance measure in the robust control framework as it can be used to quantify both the input-to-output noise suppression of the system as well as the distance to instability of the system \cite{Zhou1998,Hinrichsen2005}.
To compute the \hinfnorm{} of a time-delay system, the methods presented in \cite{Michiels2010,Gumussoy2011a}, which can be seen as an extension of the well-known Boyd-Balakrishnan-Bruinsma-Steinbuch algorithm \cite{Boyd1990,Bruinsma1990a}, use a level set approach. An important component of such level set algorithms is to check whether for a given $\gamma>0$ the inequality $\|T(\cdot)\|_{\hinf}\geq \gamma$ holds. For systems of the form \eqref{eq:dyn_sys}  the following equivalence from \cite[Lemma~2.1]{Michiels2010} can be used: for $\omega \in \R$ the matrix $T(\jmath\omega)$ has a singular value equal to $\gamma$ if and only if  $\jmath\omega$ is an eigenvalue of the NLEVP associated with the following characteristic matrix,
\begin{equation}
\label{eq:nlevp_hinf}
\setlength{\arraycolsep}{3pt}
\lambda \begin{bmatrix}
I_n & 0 \\
0 & I_n
\end{bmatrix} - \begin{bmatrix}
A_0 & \gamma^{-1} BB^{\top} \\
-\gamma^{-1}C^{\top}C & -A_0^{\top}
\end{bmatrix} -\sum_{k=1}^{K} \left(\begin{bmatrix}
A_k & 0 \\
0 & 0
\end{bmatrix} e^{-\lambda \tau_k}+\begin{bmatrix}
0 & 0 \\
0 & -A_k^{\top}
\end{bmatrix} e^{\lambda \tau_k}\right).
\end{equation}
Notice that this characteristic matrix fits the structure given in \eqref{eq:characteristic_matrix}. Verifying whether $\|T(\cdot)\|_{\hinf}\geq \gamma$ is thus equivalent with checking whether the NLEVP associated with \eqref{eq:nlevp_hinf} has purely imaginary eigenvalues. For this application, it is thus important to accurately compute the purely imaginary eigenvalues. 

The spectrum, \ie the set of eigenvalues, of the considered NLEVP has some interesting features.
Firstly, notice that \eqref{eq:nlevp} bears some similarities with a retarded delay eigenvalue problem (RDEVP) \cite{Hale1993a,Gu2003a,Michiels2014}, but in contrast has both positive and negative delays. It is therefore not surprising that \eqref{eq:nlevp} generally has infinitely many eigenvalues. Secondly, due to the considered structure of the characteristic matrix and \Cref{assumption:hamiltonian_structure}, the considered class of NLEVPs can be seen as an extension of the linear eigenvalue problem 
\begin{equation}
\label{eq:evp}
\left(I_{2n}\lambda - H\right) v = 0,
\end{equation}
with $H\in \R^{2n \times 2n}$ a Hamiltonian matrix, as studied in, among others, \cite{Benner,VanLoan1984,Mehrmann2001}. It is well known that the spectrum of such eigenvalue problems is symmetric with respect to both the real and imaginary axis, meaning that eigenvalues appear either in quadruplets ($\lambda$, $\bar{\lambda}$, $-\lambda$ and $-\bar{\lambda}$) or in purely real or imaginary pairs ($\lambda$ and $-\lambda$) \cite{Mehrmann1991}. In the next section we will see that the spectrum of the considered NLEVP possesses the same symmetries.

The goal of this paper is to develop a method to accurately compute the eigenvalues of \eqref{eq:nlevp} that lie close to a given shift $\sigma$ while preserving the symmetry of the spectrum. We are particularly interested in the case for which the characteristic matrix \eqref{eq:characteristic_matrix} is large.	For large-scale linear eigenvalue problems, Krylov subspace methods, such as the (shift-invert) Arnoldi method, are well established \cite{Saad2003}. Recently, these methods have been generalized to NLEVPs, such as RDEVPs, see for example \cite{Jarlebring2010,Jarlebring2018,guttel2017,Voss2013,Jarlebring2012}. In the case of RDEVPs, the method presented in \cite{Jarlebring2010} can be interpreted in two ways. Firstly, it can be understood as applying the Arnoldi method to a sufficiently large linearisation of the original RDEVP with vectors of increasing length. Secondly, it can be interpreted as applying the Arnoldi method to an associated linear but infinite-dimensional operator, resulting in a Krylov subspace that is spanned by functions instead of vectors. This last interpretation gave rise to its name, the infinite Arnoldi method. The main benefit of this method is that it is in some sense discretization-free, as the error made by approximating the non-linear delay term can be reduced by applying more Arnoldi iterations. Here we want to apply a similar approach to compute the eigenvalues of \eqref{eq:nlevp} closest to a given purely real or imaginary shift. However, for eigenvalue problems with a Hamiltonian structure, as considered here, the regular shift-invert (infinite) Arnoldi method destroys the particular structure of the spectrum. For linear finite-dimensional eigenvalue problems, a modified shift-invert Arnoldi approach was therefore developed in \cite{Mehrmann2001}, which preserves the Hamiltonian structure. This approach was subsequently generalized to both matrix pencils and polynomial eigenvalue problems in \cite{Mehrmann2012} and \cite{Mehrmann2002}, respectively. The goal of this paper is to extend this method to the considered class of NLEVP.

The adopted approach can be interpreted as the algorithm of \cite{Mehrmann2001} applied to a linear operator whose spectrum corresponds to the one of \eqref{eq:nlevp}, instead of a matrix. An alternative approach could consist of constructing a rational or polynomial approximation of the characteristic matrix \eqref{eq:characteristic_matrix}, followed by a direct application of the algorithm of \cite{Mehrmann2002} (a so-called discretize-and-solve approach). As we are interested in computing selected eigenvalues of large-scale problems close to a shift $\sigma$ (on the real or imaginary axis) this would require an approximation that preserves the Hamiltonian symmetry of the spectrum and is accurate around $\pm \sigma$,  precluding an approximation around zero if $|\sigma|\gg0$. As we shall see, the latter property is naturally embedded in the presented shift-invert transformation of the aforementioned linear operator. Using the discretize-and-solve framework, it is also not trivial to determine a suitable degree for the discretization of the characteristic matrix such that the approximation error is sufficiently small (e.g., such that the imaginary axis eigenvalues are preserved for the \hinfnorm{} computation). It should be noted, though, that the discretize-and-solve framework does not exclude dynamic methods, for which the degree of the discretisation does not need to be specified beforehand and can change during the Arnoldi process (see, e.g., the dynamic variants of NLEIGS \cite{nleigs}). These methods are however closely related to the adopted infinite Arnoldi approach. \\

The remainder of this work is structured as follows. \Cref{sec:preliminary_results} recalls some preliminary results: first some properties of the spectrum of \eqref{eq:nlevp} are highlighted, next the most important components of the structure preserving shift-invert Arnoldi method from \cite{Mehrmann2001} are reviewed and finally the equivalence of the NLEVP associated with \eqref{eq:characteristic_matrix} and two linear infinite-dimensional eigenvalue problems, one related to the left eigenspace and one related to the right eigenspace, is demonstrated. In \Cref{sec:structure_preserving_delay}, the structure preserving shift-invert Arnoldi method from \cite{Mehrmann2001} is generalized to the considered NLEVP \eqref{eq:nlevp} using the aforementioned equivalence with a linear infinite-dimensional eigenvalue problem, giving rise to an algorithmic implementation of the structure preserving shift-invert \emph{infinite} Arnoldi method. Subsequently \Cref{sec:shift_zero,sec:non_zero_shift} discuss how this method can be implemented using finite-dimensional operations for the shift $\sigma$ employed in the shift-invert Arnoldi method equal to zero and different from zero, respectively. Next, \Cref{sec:numerical_illustration} applies the resulting numerical implementations on two example problems. The first example illustrates that the presented method indeed preserves the Hamiltonian structure of the spectrum, while the second example demonstrates its applicability to large-scale problems. Finally, \Cref{sec:conclusions} draws some concluding remarks.
\section{Preliminary results}
\label{sec:preliminary_results}
In this section we will recall some important preliminary results. 
\subsection{Properties of the considered NLEVP}
Throughout the paper a nonzero column vector $w\in \C^{2n}\setminus\{0\}$ is a left eigenvector of the characteristic matrix \eqref{eq:characteristic_matrix} associated with the eigenvalue $\lambda \in \C$ if it satisfies
\[
w^{\top} M(\lambda) = 0.
\]
Note that this definition differs from the most commonly used definition for the left eigenvector, namely, a nonzero column vector $w$ is a left eigenvector if there exists a $\lambda$ such that $w^{H}M(\lambda) = 0$. The notation used here is however common for real Hamiltonian eigenvalue problems and simplifies the notation in the remainder of this text.

Now we prove two important properties of the spectrum of the considered NLEVP \eqref{eq:nlevp}. Firstly, as mentioned before, the considered NLEVP typically has infinitely many eigenvalues. However the number of eigenvalues in any vertical strip around the imaginary axis is finite, as stated in the following proposition.

\begin{proposition}
	\label{prop:finite_number_eigenvalues_around_imag_axis}
	For any $c > 0$, the NLEVP \eqref{eq:nlevp} has only a finite number of eigenvalues in the vertical strip $\left\{z \in \C : -c < \Re(z) < c  \right\}$.
\end{proposition}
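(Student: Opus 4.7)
The plan is to combine two standard ideas: (i) eigenvalues are zeros of the entire function $\det M(\lambda)$, so they are isolated unless $\det M$ vanishes identically; and (ii) in the strip, an a priori bound shows any eigenvalue must lie in a bounded set, which is compact and hence contains only finitely many isolated zeros.

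First, I would observe that each exponential $e^{\pm \lambda \tau_k}$ is entire in $\lambda$, so $M(\lambda)$ is an entire matrix-valued function and $\det M(\lambda)$ is an entire scalar function. Its zero set is either all of $\mathbb{C}$ or a discrete set with no accumulation points in $\mathbb{C}$.

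Second, I would derive an a priori bound on the modulus of any eigenvalue in the strip. Let $\lambda$ satisfy $-c < \Re(\lambda) < c$ and let $v \in \mathbb{C}^{2n}$ with $\|v\|_2 = 1$ be a corresponding eigenvector. Rearranging $M(\lambda)v = 0$ yields
\[
\lambda v \;=\; H_0 v + \sum_{k=1}^{K}\bigl(H_{-k} e^{-\lambda\tau_k} + H_{k} e^{\lambda\tau_k}\bigr)v,
\]
and since $|e^{\pm \lambda\tau_k}| \le e^{c\tau_k}$ throughout the strip, the triangle inequality gives
\[
|\lambda| \;\le\; \|H_0\|_2 + \sum_{k=1}^{K}\bigl(\|H_{-k}\|_2 + \|H_{k}\|_2\bigr)e^{c\tau_k} \;=:\; R(c).
\]
Thus every eigenvalue in the strip lies in the compact rectangle $\{z\in\C: -c<\Re(z)<c,\ |z|\le R(c)\}$, whose closure is compact.

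Third, I would rule out the degenerate case $\det M \equiv 0$: the same bound shows that any $\lambda$ with $|\lambda|>R(c)$ in the strip is \emph{not} an eigenvalue, so $\det M(\lambda)\not\equiv 0$. Combining this with the first observation, the zeros of $\det M$ form a discrete set, and a discrete set intersected with a compact set is finite, which gives the claim.

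The only mildly delicate point is making sure one handles the ``not identically zero'' requirement for the entire function $\det M$ before invoking discreteness of zeros; the norm bound above settles this cleanly without needing any additional analysis of the structure of $M(\lambda)$.
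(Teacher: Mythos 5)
Your proof is correct, and it spells out in full the standard argument that the paper only cites (the reference to \cite[Corollary~2.6]{Michiels2010}): an a priori modulus bound on eigenvalues in the strip, obtained from $|e^{\pm\lambda\tau_k}|\le e^{c\tau_k}$, combined with the discreteness of the zero set of the entire function $\det M(\lambda)$. Your explicit handling of the non-degeneracy $\det M\not\equiv 0$ via the same bound is a nice touch that the terse citation-style proof leaves implicit.
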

\begin{proof}
	This result follows from a similar argument as in \cite[Corollary~2.6]{Michiels2010}.
\end{proof}

Secondly, we show that the spectrum of \eqref{eq:nlevp} is symmetric with respect to both the real and imaginary axis. Symmetry with respect to the real axis follows from the fact that all matrices and delays in \eqref{eq:characteristic_matrix} are real-valued, while symmetry with respect to the origin then follows from the following proposition.
\begin{proposition}
	\label{proposition:hamiltonian_symmetry}
	If $\lambda$ is an eigenvalue of \eqref{eq:characteristic_matrix} with associated right eigenvector $v_{+}$ and left eigenvector $w_{+}$, then $-\lambda$ is also an eigenvalue of \eqref{eq:characteristic_matrix} and the corresponding right and left eigenvectors are $Jw_{+}$ and $Jv_{+}$, respectively.
\end{proposition}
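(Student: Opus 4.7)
The plan is to reduce the proposition to a single algebraic identity relating $M$ at $\lambda$ and $M$ at $-\lambda$:
\[
JM(\lambda)J^{-1} = -M(-\lambda)^\top, \qquad \lambda\in\C,
\]
and then to read off both eigenvector conclusions from it.

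To derive this identity I would use the basic facts $J^\top=-J$, $J^2=-I_{2n}$, and hence $J^{-1}=-J$, and translate \Cref{assumption:hamiltonian_structure} term by term. The Hamiltonian condition $(JH_0)^\top=JH_0$ rearranges to $JH_0J^{-1}=-H_0^\top$. The mixed condition $(JH_{-k})^\top=JH_k$ gives $JH_kJ^{-1}=-H_{-k}^\top$ directly, and taking its transpose yields $JH_{-k}J^{-1}=-H_k^\top$. Substituting these into
\[
JM(\lambda)J^{-1}=\lambda I_{2n}-JH_0J^{-1}-\sum_{k=1}^{K}\bigl(JH_{-k}J^{-1}e^{-\lambda\tau_k}+JH_kJ^{-1}e^{\lambda\tau_k}\bigr)
\]
and comparing with the explicit expression for $M(-\lambda)^\top$ produces the identity. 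The only bookkeeping that deserves care is that conjugation by $J$ swaps the roles of $H_k$ and $H_{-k}$ up to a transpose and a sign, which is precisely the swap needed so that the factors $e^{-\lambda\tau_k}$ and $e^{\lambda\tau_k}$ line up correctly once $\lambda$ is replaced by $-\lambda$.

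Given the identity, both eigenvector statements follow in one line. From $M(\lambda)v_+=0$, pre-multiplying by $J$ and using the identity gives $-M(-\lambda)^\top(Jv_+)=0$, so $Jv_+$ is a left eigenvector of $M$ at $-\lambda$. From $w_+^\top M(\lambda)=0$, transposing first and then substituting the equivalent form $M(\lambda)^\top=JM(-\lambda)J$ (obtained by transposing the identity) yields $JM(-\lambda)Jw_+=0$, and pre-multiplication by $J^{-1}$ shows $M(-\lambda)(Jw_+)=0$, so $Jw_+$ is a right eigenvector of $M$ at $-\lambda$. Since $J$ is invertible, $Jv_+$ and $Jw_+$ are nonzero, confirming that $-\lambda$ is indeed an eigenvalue.

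I expect no genuine obstacle beyond sign tracking. The most error-prone step is the pair of consequences of $(JH_{-k})^\top=JH_k$, namely that conjugation by $J$ sends $H_k$ to $-H_{-k}^\top$ and $H_{-k}$ to $-H_k^\top$; it is wise to verify explicitly for one index $k$ that this swap of indices, combined with the effect $e^{\pm\lambda\tau_k}\mapsto e^{\mp\lambda\tau_k}$ of negating $\lambda$, reproduces $M(-\lambda)^\top$ before invoking the formula in general.
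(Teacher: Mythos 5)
Your proposal is correct and follows essentially the same route as the paper: both arguments transpose the left-eigenvector relation and use Assumption 1.1 to convert $M(\lambda)^{\top}$ conjugated by $J$ into $M(-\lambda)$ (up to sign). The only difference is presentational — you isolate the identity $JM(\lambda)J^{-1}=-M(-\lambda)^{\top}$ explicitly before applying it, whereas the paper carries out the equivalent manipulation inline.
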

\begin{proof}
		Since $(\lambda, w_{+})$ is a left eigenpair of $M(\lambda)$ and $J$ is nonsingular, we have that $w^{T}_{+}M(\lambda)J = 0$. Transposing this equation and using Assumption 1.1 leads to	$M(-\lambda)Jw_{+} = 0$, i.e., $(-\lambda, Jw_{+})$ is a right eigenpair of $M(\lambda)$. In a similar way it can be shown that if $(\lambda, v_{+})$ is a right eigenpair of $M(\lambda)$ then $(-\lambda, Jv_{+})$ is a left eigenpair of $M(\lambda)$.
\end{proof}

\subsection{The structure preserving shift-invert Arnoldi method for linear finite-dimensional Hamiltonian eigenvalue problems}
\label{subsec:finite_dimensional_case}
In this subsection we review the method presented in \cite{Mehrmann2001}, as our paper requires a good understanding of the results in the finite-dimensional case, which will be mirrored by their infinite-dimensional counterparts in \Cref{sec:structure_preserving_delay}. The method in \cite{Mehrmann2001} allows to compute the eigenvalues of \eqref{eq:evp} closest to a given purely real or imaginary shift $\sigma$ while preserving the Hamiltonian structure of the spectrum. Below we will assume $\sigma$ purely imaginary (the discussion for $\sigma$ purely real is similar). Let us first consider the traditional shift-invert Arnoldi method which applies the Arnoldi method to the shifted and inverted matrix
\[\left(H-\sigma I_{2n}\right)^{-1}.\]
Note that the transformed matrix is no longer real nor Hamiltonian and the purely imaginary eigenvalue $\jmath\omega$ of $H$ is mapped to the purely imaginary eigenvalue $\mu = \frac{1}{\jmath\omega-\sigma}$ of the matrix $\left(H-\sigma I\right)^{-1}$. However, when applying the standard Arnoldi procedure to the complex matrix  $\left(H-\sigma I\right)^{-1}$, the obtained approximation for $\mu$ typically does not lie on the imaginary axis since the Arnoldi procedure does not preserve the eigenvalue symmetry with respect to the imaginary axis in the constructed Hessenberg matrix. Furthermore, even if the Hessenberg matrix would contain the desired purely imaginary Ritz value, then computing the eigenvalues of this Hessenberg matrix in finite precision would introduce a small real component due to rounding errors inside the eigenvalue solver.  As a consequence, the computed approximations for purely imaginary eigenvalues of $H$ typically have a small but non-zero real part. This means that in applications for which the detection of purely imaginary eigenvalues is important, such as the application mentioned in the introduction, additional processing to determine whether an eigenvalue is purely imaginary, is necessary.

To avoid this additional processing, the structure preserving shift-invert Arnoldi method from \cite{Mehrmann2001} is preferred. For a purely real or imaginary shift $\sigma$, the Arnoldi method is now applied to the matrix 
\begin{equation}
\label{eq:shift_invert_matrix}
R_{\sigma}^{-1} :=\left(H+\sigma I_{2n} \right)^{-1}\left(H-\sigma I_{2n} \right)^{-1}.
\end{equation} 
The transformed matrix $R_{\sigma}^{-1}$ is real-valued and skew-Hamiltonian\footnote{A matrix $S$ is skew-Hamiltonian if $(JS)^{\top} = -JS$.}. Both the eigenvalues $\lambda$ and $-\lambda$ of $H$ are now mapped to the eigenvalue $\mu = \frac{1}{\lambda^2-\sigma^2}$ of $R_{\sigma}^{-1}$ (which consequently has multiplicity two). For the traditional Arnoldi method such multiple eigenvalues would hamper the convergence behavior. Yet, as we will see below, each eigenvalue of $R_{\sigma}^{-1}$ will only appear once in the Hessenberg matrix obtained by applying the Arnoldi procedure to $R_{\sigma}^{-1}$. More specifically, consider the following Krylov subspace generated by $R_{\sigma}^{-1}$,
\begin{equation}
\label{eq:krylov_space_fd}
\mathrm{K}_{m}\left(R_{\sigma}^{-1},q_1\right) = \Span\left\{q_1, R_{\sigma}^{-1}q_1, R_{\sigma}^{-2}q_1, \dots,R_{\sigma}^{-(m-1)}q_1\right\},
\end{equation}
with $q_1\in \R^{2n}$ an arbitrary real-valued starting vector. It can be shown that this subspace is $J$-neutral (sometimes also referred to as isotropic), meaning that for each pair of vectors $x$ and $y$ in this subspace, the equality $x^{\top}Jy = 0$ holds, see \cite[Proposition 3.3]{Mehrmann2001}. This has the following important consequence.
\begin{proposition}
	\label{proposition:dimension_intersection}
	Let $q_1$ be an arbitrary real-valued vector of length $2n$ and let $\lambda\neq 0$ be a simple eigenvalue of \eqref{eq:evp} with corresponding right eigenvector $v_{+}$ and let $v_{-}$ be a right eigenvector associated with the eigenvalue $-\lambda$ of $H$, then the dimension of the intersection of $\Span\{v_{+},v_{-}\}$ and $\mathrm{K}_m\left(R_{\sigma}^{-1},q_1\right)$ is at most 1.
\end{proposition}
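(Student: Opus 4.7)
The plan is to argue by contradiction: I would assume the intersection has dimension~$2$ and show that this violates the $J$-neutrality of the Krylov subspace. Since $\lambda\neq 0$, the eigenvalues $\lambda$ and $-\lambda$ are distinct, so $v_{+}$ and $v_{-}$ are linearly independent and $\Span\{v_{+},v_{-}\}$ is two-dimensional. An intersection of dimension~$2$ would therefore mean that both $v_{+}$ and $v_{-}$ lie in $\mathrm{K}_{m}(R_\sigma^{-1},q_1)$. Invoking the $J$-neutrality of this Krylov subspace (\cite[Proposition~3.3]{Mehrmann2001}) for the pair $(v_{+},v_{-})$ then forces $v_{+}^{\top} J v_{-} = 0$, so the entire proof reduces to showing that $v_{+}^{\top} J v_{-} \neq 0$.

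The non-vanishing of $v_{+}^{\top} J v_{-}$ follows from the Hamiltonian structure of $H$ combined with simplicity. Starting from $(JH)^{\top} = JH$ and $J^{\top} = -J$, I would rewrite the identity as $H^{\top} J = -JH$ and apply it to $H v_{+} = \lambda v_{+}$, obtaining $H^{\top}(Jv_{+}) = -\lambda (Jv_{+})$. Hence $Jv_{+}$ is a left eigenvector of $H$ associated with $-\lambda$, which is the finite-dimensional counterpart of \Cref{proposition:hamiltonian_symmetry}. Because $\lambda$ is simple and the Hamiltonian spectrum is symmetric about the origin, $-\lambda$ is also a simple eigenvalue of $H$. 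The standard fact that left and right eigenvectors of a matrix at a simple eigenvalue are not mutually orthogonal then yields $(Jv_{+})^{\top} v_{-} \neq 0$, and using $(Jv_{+})^{\top} = -v_{+}^{\top} J$ this contradicts $v_{+}^{\top} J v_{-} = 0$, establishing the claim.

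The main obstacle I anticipate is purely bookkeeping rather than conceptual: one must keep the sign conventions relating $J$, $J^{\top}$, and the transposition-based definition of a left eigenvector consistent with those used in the paper (recall that here a left eigenvector satisfies $w^{\top} M(\lambda) = 0$, which matches the transposition used above). All the heavy lifting is done by two off-the-shelf ingredients -- the $J$-neutrality of Krylov subspaces generated by $R_\sigma^{-1}$ and the non-degeneracy of the left/right eigenvector pairing at a simple eigenvalue -- so no new estimates or constructions are needed.
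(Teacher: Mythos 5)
Your proposal is correct and follows essentially the same route as the paper: both proofs reduce the claim to showing $v_{+}^{\top}Jv_{-}\neq 0$ via the observation that $Jv_{+}$ is a left eigenvector of $H$ at the simple eigenvalue $-\lambda$ (the paper cites \Cref{proposition:hamiltonian_symmetry} for this, while you rederive it from $H^{\top}J=-JH$), and then conclude by contradiction with the $J$-neutrality of $\mathrm{K}_m(R_{\sigma}^{-1},q_1)$. The sign bookkeeping you flag works out exactly as you describe, so no gap remains.
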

\begin{proof}
	The result follows from a similar argument as in \cite[Lemma~2.3]{Mehrmann2012}, which is repeated here to ease the derivations for the infinite-dimensional case. It follows from \Cref{proposition:hamiltonian_symmetry} that if $(\lambda,v_{+})$ is a right eigenpair of $H$, then $(-\lambda,w_{-})$ with $w_{-} := Jv_{+}$ is a left eigenpair of $H$. This implies that $v_{-}^{\top}Jv_{+} = v_{-}^{\top}w_{-} \neq 0$, in which the last inequality follows from the fact that $-\lambda$ is a simple eigenvalue of $H$. Thus if the intersection of $\Span\{v_{+},v_{-}\}$ and $\mathrm{K}_m\left(R_{\sigma}^{-1},q_1\right)$ would have dimension 2, this would imply that both $v_{+}$ and $v_{-}$ are in $\mathrm{K}_m\left(R_{\sigma}^{-1},q_1\right)$, which would contradict the  $J$-neutrality of the constructed Krylov subspace.
\end{proof}
This proposition has an important effect on the obtained Ritz values as demonstrated below. For sake of simplicity, assume that all eigenvalues of $H$ are simple and different from zero. In this case, all eigenvalues of $R_{\sigma}^{-1}$ have multiplicity two and the right eigenspace corresponding to the eigenvalue $\mu$ is spanned by $v_{+}$ and $v_{-}$, the right eigenvectors of $H$ associated with $\lambda$ and $-\lambda$, respectively. Now let us introduce the matrix $Q_m = \begin{bmatrix}
q_1 & \dots & q_{m}
\end{bmatrix}$, with $m\leq n$, whose columns $q_1, \dots, q_{m} \in \R^{2n}$ form an orthonormal basis for $\mathrm{K}_m\left(R_{\sigma}^{-1},q_1\right)$. Using this notation, the Arnoldi recurrence relation can be written as
\begin{equation}
\label{eq:arnoldi_recurrence}
R_{\sigma}^{-1} Q_m = Q_m \Psi_m + \Psi_{[m+1,m]} q_{m+1} e_m^{\top} 
\end{equation}
with $\Psi_m \in \R^{m\times m}$ a reduced Hessenberg matrix, $\Psi_{[m+1,m]}$ a real-valued scalar, $q_{m+1} \in \R^{2n}$ and $e_m\in\R^{2n}$ the $m$\textsuperscript{th} Euclidean basis vector. When the Arnoldi procedure breaks down, \ie{} $\Psi_{[m+1,m]} = 0$, let $\Psi_m V_m = V_m \Sigma_m$ be an eigenvalue decomposition of $\Psi_m$ with $\Sigma_m = \diag(s_1,\dots,s_m)$ a diagonal matrix containing the eigenvalues of $\Psi_m$ (the so called Ritz values) and $V_m\in\C^{m\times m}$ a matrix containing the associated right eigenvectors. It then follows from \eqref{eq:arnoldi_recurrence} that  $R_{\sigma}^{-1} (Q_m V_m) = (Q_m V_m) \Sigma_m$, meaning that $s_1,\dots,s_m$ are also eigenvalues of $R_{\sigma}^{-1}$ with as corresponding right eigenvectors the columns of $Q_m V_m$. However, although each eigenvalue of $R_{\sigma}^{-1}$ has multiplicity two, there appear no doubles in $s_1,\dots,s_m$ as this would imply that there exist two linear independent vectors in $\Span\{v_{+},v_{-}\}$ that lie in the column space of $Q_m$ what would contradict \Cref{proposition:dimension_intersection}.

Thus, if the eigenvalues $\pm\jmath\omega$ lie sufficiently close to the shifts $\pm\sigma$, it follows from the reasoning above and the convergence behavior of the Arnoldi method that for sufficiently large $m$ the corresponding real eigenvalue $\mu = \frac{1}{-\omega^2-\sigma^2}$ of $R_{\sigma}^{-1}$ will be approximated by a \textbf{real and simple Ritz value} of the \textbf{real-valued} Hessenberg matrix $\Psi_m$. Thus even when the eigenvalues of $\Psi_m$ are computed in finite precision, the approximations for $\pm\jmath\omega$ obtained via the transformation $\pm\sqrt{\frac{1}{s}+\sigma^2}$, with $s$ the computed eigenvalues of $\Psi_m$, are typically strictly imaginary.

Note however that to ensure that only one component of the space $\Span\{v_{+},v_{-}\}$ appears in the Krylov subspace, it is important that the Krylov subspace remains $J$-neutral. Although this property is satisfied by construction when working in exact arithmetic, $J$-neutrality is typically lost when working in finite precision. \cite[Section~5]{Mehrmann2001} therefore suggests to let the orthogonalization procedure be based on the following Arnoldi recurrence relation, 
\begin{equation}
\label{eq:arnoldi_recurrence_2}
\Psi_{[m+1,m]} q_{m+1} = R_{\sigma}^{-1} q_m - Q_m \Psi_{[:,m]} - J Q_m \varUpsilon_{[:,m]}, 
\end{equation}
with $\Psi_{[:,m]} = Q_m^{\top} R_{\sigma}^{-1} q_m$, $\varUpsilon_{[:,m]}  = (J Q_m)^{\top} R_{\sigma}^{-1} q_m$ and $\Psi_{[m+1,m]}$ a normalisation factor such that $\|q_{m+1}\|_2 = 1$, \ie{} orthogonalise the new basis vector not only with respect to $Q_m$ but also to $JQ_m$. Recall that the last term in \eqref{eq:arnoldi_recurrence_2} is zero when working in exact arithmetic.

\begin{remark}
	Recall that $\sigma$ was assumed either purely real or purely imaginary. However, similar results for a more general shift $\sigma$ can be obtained by choosing $R_{\sigma}^{-1} = \Big((H-\sigma I_{2n})(H+\sigma I_{2n})(H-\bar{\sigma} I_{2n})(H+\bar{\sigma} I_{2n})\Big)^{-1}$, see \cite[Equation~3.2]{Mehrmann2012}. 
\end{remark}

\subsection{Two equivalent eigenvalue problems on an infinite-dimensional space} 
\label{subsec:infinite_dimensional}
In this subsection we examine the relation between the NLEVP \eqref{eq:nlevp} and two linear but infinite-dimensional eigenvalue problems. To this end, consider the space of continuous functions that map the interval $\left[-\tau_K,\tau_K\right]$ to $\C^{2n}$, denoted by $X:= C\big(\left[-\tau_K,\tau_K\right],\C^{2n}\big)$, and define the linear operator ${\operator:D(\operator)\subseteq X \mapsto X}$ as
\begin{equation}
\label{eq:definition_operator}
\operator\varphi(\theta) := \varphi'(\theta) \text{ for } \theta \in \left[-\tau_K,\tau_K\right],
\end{equation} 
in which the domain of this operator, $D(\operator)$, consists of the functions $\varphi$ in $X$ that are continuously differentiable and that fulfill the condition
\begin{equation}
\label{eq:integration_condition}
\varphi'(0) = H_0\varphi(0) + \sum_{k=1}^{K} \Big(H_{-k} \varphi(-\tau_k)+H_k \varphi(\tau_k)\Big),
\end{equation}
or in other words
\[
D(\operator) := \left\{ \varphi\in X: \varphi'\in X \  \& \ \varphi \ \text{satisfies} \ \eqref{eq:integration_condition}\right\}.
\]
It can be shown that the operator $\operator$ only features a point spectrum. Furthermore, a complex number $\lambda$ is an eigenvalue of this operator if (and only if) there exists a non-trivial function $\varphi \in D(\operator)$ such that
\begin{equation*}
\label{eq:infinite_dimensional_hamiltonian_evp}
\operator\varphi = \lambda \varphi .
\end{equation*}
This function $\varphi$ is called the eigenfunction of $\operator$ associated with the eigenvalue $\lambda$. From \eqref{eq:definition_operator}, it is clear that these eigenfunctions must have the form $ve^{\lambda \cdot}$ with $v\in\C^{2n}$. By plugging this result into \eqref{eq:integration_condition} it is evident that there exists a correspondence between a right eigenpair $(\lambda,v)$ of \eqref{eq:nlevp} and an eigenpair $(\lambda,\varphi)$ of $\operator$. This relation is stated more rigorously in the following proposition.
\begin{proposition}
	\label{proposition:equivalence_infinite_nonlinear}
	It holds that
	\begin{enumerate}
		\item if $(\lambda,v)$ is a right eigenpair of the NLEVP associated with \eqref{eq:characteristic_matrix}, then $(\lambda, v e^{\lambda \cdot})$ is eigenpair of $\operator$, and
		\item if $(\lambda,\varphi)$ is an eigenpair of $\operator$, then the eigenfunction $\varphi$ is of the form $ve^{\lambda \cdot}$ with ($\lambda$,$v$) a right eigenpair of the NLEVP associated with \eqref{eq:characteristic_matrix}.
	\end{enumerate}
\end{proposition}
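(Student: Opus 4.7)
The proof would be a direct verification in both directions, hinging on two observations: that the only solutions of $\varphi' = \lambda \varphi$ on the interval $[-\tau_K, \tau_K]$ are exponentials of the form $v e^{\lambda \cdot}$, and that the boundary condition \eqref{eq:integration_condition} on $D(\operator)$ is, when restricted to such an exponential ansatz, exactly the equation $M(\lambda) v = 0$.

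For item 1, I would fix a right eigenpair $(\lambda, v)$ of the NLEVP and define the candidate eigenfunction $\varphi(\theta) := v e^{\lambda \theta}$. Continuous differentiability on $[-\tau_K, \tau_K]$ is immediate, and $\varphi'(\theta) = \lambda v e^{\lambda \theta} = \lambda \varphi(\theta)$ gives $\operator \varphi = \lambda \varphi$ pointwise, provided we can show $\varphi \in D(\operator)$. Evaluating the boundary condition \eqref{eq:integration_condition} at this $\varphi$ reduces the left-hand side to $\lambda v$ and the right-hand side to $\bigl(H_0 + \sum_{k=1}^K (H_{-k} e^{-\lambda \tau_k} + H_k e^{\lambda \tau_k})\bigr) v$, so that the condition is equivalent to $M(\lambda) v = 0$, which holds by hypothesis. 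Hence $\varphi \in D(\operator)$ and $(\lambda, \varphi)$ is an eigenpair of $\operator$.

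For item 2, I would start from $(\lambda, \varphi)$ an eigenpair of $\operator$, so that $\varphi \in D(\operator)$ is non-trivial and $\varphi'(\theta) = \lambda \varphi(\theta)$ for all $\theta \in [-\tau_K, \tau_K]$. This is a linear first-order ODE whose general solution is $\varphi(\theta) = v e^{\lambda \theta}$ with $v := \varphi(0) \in \C^{2n}$; the function is non-trivial precisely when $v \neq 0$. It remains to show that $(\lambda, v)$ solves the NLEVP. For this I substitute the exponential form into \eqref{eq:integration_condition} and observe, as in item 1, that it collapses to
\begin{equation*}
\lambda v = H_0 v + \sum_{k=1}^{K}\bigl(H_{-k} e^{-\lambda \tau_k} + H_k e^{\lambda \tau_k}\bigr) v,
\end{equation*}
which is $M(\lambda) v = 0$.

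There is no real obstacle here: both directions amount to matching the boundary condition defining $D(\operator)$ with the characteristic equation. The only thing that requires a brief justification is the uniqueness-of-solutions step for the ODE $\varphi' = \lambda \varphi$ in item 2, but this is a standard consequence of the initial-value problem being uniquely determined by $\varphi(0)$.
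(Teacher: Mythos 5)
Your proof is correct and is exactly the standard direct-verification argument: the paper itself does not spell out a proof but delegates to \cite[Proposition~2.2]{Michiels2010}, and what you wrote is precisely that argument (exponential ansatz in one direction, uniqueness of solutions of $\varphi'=\lambda\varphi$ in the other, with the boundary condition \eqref{eq:integration_condition} collapsing to $M(\lambda)v=0$ in both). No gaps; nothing further is needed.
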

\begin{proof}
	The proposition follows from a similar argument as  \cite[Proposition~2.2]{Michiels2010}.
\end{proof}

As seen in \Cref{proposition:equivalence_infinite_nonlinear}, the right eigenpairs of the considered NLEVP are connected with the eigenpairs of the operator $\operator$. Now we will look for an operator which has a similar connection with the left eigenpairs of the considered NLEVP. To this end, let us introduce another operator $\adjointoperator: D(\adjointoperator)\subseteq X\mapsto X$:
\[
\adjointoperator \psi(\theta) := - \psi'(\theta) \text{ for } \theta \in \left[-\tau_K,\tau_K\right],
\]
with
\[
D(\adjointoperator) := \left\{\psi \in X: \psi'\in X \,\&\, -\psi'(0) = H_0^{\top} \psi(0) + \textstyle\sum\limits_{k=1}^{K} \Big(H_k^{\top}\psi(-\tau_k) + H_{-k}^{\top} \psi(\tau_k)\Big) \right\}.
\]
The infinite-dimensional eigenvalue problem associated with this operator has the following relation with the left eigenpairs of the considered NLEVP.
\begin{proposition} It holds that,
	\begin{enumerate}
		\item if $(\lambda, w)$ is a left eigenpair of the NLEVP associated with \eqref{eq:characteristic_matrix}, then\linebreak $(\lambda, w e^{-\lambda \cdot})$ is an eigenpair of $\adjointoperator$.
		\item if $(\lambda,\psi)$ is an eigenpair of $\adjointoperator$, then the eigenfunction $\psi$ has the form $w e^{-\lambda \cdot}$ with $(\lambda,w)$ a left eigenpair of the NLEVP associated with \eqref{eq:characteristic_matrix}.
	\end{enumerate}
\end{proposition}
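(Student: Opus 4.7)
The plan is to mirror the proof of \Cref{proposition:equivalence_infinite_nonlinear}, exploiting the fact that $\adjointoperator$ is defined with the opposite sign in both the differential action ($-\psi'$ instead of $\varphi'$) and the exponential ansatz ($e^{-\lambda\cdot}$ instead of $e^{\lambda\cdot}$), so that these two sign changes compensate. The computation splits naturally into the ODE part (showing that the eigenvalue relation $\adjointoperator\psi = \lambda\psi$ forces $\psi$ to be exponential) and the boundary part (showing that the condition encoded in $D(\adjointoperator)$ is exactly the transposed characteristic equation).

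For statement~(1), I would start with the definition of a left eigenpair, $w^{\top} M(\lambda) = 0$, transpose it to obtain
\[
\lambda w \;=\; H_0^{\top} w + \sum_{k=1}^{K}\Bigl( H_{-k}^{\top} e^{-\lambda \tau_k} w + H_k^{\top} e^{\lambda \tau_k} w \Bigr),
\]
and then set $\psi(\theta):= w e^{-\lambda \theta}$. A one-line computation gives $-\psi'(\theta)=\lambda w e^{-\lambda \theta}=\lambda\psi(\theta)$, which establishes the eigenvalue equation once membership in $D(\adjointoperator)$ is verified. For the latter, I evaluate $\psi$ at the relevant points, $\psi(0)=w$, $\psi(-\tau_k)=w e^{\lambda \tau_k}$ and $\psi(\tau_k)=w e^{-\lambda \tau_k}$, plug into the defining constraint of $D(\adjointoperator)$, and observe that the resulting identity coincides with the transposed characteristic equation displayed above.

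For statement~(2), the ODE $\adjointoperator\psi = \lambda\psi$ reads $\psi'(\theta) = -\lambda \psi(\theta)$; integrating component-wise on $[-\tau_K,\tau_K]$ forces $\psi(\theta) = w e^{-\lambda\theta}$ for some $w\in\C^{2n}$, and non-triviality of the eigenfunction implies $w\neq 0$. Substituting this form into the boundary condition defining $D(\adjointoperator)$ and factoring out the exponentials reproduces, after one transposition, precisely $w^{\top} M(\lambda) = 0$, which identifies $(\lambda,w)$ as a left eigenpair in the sense defined at the start of \Cref{sec:preliminary_results}.

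The main obstacle, or rather the only subtle point, is the sign bookkeeping: the minus sign in $\adjointoperator$, the minus sign in $e^{-\lambda\cdot}$ and the swap between $H_k^{\top}$ and $H_{-k}^{\top}$ in $D(\adjointoperator)$ must conspire so that the evaluation at the shifted arguments $\pm\tau_k$ reproduces the transposed characteristic equation rather than some twisted variant of it. Carrying out this verification carefully — in particular matching $\psi(-\tau_k)=w e^{\lambda\tau_k}$ with the coefficient $H_k^{\top}$ and $\psi(\tau_k)=w e^{-\lambda\tau_k}$ with $H_{-k}^{\top}$ — is the one step that must be done without shortcuts. No further technical machinery (such as properties of the point spectrum of $\adjointoperator$) is needed, since the proposition only asserts the equivalence at the level of eigenpairs.
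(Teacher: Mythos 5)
Your proposal is correct and is essentially the argument the paper intends: the paper's own proof merely defers to the analogous direct verification in \cite[Proposition~2.2]{Michiels2010}, and what you write out — transposing $w^{\top}M(\lambda)=0$, checking that $\psi=we^{-\lambda\cdot}$ satisfies both the eigenvalue equation and the boundary condition in $D(\adjointoperator)$, and reversing the steps for the converse — is precisely that verification, with the sign bookkeeping (matching $\psi(-\tau_k)=we^{\lambda\tau_k}$ to $H_k^{\top}$ and $\psi(\tau_k)=we^{-\lambda\tau_k}$ to $H_{-k}^{\top}$) done correctly.
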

\begin{proof}
	As before, the assertions follow from a similar argument as in \cite[Proposition~2.2]{Michiels2010}.
\end{proof}

Combined with  \Cref{proposition:hamiltonian_symmetry}, these results imply that the following relation between the eigenpairs of $\operator$ and those of $\adjointoperator$ holds.
\begin{corollary}
	If $\big(\lambda,\varphi\big)$ is an  eigenpair of $\operator$, then \mbox{$\big(-\lambda,\psi\big)$}, with $\psi(\theta) = J\varphi(\theta)$ for $\theta \in [-\tau_K,\tau_K]$, is an eigenpair of $\adjointoperator$ and visa versa.
\end{corollary}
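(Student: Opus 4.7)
The plan is to prove the forward implication by chaining together the two preceding propositions (the operator-NLEVP correspondences for $\operator$ and $\adjointoperator$) via \Cref{proposition:hamiltonian_symmetry}, and then to observe that the same chain runs backwards to yield the converse. Concretely, given an eigenpair $(\lambda,\varphi)$ of $\operator$, the first proposition of \Cref{subsec:infinite_dimensional} forces $\varphi(\theta)=v e^{\lambda\theta}$ for some right eigenpair $(\lambda,v)$ of the NLEVP. \Cref{proposition:hamiltonian_symmetry} then converts this into a \emph{left} eigenpair $(-\lambda,Jv)$, and the second proposition lifts this back to an eigenpair $(-\lambda,(Jv)e^{-(-\lambda)\theta})=(-\lambda,(Jv)e^{\lambda\theta})$ of $\adjointoperator$. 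Since $\psi(\theta):=J\varphi(\theta)=Jv\,e^{\lambda\theta}$, this is precisely the eigenfunction claimed.

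For the converse, I would run the same chain in reverse: an eigenpair $(-\lambda,\psi)$ of $\adjointoperator$ has $\psi(\theta)=w e^{\lambda\theta}$ with $(-\lambda,w)$ a left NLEVP eigenpair; \Cref{proposition:hamiltonian_symmetry} produces the right eigenpair $(\lambda,Jw)$; and the first proposition yields $(\lambda,(Jw)e^{\lambda\theta})$ as an eigenpair of $\operator$. To reconcile this with $\varphi=J^{-1}\psi$, I would use $J^{-1}=-J$ (since $J^2=-I_{2n}$) to conclude $\varphi(\theta)=-Jw\,e^{\lambda\theta}$, which is still a right eigenfunction since eigenfunctions are defined only up to a nonzero scalar.

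There is no serious obstacle; the only care needed is bookkeeping around signs, namely the minus sign in the definition of $\adjointoperator$, the sign convention $\psi=we^{-\lambda\,\cdot}$ in the $\adjointoperator$ correspondence, and the identity $J^{-1}=-J$. As a cleaner alternative I could bypass the NLEVP entirely and verify the two required properties directly on functions: the eigenvalue relation is immediate from $\adjointoperator(J\varphi)=-(J\varphi)'=-J\varphi'=-\lambda\,J\varphi$, and membership $J\varphi\in D(\adjointoperator)$ reduces, after left-multiplying the boundary condition \eqref{eq:integration_condition} by $-J$, to the matrix identities $H_0^{\top}J=-JH_0$, $H_k^{\top}J=-JH_{-k}$, and $H_{-k}^{\top}J=-JH_k$. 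Each of these follows from \Cref{assumption:hamiltonian_structure} together with $J^{\top}=-J$. I expect this direct route, supplemented by a one-line justification via the proposition chain, to give the most transparent presentation.
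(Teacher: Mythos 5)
Your primary argument --- chaining the $\operator$--NLEVP correspondence, \Cref{proposition:hamiltonian_symmetry}, and the $\adjointoperator$--NLEVP correspondence --- is exactly the route the paper intends (the corollary is stated as an immediate consequence of precisely these three results, with no further proof given), and your sign bookkeeping, including $e^{-(-\lambda)\theta}=e^{\lambda\theta}$ and $J^{-1}=-J$ in the converse, is correct. The supplementary direct verification on functions is also sound and adds transparency, but it does not change the fact that your approach coincides with the paper's.
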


Furthermore, let us introduce the bilinear form $\bilinear{\cdot}{\cdot}: X \times X \mapsto \C$ with
\begin{multline}
\label{eq:bilinear_form}
\bilinear{\varphi}{\psi} := \psi(0)^{\top} \varphi(0)+ \\ \sum_{k=1}^{K} \left(\int_0^{\tau_k} \psi(\theta)^{\top} H_{-k} \varphi(\theta-\tau_k) \dd\theta - \int_0^{\tau_k} \psi(\theta-\tau_k)^{\top} H_k \varphi(\theta) \dd\theta \right).
\end{multline} 
Notice that this bilinear form does not define an inner product as it is neither Hermetian symmetric nor  positive definite. However, this bilinear form does induce two important relations between the operators $\operator$ and  $\adjointoperator$. Firstly, borrowing terminology from \cite[Chapters 7]{Hale1977}, $\adjointoperator$ can be seen as the \emph{formal adjoint} of $\operator$ with respect to the bilinear form \eqref{eq:bilinear_form}, since the following result holds.

\begin{proposition}
	For $\varphi\in D(\operator)$ and $\psi \in D(\adjointoperator)$ the equality \[ \bilinear{ \operator\varphi}{\psi} = \bilinear{\varphi} {\adjointoperator\psi}\] holds.
\end{proposition}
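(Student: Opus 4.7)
The plan is a direct calculation: substitute $\operator\varphi=\varphi'$ into the left-hand side, apply integration by parts to each of the integrals in the definition of $\bilinear{\cdot}{\cdot}$, and then absorb the resulting boundary terms using the two defining conditions of $D(\operator)$ and $D(\adjointoperator)$.

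More concretely, I would first write
\[
\bilinear{\operator\varphi}{\psi} = \psi(0)^{\top}\varphi'(0) + \sum_{k=1}^{K}\left(\int_0^{\tau_k}\psi(\theta)^{\top} H_{-k}\varphi'(\theta-\tau_k)\dd\theta - \int_0^{\tau_k}\psi(\theta-\tau_k)^{\top} H_k\varphi'(\theta)\dd\theta\right),
\]
and perform integration by parts on each of the $2K$ integrals. This produces boundary contributions at $\theta=0$ and $\theta=\tau_k$ together with integrals in which the derivative has been moved from $\varphi$ to $\psi$, with a sign change. The integrals with the derivative shifted are exactly (up to the minus sign) the integrals appearing in $\bilinear{\varphi}{-\psi'}=\bilinear{\varphi}{\adjointoperator\psi}$, so the task reduces to showing that all the boundary terms sum to $-\psi'(0)^{\top}\varphi(0)$.

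Collecting the boundary terms one finds two groups. The first group pairs $\psi(0)^{\top}\varphi'(0)$ with the contributions at $\theta=0$ from both integrals, giving $\psi(0)^{\top}\bigl(\varphi'(0)-\sum_{k=1}^{K}(H_{-k}\varphi(-\tau_k)+H_k\varphi(\tau_k))\bigr)$; by the condition \eqref{eq:integration_condition} defining $D(\operator)$ this collapses to $\psi(0)^{\top} H_0\varphi(0)$. The second group collects the contributions at $\theta=\tau_k$, namely $\sum_{k=1}^K\bigl(\psi(\tau_k)^{\top} H_{-k}+\psi(-\tau_k)^{\top} H_k\bigr)\varphi(0)$; transposing inside each term and using the defining condition of $D(\adjointoperator)$ turns this into $\bigl(-\psi'(0)-H_0^{\top}\psi(0)\bigr)^{\top}\varphi(0) = -\psi'(0)^{\top}\varphi(0)-\psi(0)^{\top} H_0\varphi(0)$. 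Adding the two groups, the $\psi(0)^{\top} H_0\varphi(0)$ contributions cancel and only $-\psi'(0)^{\top}\varphi(0)$ remains, which is exactly the first term of $\bilinear{\varphi}{\adjointoperator\psi}$.

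I expect no serious obstacle: the argument is essentially bookkeeping, but some care is needed in two places. First, the indices of $H_k$ and $H_{-k}$ swap under transposition inside the boundary sum, and one must check that this swap is precisely what makes the boundary terms match the adjoint boundary condition on $\psi$. Second, one has to be careful that the integration-by-parts formula is valid: the integrands involve $\psi(\theta-\tau_k)$ and $\varphi(\theta-\tau_k)$ on $[0,\tau_k]$, so the shifted functions are evaluated on $[-\tau_k,0]\subseteq[-\tau_K,\tau_K]$, where by hypothesis $\varphi,\psi\in C^1$. Once these checks are in place the equality $\bilinear{\operator\varphi}{\psi}=\bilinear{\varphi}{\adjointoperator\psi}$ follows.
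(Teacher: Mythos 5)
Your proof is correct and takes essentially the same approach as the paper: the paper starts from $\bilinear{\varphi}{\adjointoperator\psi}$ and integrates by parts to move the derivative onto $\varphi$, invoking the defining conditions of $D(\operator)$ and $D(\adjointoperator)$ to handle the boundary terms, while you run the identical computation in the opposite direction starting from $\bilinear{\operator\varphi}{\psi}$. Your bookkeeping of the boundary contributions (the cancellation of the $\psi(0)^{\top}H_0\varphi(0)$ terms leaving $-\psi'(0)^{\top}\varphi(0)$) checks out.
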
 
\begin{proof} Using the definition of both operators and partial integration, we find that
	\begin{align*}
	\bilinear{\varphi}{\adjointoperator\psi}
	=& \, \begin{multlined}[t] \textstyle
	-\psi'(0)^{\top}\varphi(0) - \sum\limits_{k=1}^{K} \left(\int\limits_{0}^{\tau_k} \psi'(\theta)^{\top} H_{-k} \varphi(\theta-\tau_k) \dd\theta  - \qquad \quad\right. \\  \left. \textstyle \int\limits_0^{\tau_k} \psi'(\theta-\tau_k)^{\top} H_k \varphi(\theta) \dd\theta \right)
	\end{multlined} \\
	=& \,\psi(0)^{\top} \underbrace{\bigg(H_0 \varphi(0) + \textstyle\sum\limits_{k=1}^{K} \big(H_{-k} \varphi(-\tau_k) + H_k \varphi(\tau_k)\big)\bigg)}_{\varphi'(0)} + \\ & \qquad\qquad \textstyle \sum\limits_{k=1}^{K} \Big( \int\limits_{0}^{\tau_k} \psi(\theta)^{\top} H_{-k} \varphi'(\theta-\tau_k) \dd\theta 
	 - \int\limits_0^{\tau_k} \psi(\theta-\tau_k)^{\top} H_k \varphi'(\theta) \dd\theta \Big) \\
	=&\, \bilinear{\operator\varphi}{\psi}.
	\end{align*}
\end{proof}
Secondly, under the bilinear form \eqref{eq:bilinear_form} the eigenfunctions of $\adjointoperator$ are complementary to the eigenfunctions of $\operator$, as spelled out in the following proposition.
\begin{proposition}
	If $\varphi_{\lambda}$ is an eigenfunction of $\operator$ associated with an eigenvalue $\lambda$ and if $\psi_{\mu}$ is an eigenfunction of $\adjointoperator$ associated with a different eigenvalue $\mu\neq\lambda$ then $\bilinear{\varphi_{\lambda}}{\psi_{\mu}} = 0$.
\end{proposition}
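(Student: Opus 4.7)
The plan is to exploit the formal adjoint relation established in the previous proposition: $\bilinear{\operator\varphi}{\psi} = \bilinear{\varphi}{\adjointoperator\psi}$ for $\varphi \in D(\operator)$ and $\psi \in D(\adjointoperator)$. This is the standard trick for proving that eigenfunctions associated with different eigenvalues of an operator and its adjoint are biorthogonal.

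First I would verify that the hypotheses put $\varphi_\lambda$ in $D(\operator)$ and $\psi_\mu$ in $D(\adjointoperator)$, which is immediate since they are, by assumption, eigenfunctions of the respective operators. Next I would apply the adjoint identity to the pair $(\varphi_\lambda, \psi_\mu)$, giving
\[
\bilinear{\operator \varphi_\lambda}{\psi_\mu} = \bilinear{\varphi_\lambda}{\adjointoperator \psi_\mu}.
\]
Substituting the eigenvalue relations $\operator \varphi_\lambda = \lambda \varphi_\lambda$ and $\adjointoperator \psi_\mu = \mu \psi_\mu$, and pulling the scalars out by bilinearity of $\bilinear{\cdot}{\cdot}$, yields
\[
\lambda\, \bilinear{\varphi_\lambda}{\psi_\mu} = \mu\, \bilinear{\varphi_\lambda}{\psi_\mu},
\]
so that $(\lambda - \mu)\,\bilinear{\varphi_\lambda}{\psi_\mu} = 0$. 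Since $\lambda \neq \mu$ by hypothesis, the desired conclusion $\bilinear{\varphi_\lambda}{\psi_\mu} = 0$ follows.

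There is essentially no obstacle here: the work has already been done in proving that $\adjointoperator$ is the formal adjoint of $\operator$ under the bilinear form $\bilinear{\cdot}{\cdot}$, and the bilinearity of $\bilinear{\cdot}{\cdot}$ over $\C$ is evident from its definition in \eqref{eq:bilinear_form}. The only minor point worth explicitly noting in the write-up is that the bilinear form is indeed linear (not conjugate-linear) in both arguments, so that scalar eigenvalues factor out cleanly without taking complex conjugates; this is exactly why the definition of left eigenvector used at the beginning of \Cref{sec:preliminary_results} was chosen with transpose rather than conjugate transpose.
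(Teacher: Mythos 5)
Your argument is correct and is exactly the paper's proof: both apply the formal adjoint identity $\bilinear{\operator\varphi_{\lambda}}{\psi_{\mu}} = \bilinear{\varphi_{\lambda}}{\adjointoperator\psi_{\mu}}$, substitute the eigenvalue relations, factor out $(\lambda-\mu)$ by bilinearity, and conclude from $\lambda\neq\mu$. Your added remark that the form is genuinely bilinear (not sesquilinear), so scalars factor out without conjugation, is a nice clarification consistent with the paper's conventions.
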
 
\begin{proof} It follows from the result above that $\bilinear{\operator\varphi_{\lambda}}{\psi_{\mu}} = \bilinear{\varphi_{\lambda}}{\adjointoperator \psi_{\mu}}$. Using the properties of a bilinear form one finds that
	\[
	\begin{aligned}
	0 &= \bilinear{\operator \varphi_{\lambda}}{\psi_{\mu}} - \bilinear{\varphi_{\lambda}}{\adjointoperator \psi_{\mu}} \\
	&=\bilinear{\lambda \varphi_{\lambda}}{\psi_{\mu}} - \bilinear{\varphi_{\lambda}}{\mu \psi_{\mu}} \\
	&= (\lambda-\mu)\,\bilinear{\varphi_{\lambda}}{\psi_{\mu}}.
	\end{aligned}
	\]
	As $\lambda - \mu \neq 0$, $\bilinear{\varphi_{\lambda}}{\psi_{\mu}}$ must equal zero.
\end{proof}
\section{The structure preserving shift-invert infinite Arnoldi method for Hamiltonian delay eigenvalue problems}
\label{sec:structure_preserving_delay}
In the previous section we saw that the eigenvalues of the NLEVP \eqref{eq:nlevp} correspond to those of the linear but infinite-dimensional operator $\operator$. To compute the eigenvalues of this operator in the neighborhood of some purely real or imaginary shift $\sigma$, an extension of the infinite Arnoldi method for RDEVPs, proposed in \cite{Jarlebring2010}, will be introduced in this section. The presented method differs from the traditional shift-invert infinite Arnoldi method in that it preserves the special Hamiltonian structure of the spectrum. To this end, as was the case in \Cref{subsec:finite_dimensional_case}, we  first have to derive an adequate shift-invert transformation.  Inspired by \eqref{eq:shift_invert_matrix}, lets us consider the following linear but infinite-dimensional operator:
\begin{equation*}
\mappedoperator := \left(\operator-\sigma\mathcal{I}_X\right)\left(\operator+\sigma\mathcal{I}_X\right),
\end{equation*}
with $\mathcal{I}_X$ the identity operator on $X$.
By plugging in the definition of $\operator$ this implies that
\begin{equation*}
\mappedoperator \varphi(\theta) = \varphi''(\theta)-\sigma^2\varphi(\theta) \text{ for } \theta \in \left[-\tau_K,\tau_{K}\right] \text{ and } \varphi \in D(\mappedoperator),
\end{equation*}
with $D(\mappedoperator)$, the domain of $\mappedoperator$, consisting of the functions $\varphi\in X$ which are twice continuously differentiable and which fulfil the following two conditions
\begin{align}
\varphi'(0) &= H_0 \varphi(0) + \textstyle\sum\limits_{k=1}^{K} \big(H_{-k} \varphi(-\tau_k) + H_k \varphi(\tau_k)\big) \text{ and } \label{eq:structure_preserving_operator_D1} \\  \varphi''(0) &= H_0 \varphi'(0) + \textstyle\sum\limits_{k=1}^{K} \big(H_{-k} \varphi'(-\tau_k) + H_{k} \varphi'(\tau_k)\big), \label{eq:structure_preserving_operator_D2}
\end{align}
or in other words \[
D\left(\mappedoperator \right) = \left\{\varphi \in X: \varphi'\in X\ , \ \varphi'' \in X \ \& \ \varphi \text{ satisfies \eqref{eq:structure_preserving_operator_D1} and \eqref{eq:structure_preserving_operator_D2}}\right\}.\]
 If $\sigma$ is not an eigenvalue of $\operator$, then the operator $\mappedoperator$ is invertible.
For $\sigma\neq0$ one finds:
\begin{equation}
\label{eq:solution_inverse_operator}
\mappedoperator^{-1}\phi\,(\theta)  = \textstyle \left(\int\limits_0^{\theta} \frac{\phi(\eta)}{2\sigma} e^{-\sigma \eta} d\eta +\mathrm{C}_{\sigma}[\phi]\right) e^{\sigma \theta} + \left(-\int\limits_0^{\theta} \frac{\phi(\eta)}{2\sigma} e^{\sigma \eta} d\eta + \mathrm{C}_{-\sigma}[\phi]\right) e^{-\sigma \theta},
\end{equation}
in which the constants $\mathrm{C}_{\sigma}[\phi]$ and $\mathrm{C}_{-\sigma}[\phi]$ are uniquely defined by conditions \eqref{eq:structure_preserving_operator_D1} and \eqref{eq:structure_preserving_operator_D2}. After some straight forward manipulations
 we find
\begin{align}
2 \sigma M(\sigma)\ \mathrm{C}_{\sigma}[\phi] &= \negative \phi(0) +  \textstyle \sum\limits_{k=1}^{K} \left[H_k \int\limits_{0}^{\tau_k} \phi(\eta) e^{-\sigma (\eta-\tau_k)} \dd\eta-H_{\negative k} \int\limits_{0}^{\tau_k} \phi(\eta-\tau_k) e^{-\sigma \eta} \dd\eta  \right] \label{eq:const_plus}
\intertext{and} \textstyle
2\sigma M(-\sigma)\ \mathrm{C}_{\negative\sigma}[\phi] &= \phantom{\negative}  \phi(0) - \textstyle \sum\limits_{k=1}^{K} \left[H_k \int\limits_{0}^{\tau_k} \phi(\eta)e^{\sigma (\eta-\tau_k)\phantom{-}} \dd\eta  - H_{\negative k} \int\limits_{0}^{\tau_k} \phi(\eta-\tau_k) e^{\sigma\eta\phantom{-}} \dd\eta \right]\!,\label{eq:const_min}
\end{align}
with $M(\cdot)$ the characteristic matrix as introduced in \eqref{eq:characteristic_matrix}. Note that to determine the integration constants $C_{\sigma}[\phi]$ and $C_{-\sigma}[\phi]$, one has to solve a linear system with both $M(\sigma)$ and $M(-\sigma)$ to preserve the symmetries in the spectrum. For $\sigma = 0$, one finds
\begin{equation}
\label{eq:solution_inverse_operator0}
\mathcal{R}_0^{-1}\phi\,(\theta) = \int_0^{\theta}\int_0^{\eta_2} \phi(\eta_1) \dd \eta_1 \dd \eta_2 + \mathrm{C}_1[\phi] \theta + \mathrm{C}_0[\phi], 
\end{equation}
in which $\mathrm{C}_1[\phi]$ and $\mathrm{C}_0[\phi]$ again follow from \eqref{eq:structure_preserving_operator_D1} and \eqref{eq:structure_preserving_operator_D2}:
\begin{align*}
M(0)\ \mathrm{C}_1[\phi] &= -\phi(0) + \textstyle\sum\limits_{k=1}^{K} \left(H_k \int\limits_0^{\tau_k} \phi(\eta)\dd\eta + H_{-k} \int\limits_{0}^{-\tau_k} \phi(\eta) \dd\eta \right), \\
M(0)\ \mathrm{C}_0[\phi] &= -\mathrm{C}_1[\phi] + \textstyle\sum\limits_{k=1}^{K}\left[H_k\left( \int\limits_{0}^{\tau_k}  \int\limits_{0}^{\eta_2} \phi(\eta_1) \dd\eta_1 \dd\eta_2 + \mathrm{C}_1[\phi]\tau_k\right) \right. +\\ 
&\phantom{=} \textstyle \left. H_{-k}\left( \int\limits_{0}^{-\tau_k}  \int\limits_{0}^{\eta_2} \phi(\eta_1) \dd\eta_1 \dd\eta_2 - \mathrm{C}_1[\phi]\tau_k\right)\right].
\end{align*}

Next, we state some important properties of this operator $\mappedoperator^{-1}$.

\begin{proposition}
	\label{prop:real_valued_Krylov_subspace}
	If $\phi \in X$ is a real-valued function and $\sigma$ is purely real or purely imaginary and not an eigenvalue of $\mathcal{H}$, then $\mappedoperator^{-1}\phi$ is a real function. 
\end{proposition}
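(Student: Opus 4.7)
The plan is to handle the three cases separately: $\sigma$ purely real and nonzero, $\sigma=0$, and $\sigma$ purely imaginary and nonzero, working directly from the explicit formulas \eqref{eq:solution_inverse_operator}--\eqref{eq:solution_inverse_operator0} and the defining equations \eqref{eq:const_plus}--\eqref{eq:const_min} for the integration constants. The cases $\sigma=0$ and $\sigma\in\R\setminus\{0\}$ are essentially immediate: since $\phi$ is real-valued, $\sigma$ is real, and all matrices $H_k$ are real, every expression in \eqref{eq:solution_inverse_operator0} (respectively \eqref{eq:solution_inverse_operator}) reduces to integrals of real functions multiplied by real exponentials. The linear systems that define $C_0[\phi]$, $C_1[\phi]$ (respectively $C_{\sigma}[\phi]$, $C_{-\sigma}[\phi]$) then have real right-hand sides and real, invertible coefficient matrices $M(0)$ (respectively $M(\pm\sigma)$), so the integration constants are real and hence so is $\mathcal{R}_{\sigma}^{-1}\phi$.

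The interesting case is $\sigma = \jmath\omega$ with $\omega\in\R\setminus\{0\}$, since now the formula \eqref{eq:solution_inverse_operator} is genuinely complex in each of its two summands. The key observation is that for $\sigma$ purely imaginary one has $\bar\sigma = -\sigma$ and, because the $H_k$ are real, $\overline{M(\sigma)} = M(-\sigma)$. I would first take the complex conjugate of \eqref{eq:const_plus} using these identities together with $\phi$ and $H_k$ being real; the conjugated right-hand side becomes exactly the right-hand side of \eqref{eq:const_min}, while the left-hand side becomes $-2\sigma M(-\sigma)\overline{C_{\sigma}[\phi]}$. Comparing with \eqref{eq:const_min} and using invertibility of $M(-\sigma)$ (guaranteed since $\pm\sigma$ are not eigenvalues of $\mathcal{H}$) then yields the central symmetry relation $\overline{C_{\sigma}[\phi]} = C_{-\sigma}[\phi]$.

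With this symmetry in hand, I would denote the two bracketed factors in \eqref{eq:solution_inverse_operator} by $T_{\pm}(\theta)$ and observe, again using $\bar\sigma=-\sigma$ and realness of $\phi$, that $\overline{T_{+}(\theta)} = T_{-}(\theta)$. Combined with $\overline{e^{\sigma\theta}} = e^{-\sigma\theta}$, this gives
\[
\overline{\mathcal{R}_{\sigma}^{-1}\phi(\theta)} = T_{-}(\theta) e^{-\sigma\theta} + T_{+}(\theta) e^{\sigma\theta} = \mathcal{R}_{\sigma}^{-1}\phi(\theta),
\]
which is the desired conclusion. The only step requiring genuine care is the bookkeeping in conjugating \eqref{eq:const_plus}--\eqref{eq:const_min}: one must track the sign flips arising from $\bar\sigma=-\sigma$ in the exponentials under the integrals and confirm that the overall sign of the conjugated equation matches \eqref{eq:const_min} rather than its negative. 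Everything else is essentially a routine verification.
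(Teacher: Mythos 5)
Your proof is correct and follows essentially the same route as the paper, whose (very terse) proof likewise just notes that $\mathrm{C}_{\sigma}[\phi]$ and $\mathrm{C}_{-\sigma}[\phi]$ are real when $\sigma$ is real and are complex conjugates of each other when $\sigma$ is purely imaginary. One bookkeeping caveat on the step you yourself flag: conjugating \eqref{eq:const_plus} with $\bar\sigma=-\sigma$ turns the right-hand side into the \emph{negative} of the right-hand side of \eqref{eq:const_min} (not the right-hand side itself), while the left-hand side becomes $-2\sigma M(-\sigma)\overline{\mathrm{C}_{\sigma}[\phi]}$; the two sign flips cancel and you do land on $\overline{\mathrm{C}_{\sigma}[\phi]}=\mathrm{C}_{-\sigma}[\phi]$ as claimed.
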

\begin{proof}
	For $\sigma\neq 0$, this assertion follows directly from \eqref{eq:solution_inverse_operator}, \eqref{eq:const_plus} and \eqref{eq:const_min} by noting that $\mathrm{C}_{\sigma}[\phi]$ and $\mathrm{C}_{-\sigma}[\phi]$ are real if $\sigma$ is real and that $\mathrm{C}_{-\sigma}[\phi]$  is the complex conjugate of $\mathrm{C}_{\sigma}[\phi]$ if $\sigma$ is purely imaginary. For $\sigma=0$ this result follows from \eqref{eq:solution_inverse_operator0} and the expressions for  $C_1[\phi]$ and $C_0[\phi]$ given above.
\end{proof}
\begin{lemma}
	\label{property:skew_hamiltonian}
	For $\sigma$ not an eigenvalue of $\mathcal{H}$, the operator $\mappedoperator^{-1}$ is self-adjoint with respect to $\bilinear{\cdot}{J\cdot}$ meaning that the equality $\bilinear{\mappedoperator^{-1} \varphi}{J\psi} = \bilinear{\varphi}{ J{\mappedoperator}^{-1}\psi}$ holds.
\end{lemma}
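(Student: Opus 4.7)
The plan is to reduce the claim to self-adjointness of $\mappedoperator$ itself, and then exploit the factorisation $\mappedoperator = \operator^2 - \sigma^2\,\mathcal{I}_X$ together with the formal-adjoint relation $\bilinear{\operator\varphi}{\psi} = \bilinear{\varphi}{\adjointoperator\psi}$ already proven in \Cref{subsec:infinite_dimensional}. The bridge between $\operator$ and $\adjointoperator$ needed to make $J$ appear is the intertwining identity
\[
\adjointoperator\, J\psi \;=\; -J\,\operator\psi \quad \text{for every } \psi \in D(\operator),
\]
which promotes the eigenfunction-level corollary $(\lambda,\varphi)\leftrightarrow(-\lambda,J\varphi)$ to a statement about the operators themselves.

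First I would establish this intertwining identity. The computation $(J\psi)'=J\psi'$ is immediate, so the only thing to check is that $J\psi$ lies in $D(\adjointoperator)$, i.e.\ that $-(J\psi)'(0)$ equals $H_0^\top (J\psi)(0) + \sum_k\big(H_k^\top (J\psi)(-\tau_k) + H_{-k}^\top (J\psi)(\tau_k)\big)$. Rewriting $-J\psi'(0)$ using \eqref{eq:integration_condition} for $\psi$, and using the transpose identities $JH_0 = -H_0^\top J$ and $JH_{\pm k} = -H_{\mp k}^\top J$ obtained directly from \eqref{eq:hamiltonian_H0} and \eqref{eq:hamiltonian_Hk} together with $J^\top = -J$, the required equality follows termwise.

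Next I would show $\mappedoperator$ is self-adjoint with respect to $\bilinear{\cdot}{J\cdot}$ on $D(\mappedoperator)$. For $\varphi,\psi \in D(\mappedoperator)$, condition \eqref{eq:structure_preserving_operator_D1} gives $\varphi,\psi\in D(\operator)$ and condition \eqref{eq:structure_preserving_operator_D2} gives $\operator\varphi,\operator\psi\in D(\operator)$, so $\operator^2\varphi$ and $\operator^2\psi$ are well-defined. Applying the formal-adjoint identity twice and the intertwining relation yields
\[
\bilinear{\operator^2\varphi}{J\psi}
=\bilinear{\operator\varphi}{\adjointoperator J\psi}
=-\bilinear{\operator\varphi}{J\operator\psi}
=-\bilinear{\varphi}{\adjointoperator J\operator\psi}
=\bilinear{\varphi}{J\operator^2\psi},
\]
and since $\bilinear{\varphi}{J\psi}$ is manifestly symmetric in the sense required for the $\sigma^2$ term, subtracting $\sigma^2\bilinear{\varphi}{J\psi}$ on both sides gives $\bilinear{\mappedoperator\varphi}{J\psi}=\bilinear{\varphi}{J\mappedoperator\psi}$.

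Finally, I would transfer self-adjointness from $\mappedoperator$ to $\mappedoperator^{-1}$. Given arbitrary $\varphi,\psi\in X$, set $\tilde\varphi := \mappedoperator^{-1}\varphi$ and $\tilde\psi := \mappedoperator^{-1}\psi$; both lie in $D(\mappedoperator)$ by construction, so the previous step applies and yields
\[
\bilinear{\mappedoperator^{-1}\varphi}{J\psi} = \bilinear{\tilde\varphi}{J\mappedoperator\tilde\psi} = \bilinear{\mappedoperator\tilde\varphi}{J\tilde\psi} = \bilinear{\varphi}{J\mappedoperator^{-1}\psi},
\]
which is the claim. The only real obstacle is the bookkeeping of domains: the second boundary condition \eqref{eq:structure_preserving_operator_D2} is precisely what guarantees that $\operator\psi\in D(\operator)$ so that the formal-adjoint identity can legitimately be applied twice and the intertwining relation can be inserted in between. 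The remaining transpose manipulations under Assumption \ref{assumption:hamiltonian_structure} are straightforward.
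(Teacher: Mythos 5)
Your argument is correct, and it takes a genuinely different route from the paper. The paper proves the lemma by a long direct computation: it inserts the explicit integral representation \eqref{eq:solution_inverse_operator} of $\mappedoperator^{-1}$ into the bilinear form, performs repeated interchanges of integration order and changes of variables, and groups the terms involving the integration constants using the identity $J\big(2\sigma M(\sigma)\big)^{-1} = -\big(2\sigma M(-\sigma)\big)^{-\top}J$; the case $\sigma=0$ is only asserted to be similar. You instead work at the operator level: the intertwining identity $\adjointoperator J\psi = -J\operator\psi$ (whose domain verification is exactly the termwise transpose computation under \Cref{assumption:hamiltonian_structure} that you describe, and which upgrades the paper's eigenfunction-level corollary to an operator identity), two applications of the already-established formal-adjoint relation with the sign cancellation from the two factors of $-1$, and a substitution $\tilde\varphi = \mappedoperator^{-1}\varphi$, $\tilde\psi=\mappedoperator^{-1}\psi$ to pass to the inverse. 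Your domain bookkeeping is the right one: \eqref{eq:structure_preserving_operator_D2} is precisely the statement that $\operator\psi\in D(\operator)$, and the surjectivity of $\mappedoperator$ onto $X$ (guaranteed by the explicit solution formula when $\pm\sigma$ are not eigenvalues) is what makes the final substitution legitimate. What your approach buys is brevity, a uniform treatment of $\sigma=0$ and $\sigma\neq 0$, and independence from the explicit resolvent formula; what the paper's computation buys is that the intermediate expressions directly foreshadow the finite-dimensional skew-symmetric matrices $S_{N}$ used in the implementation of the $\bilinear{\cdot}{J\cdot}$-orthogonalisation. One cosmetic remark: the $\sigma^2$ term needs only bilinearity, not any symmetry of $\bilinear{\cdot}{J\cdot}$ (which is in fact anti-symmetric, per Appendix~\ref{appendix:anti_symmetry_bilinear_J_form}), so you may want to rephrase that sentence.
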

\begin{proof}
	Here we restrict ourselves to the case $\sigma\neq 0$ as the proof for $\sigma=0$ is similar. \\
	Using the definition of the bilinear form $\bilinear{\cdot}{\cdot}$ given in \eqref{eq:bilinear_form}, and using \eqref{eq:solution_inverse_operator} we find
	\begin{align*}
		\textstyle
	\bilinear{\mappedoperator^{-1}\varphi}{J\psi} =&   -\Bigg[ \psi(0)^{\top}J\Big(\mathrm{C}_{\sigma}[\varphi] + \mathrm{C}_{-\sigma}[\varphi]\Big) + \\ & 	\textstyle
	\sum\limits_{k=1}^{K} \bigg( \int\limits_{0}^{\tau_k}  \psi(\theta)^{\top} JH_{\negative k}  \bigg[\bigg(\mathrm{C}_{\sigma}[\varphi]+ \int\limits_0^{\theta\negative\tau_k} \frac{\varphi(\eta)}{2\sigma} e^{\negative\sigma \eta} \dd\eta   \bigg) e^{\sigma (\theta\negative\tau_k)} + \\ 	 & \textstyle \qquad\qquad\qquad\qquad\qquad
	\bigg( \mathrm{C}_{\negative\sigma}[\varphi] -\!\!\int\limits\limits_0^{\theta\negative\tau_k} \! \frac{\varphi(\eta)}{2\sigma} e^{\sigma \eta} \dd\eta \bigg) e^{\negative\sigma (\theta\negative\tau_k)} \bigg] \dd\theta  \\ &   	\textstyle
	- \int\limits_{0}^{\tau_k} \psi(\theta-\tau_k)^{\top} JH_{k} \bigg[\bigg(\mathrm{C}_{\sigma}[\varphi]+\int\limits_0^{\theta} \frac{\varphi(\eta)}{2\sigma} e^{-\sigma \eta} \dd\eta  \bigg) e^{\sigma \theta} +   \\ &	\textstyle \qquad \qquad \qquad \qquad \qquad \qquad
	\bigg( \mathrm{C}_{\negative \sigma}[\varphi]-\int\limits_0^{\theta} \frac{\varphi(\eta)}{2\sigma} e^{\sigma \eta} \dd\eta  \bigg) e^{-\sigma \theta} \bigg]\dd\theta \Bigg)\Bigg]. 
	\end{align*}
	
	Firstly, observe that
	\begin{align*}
	\textstyle
	\int\limits_{0}^{\tau_k} \psi(\theta)^{\top} JH_{\negative k} \int\limits_0^{\theta-\tau_k} \frac{\varphi(\eta)}{2\sigma} e^{\negative\sigma \eta} \dd\eta &e^{\sigma (\theta-\tau_k)} \dd\theta \\ & =\textstyle \int\limits_{0}^{-\tau_k} \int\limits_{0}^{\tau_{k}+\eta} \psi(\theta)^{\top} e^{\sigma(\theta-\tau_k)} \dd\theta JH_{-k} \frac{\varphi(\eta)}{2\sigma} e^{\negative \sigma \eta} \dd\eta \\
	&\textstyle = \int\limits_{\tau_k}^{0} \int\limits_{0}^{\hat{\eta}} \psi(\theta)^{\top} e^{\sigma(\theta-\tau_k)} \dd\theta JH_{\negative k} \frac{\varphi(\hat{\eta}-\tau_k)}{2\sigma} e^{\negative\sigma (\hat{\eta}-\tau_k)} \dd\hat{\eta} \\
	&\textstyle = \negative \int\limits_{0}^{\tau_k} \int\limits_{0}^{\hat{\eta}} \frac{\psi(\theta)}{2\sigma}^{\top} e^{\sigma\theta} \dd\theta e^{-\sigma \hat{\eta}} JH_{\negative k} \varphi(\hat{\eta}-\tau_k)  \dd\hat{\eta},
	\end{align*}

	in which we first interchanged the integrating variables, then made the change of variable $\hat{\eta} = \tau_k + \eta$ and finally interchanged the integration boundaries and reordered the terms.
	Using the same procedure we find:
	\begin{equation*}
	\textstyle
	-\int\limits_{0}^{\tau_k} \psi(\theta)^{\top} JH_{-k} \!\! \int\limits_{0}^{\theta-\tau_k} \! \frac{\varphi(\eta)}{2\sigma} e^{\sigma \eta} \dd\eta\ e^{-\sigma(\theta-\tau_k)} \dd\theta  =  \int\limits_{0}^{\tau_k} \int\limits_{0}^{\hat{\eta}} \frac{\psi(\theta)}{2\sigma}^{\top} e^{-\sigma\theta} \dd\theta e^{\sigma\hat{\eta}} JH_{-k} \varphi(\hat{\eta}-\tau_k)  \dd\hat{\eta}.
	\end{equation*}
	In a similar fashion we obtain:
	\begin{align*}
	\textstyle
	\negative\int\limits_{0}^{\tau_k} \psi(\theta\minus\tau_k)^{\top} JH_k \int\limits_0^{\theta} \frac{\varphi(\eta)}{2\sigma} e^{\negative\sigma \eta} \dd\eta e^{\sigma \theta} \dd\theta &= \textstyle \int\limits_0^{\tau_k} \int\limits_{0}^{\eta\minus\tau_k} \frac{\psi(\hat{\theta})}{2\sigma}^{\top}\! e^{\sigma\hat{\theta}} \dd \hat{\theta} e^{\negative\sigma(\eta\minus\tau_k) }JH_k \varphi(\eta) \dd\eta \intertext{ and } 
	\textstyle \int\limits_{0}^{\tau_k} \psi(\theta\minus\tau_k)^{\top} JH_k \int\limits_0^{\theta} \frac{\varphi(\eta)}{2\sigma}e^{\sigma \eta} \dd\eta e^{\negative\sigma \theta} \dd\theta & \textstyle=\negative \int\limits_0^{\tau_k} \int\limits_{0}^{\eta\minus\tau_k} \frac{\psi(\hat{\theta})}{2\sigma}^{\top}\! e^{\negative\sigma \hat{\theta}} \dd\hat{\theta} e^{\sigma(\eta\minus\tau_k)} JH_k \varphi(\eta) \dd\eta,
	\end{align*}
	in which we used the change of variables $\hat{\theta} = \theta - \tau_k$.
	Next, by grouping the terms with $\mathrm{C}_{\sigma}[\varphi]$ and using $J^{\top} = - J$, $JH_{-k} = (JH_{k})^{\top}$ and $J\big(2\sigma M(\sigma)\big)^{-1} = -\big(2\sigma M(-\sigma)\big)^{-T}J$ we find
	\begin{multline*}
	\textstyle \left(\psi(0)^{\top} J + \sum\limits_{k=1}^{K} \left[\int\limits_0^{\tau_k} \psi(\theta)^{\top} J H_{-k} e^{\sigma(\theta-\tau_k)} \dd\theta - \int\limits_0^{\tau_k} \psi(\theta-\tau_k)^{\top}J H_k e^{\sigma \theta} \dd\theta \right] \right) C_{\sigma}[\varphi] \\
	\textstyle = \left( \psi(0)^{\top}- \sum\limits_{k=1}^{K} \left[\int\limits_{0}^{\tau_k}\psi(\theta)^{\top} H_k^{\top} e^{\sigma(\theta-\tau_k)}\dd\theta - \int\limits_0^{\tau_k}\psi(\theta-\tau_k)^{\top} H_{-k}^{\top} e^{\sigma \theta} \dd\theta \right]\right) J \mathrm{C}_{\sigma}[\varphi] \\
	\textstyle = \mathrm{C}_{-\sigma}[\psi]^{\top} J\left(\varphi(0) -  \sum\limits_{k=1}^{K} \Big[\negative H_{-k} \int\limits_{0}^{\tau_k} \varphi(\eta-\tau_k) e^{-\sigma \eta} \dd\eta  + H_k \int\limits_{0}^{\tau_k} \varphi(\eta) e^{-\sigma( \eta-\tau_k)} \dd\eta \Big]\right).
	\end{multline*}
	Similarly for the terms with $C_{-\sigma}\left[\varphi\right]$ we find
	\begin{multline*}
	\textstyle
	\left(\psi(0)^{\top}J + \sum\limits_{k=1}^{K}\left[ \int\limits_0^{\tau_k} \psi(\theta)^{\top} J H_{-k} e^{-\sigma(\theta-\tau_k)} \dd\theta - \int\limits_0^{\tau_k} \varphi(\theta-\tau_k)^{\top} J H_k e^{-\sigma \theta} \dd\theta\right]\right) \mathrm{C}_{-\sigma}[\varphi] \\ \textstyle = C_{\sigma}[\psi]^{\top}J \left(\varphi(0)+\sum\limits_{k=1}^{K}\left[H_{-k}\int\limits_0^{\tau_k} \varphi(\eta-\tau_k) e^{\sigma \eta} \dd\eta - H_k \int\limits_0^{\tau_k}\varphi(\eta)e^{\sigma(\eta-\tau_k)} \dd \eta\right]\right).
	\end{multline*}
	Combining all these relations, we obtain the desired result.
\end{proof}
Next, we introduce some properties of the (infinite-dimensional) eigenvalue problem associated with this operator:
\begin{equation}
\label{eq:mapped_eigenvalue_problem}
\mappedoperator^{-1} \phi = \mu \phi.
\end{equation}
\begin{proposition} For $\sigma$ not an eigenvalue of $\operator$, the infinite-dimensional eigenvalue problem in \eqref{eq:mapped_eigenvalue_problem} satisfies the following three properties
	\label{property:R_sigma_inverse}.
	\begin{enumerate}
		\item 	If $\lambda$ is an eigenvalue of $\operator$ then $\mu = \frac{1}{\lambda^2-\sigma^2}$ is an eigenvalue of $\mappedoperator^{-1}$ and visa versa, if $\mu$ is an eigenvalue of $\mappedoperator^{-1}$ then $\pm\sqrt{\frac{1}{\mu}+\sigma^2}$ are eigenvalues of $\operator$. 
		\item Let $\lambda\neq 0$ and $-\lambda$ be simple eigenvalues of $\mathcal{H}$ with eigenfunctions  $\varphi_{+}$ and $\varphi_{-}$, respectively, then the eigenspace of $\mu = \frac{1}{\lambda^2-\sigma^2}$ is spanned by $\left\{\varphi_{+},\varphi_{-}\right\}$. 		
		\item The eigenvalues of $\mappedoperator^{-1}$ have even multiplicity.
		
	\end{enumerate}
\end{proposition}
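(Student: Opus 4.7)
My plan is to exploit the factorization $\mappedoperator=(\operator-\sigma\mathcal{I}_X)(\operator+\sigma\mathcal{I}_X)$, which already gives the operator identity $\mappedoperator\,\varphi(\theta)=\varphi''(\theta)-\sigma^2\varphi(\theta)$ stated after the definition. The forward direction of Part 1 is then immediate: if $\operator\varphi=\lambda\varphi$, two applications give $\mappedoperator\varphi=(\lambda-\sigma)(\lambda+\sigma)\varphi$, so $\varphi$ is an eigenfunction of $\mappedoperator^{-1}$ with eigenvalue $\mu=1/(\lambda^2-\sigma^2)$, provided $\sigma\neq\pm\lambda$ (which is guaranteed since $\sigma\notin\mathrm{spec}(\operator)$). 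Note in particular that the eigenfunctions of $\operator$ from \Cref{proposition:equivalence_infinite_nonlinear}, namely $\varphi_{+}(\theta)=v_{+}e^{\lambda\theta}$ and $\varphi_{-}(\theta)=v_{-}e^{-\lambda\theta}$, are produced this way.

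For the reverse direction of Part 1 the key step is to solve $\mappedoperator^{-1}\phi=\mu\phi$ as a second order ODE. Setting $\hat\lambda:=\sqrt{1/\mu+\sigma^2}$, any eigenfunction of $\mappedoperator^{-1}$ (for the case $\hat\lambda\neq 0$) must take the form $\phi(\theta)=v_+e^{\hat\lambda\theta}+v_-e^{-\hat\lambda\theta}$ with $v_{\pm}\in\C^{2n}$. The membership $\phi\in D(\mappedoperator)$ then reduces, via the boundary-type conditions \eqref{eq:structure_preserving_operator_D1}--\eqref{eq:structure_preserving_operator_D2}, to a finite-dimensional algebraic system in $v_+$ and $v_-$. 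The crucial simplification is that, after dividing \eqref{eq:structure_preserving_operator_D2} by $\hat\lambda$ and forming the sum of and the difference with \eqref{eq:structure_preserving_operator_D1}, the coupled system decouples cleanly into
\[
M(\hat\lambda)\,v_+=0\quad\text{and}\quad M(-\hat\lambda)\,v_-=0,
\]
i.e.\ $v_+$ and $v_-$ live in the right nullspaces of $M(\hat\lambda)$ and $M(-\hat\lambda)$ respectively. Since $\phi\not\equiv 0$, at least one of $v_\pm$ is nonzero, and \Cref{proposition:hamiltonian_symmetry} then guarantees that both $\hat\lambda$ and $-\hat\lambda$ are eigenvalues of \eqref{eq:nlevp}, which by \Cref{proposition:equivalence_infinite_nonlinear} are eigenvalues of $\operator$. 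Part 2 is then a direct specialization: the two eigenfunctions $\varphi_{+}$ and $\varphi_{-}$ lie in the $\mu$-eigenspace (by Part 1, forward), are linearly independent because $\lambda\neq 0$, and the decoupled nullspace description above shows that simplicity of $\pm\lambda$ as eigenvalues of $\operator$ forces $v_\pm$ to be scalar multiples of the corresponding eigenvectors, so that $\phi\in\Span\{\varphi_+,\varphi_-\}$.

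Part 3 then follows from the same decoupling by counting dimensions: any eigenspace of $\mappedoperator^{-1}$ at $\mu$ is isomorphic to $\ker M(\hat\lambda)\oplus\ker M(-\hat\lambda)$, and \Cref{proposition:hamiltonian_symmetry} implies that $\dim\ker M(\hat\lambda)=\dim\ker M(-\hat\lambda)$ (because right eigenvectors of $-\hat\lambda$ are obtained from left eigenvectors of $\hat\lambda$ by left-multiplication with the invertible matrix $J$, and $\dim\ker M(\hat\lambda)=\dim\ker M(\hat\lambda)^\top$ for square matrices), so the sum is even. The main obstacle I anticipate is the degenerate case $\hat\lambda=0$ (which only occurs when $\sigma\neq 0$ and $\mu=-1/\sigma^2$, corresponding to $0\in\mathrm{spec}(\operator)$): here the ODE becomes $\phi''=0$ with general solution $\phi(\theta)=c_0+c_1\theta$, and conditions \eqref{eq:structure_preserving_operator_D1}--\eqref{eq:structure_preserving_operator_D2} couple $c_0$ and $c_1$ into a single nilpotent-type system involving $M(0)$. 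This case would need a separate examination of the Jordan structure at the origin to confirm the even-multiplicity claim; for the non-degenerate cases the decoupled argument above suffices.
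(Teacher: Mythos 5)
Your proposal is correct, and for the converse direction of Part 1 and for Parts 2--3 it takes a more explicit route than the paper. The paper's proof handles the converse purely at the operator level: from $\mappedoperator^{-1}\phi=\mu\phi$ it writes $\bigl(\operator-\hat\lambda\mathcal{I}_X\bigr)\bigl(\operator+\hat\lambda\mathcal{I}_X\bigr)\phi=0$ with $\hat\lambda=\sqrt{1/\mu+\sigma^2}$, concludes that one of the two factors has a nontrivial null space, and invokes \Cref{proposition:hamiltonian_symmetry} to get both $\pm\hat\lambda$; Parts 2 and 3 are then essentially asserted from the observation that $\lambda$ and $-\lambda$ are mapped together to the same $\mu$ (with the $\lambda=0$ case dispatched by noting that $0$ has even multiplicity as an eigenvalue of $\operator$). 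You instead solve the ODE $\phi''=\hat\lambda^2\phi$ explicitly and show that conditions \eqref{eq:structure_preserving_operator_D1}--\eqref{eq:structure_preserving_operator_D2} decouple into $M(\hat\lambda)v_+=0$ and $M(-\hat\lambda)v_-=0$; I verified this decoupling (the two conditions read $M(\hat\lambda)v_++M(-\hat\lambda)v_-=0$ and, after dividing the second by $\hat\lambda$, $M(\hat\lambda)v_+-M(-\hat\lambda)v_-=0$), and it is correct for $\hat\lambda\neq0$. What your route buys is a genuine proof of Part 2 (the paper only asserts that the eigenspace is spanned by $\varphi_+$ and $\varphi_-$; your kernel description shows nothing else can appear) and a cleaner dimension count for Part 3 via $\dim\ker M(\hat\lambda)=\dim\ker M(-\hat\lambda)$, which follows from \Cref{proposition:hamiltonian_symmetry} exactly as you say. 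What it costs is the separate treatment of the degenerate case $\hat\lambda=0$, which you correctly flag but do not complete; the paper covers it in one line by transferring the even multiplicity of $0\in\mathrm{spec}(\operator)$ to $-\sigma^{-2}$, and you could close your gap the same way rather than analysing the $\phi(\theta)=c_0+c_1\theta$ system directly. (Both your argument and the paper's are really statements about geometric multiplicity; neither addresses possible Jordan structure, so this is not a defect relative to the paper.)
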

\begin{proof}
	First we will show that if $\varphi\in D(\operator)$ is an eigenfunction of $\operator$ associated with $\lambda$, \ie $\operator\varphi = \lambda \varphi$, then this function is also an eigenfunction of $\mappedoperator^{-1}$  associated with $\mu = \frac{1}{\lambda^2-\sigma^2}$. This result follows immediately from the following equality:
	\[
	\mappedoperator\varphi = (\operator - \sigma I_{X})(\operator + \sigma I_{X}) \varphi = (\lambda+\sigma)(\operator - \sigma I_{X})\varphi =  (\lambda^{2}-\sigma^2)\varphi,
	\]
	which implies that $\mappedoperator^{-1} \varphi = \frac{1}{\lambda^2- \sigma^2}\varphi$. Visa versa, if there exists a $\phi$ such that $\mappedoperator^{-1}\phi = \mu \phi$, then $\left(\mathcal{H}-\sqrt{\frac{1}{\mu}+\sigma^2}\mathcal{I}_{X}\right)\left(\mathcal{H}+\sqrt{\frac{1}{\mu}+\sigma^2}\mathcal{I}_{X}\right)\phi = 0$, meaning that either $\mathcal{H}-\sqrt{\frac{1}{\mu}+\sigma^2}\mathcal{I}_{X}$ or $\mathcal{H}+\sqrt{\frac{1}{\mu}+\sigma^2}\mathcal{I}_{X}$ has a non-trivial null space. The fact that both $\pm \sqrt{\frac{1}{\mu}+\sigma^2}$ are  eigenvalues of $\operator$ follows from \Cref{proposition:hamiltonian_symmetry}.
	
	Thus if $\lambda\neq0$ is an eigenvalue of $\operator$, then both $\lambda$ and $-\lambda$ are mapped together to $\mu = \frac{1}{\lambda^2- \sigma^2}$ and the eigenspace associated with $\mu$ is spanned by their eigenfunctions. 
	
	If $\lambda = 0$ is an eigenvalue of $\operator$, then it has even multiplicity. As a consequence, $-\sigma^{-2}$ is an eigenvalue of $\mappedoperator^{-1}$ with the same multiplicity.
\end{proof}

Next we examine the following Krylov subspace generated by $\mappedoperator^{-1}$:
\begin{equation*}
\mathrm{K}_m\left(\mappedoperator^{-1},\phi_1\right) = \Span\left\{\phi_1, \mappedoperator^{-1} \phi_1, \mappedoperator^{-2}\phi_1,\dots,\mappedoperator^{-(m-1)}\phi_1 \right\} \subseteq X,
\end{equation*}
with $\phi_1\in X$ an arbitrary real-valued starting function. This subspace has the following important property.

\begin{proposition}
	\label{theorem:J_neutrality} Let $\sigma$ not be an eigenvalue of $\mathcal{H}$ and let $\phi_1\in X$ be a real-valued function, then for any two functions $\varphi$ and $\psi$ in the Krylov subspace $\mathrm{K}_m\left(\mappedoperator^{-1},\phi_1\right)$ the equality $\bilinear{\varphi}{J\psi} = 0$ holds, with $\bilinear{\cdot}{\cdot}$ as defined in \eqref{eq:bilinear_form}.
\end{proposition}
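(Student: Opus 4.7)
The plan is to reduce the claim to a standard algebraic fact: every Krylov subspace of an operator that is self-adjoint with respect to a skew-symmetric bilinear form is isotropic for that form. Self-adjointness of $\mappedoperator^{-1}$ with respect to $\bilinear{\cdot}{J\cdot}$ is already provided by \Cref{property:skew_hamiltonian}, so the only missing ingredient is skew-symmetry of the form $(\varphi,\psi) \mapsto \bilinear{\varphi}{J\psi}$.

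Step one is therefore to establish $\bilinear{\varphi}{J\psi} = -\bilinear{\psi}{J\varphi}$ for arbitrary $\varphi,\psi\in X$. For the boundary term this is immediate from $J^{\top} = -J$ together with the fact that a scalar equals its transpose:
\[
(J\psi(0))^{\top}\varphi(0) = -\psi(0)^{\top}J\varphi(0) = \varphi(0)^{\top}J\psi(0) = -(J\varphi(0))^{\top}\psi(0).
\]
The two integral terms in the definition of $\bilinear{\cdot}{\cdot}$ are treated in the same spirit, but now the Hamiltonian relation $(JH_{-k})^{\top} = JH_k$ from \Cref{assumption:hamiltonian_structure} is essential: after transposing the scalar integrand and using $J^{\top} = -J$, the first integral of $\bilinear{\varphi}{J\psi}$ becomes the negative of the \emph{second} integral of $\bilinear{\psi}{J\varphi}$, and vice versa. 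No change of integration variable is required.

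Step two uses \Cref{property:skew_hamiltonian} iteratively to move all inverse powers of $\mappedoperator$ onto a single argument: for all $i,j\geq 0$,
\[
\bilinear{\mappedoperator^{-i}\phi_1}{J\mappedoperator^{-j}\phi_1} = \bilinear{\mappedoperator^{-(i+j)}\phi_1}{J\phi_1}.
\]
Applying the skew-symmetry from step one and then self-adjointness once more gives
\[
\bilinear{\mappedoperator^{-(i+j)}\phi_1}{J\phi_1} = -\bilinear{\phi_1}{J\mappedoperator^{-(i+j)}\phi_1} = -\bilinear{\mappedoperator^{-(i+j)}\phi_1}{J\phi_1},
\]
so this scalar equals its own negative and hence vanishes. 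Extending from the spanning set $\{\mappedoperator^{-i}\phi_1\}_{i=0}^{m-1}$ to arbitrary elements of $\mathrm{K}_m(\mappedoperator^{-1},\phi_1)$ by bilinearity completes the proof.

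The main obstacle is step one: the bilinear form $\bilinear{\cdot}{\cdot}$ is not visibly symmetric and its two integral terms are not individually skew under $\varphi \leftrightarrow \psi$. The Hamiltonian relation between $H_k$ and $H_{-k}$ is precisely what pairs them into the required skew combination, which is why \Cref{assumption:hamiltonian_structure} is the decisive structural input. Real-valuedness of $\phi_1$ plays no role in this particular argument; it is imposed in the statement because it is needed elsewhere to guarantee that the Krylov subspace itself is real-valued.
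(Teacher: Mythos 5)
Your proof is correct and takes essentially the same route as the paper: the paper likewise reduces to showing $\bilinear{\mappedoperator^{-i}\phi_1}{J\mappedoperator^{-j}\phi_1}=0$ by bilinearity, shifts all inverse powers onto one argument via the self-adjointness in \Cref{property:skew_hamiltonian}, and concludes from the anti-symmetry of $\bilinear{\cdot}{J\cdot}$ that the quantity equals its own negative. Your step one is precisely the paper's Appendix~\ref{appendix:anti_symmetry_bilinear_J_form} (same transposition argument using $J^{\top}=-J$ and $(JH_{-k})^{\top}=JH_k$, no change of variables), and your closing observation that real-valuedness of $\phi_1$ is not used here is also accurate.
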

\begin{proof}
	Each function in $\mathrm{K}_{m}\left(\mappedoperator^{-1},\phi_1\right)$ can be written as $P_{m-1}(\mappedoperator^{-1})\phi_1$ with $P_{m-1}(\cdot)$ a polynomial of degree (at most) $m-1$. Due to the bilinearity of $\bilinear{\cdot}{J\cdot}$, it thus suffices to prove that $\bilinear{\mappedoperator^{-i}\phi_1}{J\mappedoperator^{-j}\phi_1} = 0$ for $i,j=0,\dots,m-1$.
	
	By \Cref{property:skew_hamiltonian} we have
	\begin{equation*}
	\bilinear{\mappedoperator^{-i}\phi_1}{J \mappedoperator^{-j}\phi_1} 	=\bilinear{\mappedoperator^{-(j+i)}\phi_1}{J \phi_1}= \bilinear{\phi_1}{J\mappedoperator^{-(j+i)}\phi_1}.
	\end{equation*}
	However, due to the fact that $\bilinear{\varphi}{J\psi} = -\bilinear{\psi}{J\varphi}$, \ie{} $\bilinear{\cdot}{J\cdot}$ is anti-symmetric (see Appendix~\ref{appendix:anti_symmetry_bilinear_J_form}), we also have 
	\[
	\bilinear {\mappedoperator^{-(j+i)}\phi_1}{J\phi_1} = - \bilinear{ \phi_1}{J \mappedoperator^{-(j+i)}\phi_1},
	\]
	which implies that $\bilinear{\mappedoperator^{-i}\phi_1}{ J\mappedoperator^{-j}\phi_1} = 0$.
\end{proof}
This proposition has the following important consequence.
\begin{theorem}
	\label{theorem:uniqueness}
	For $\sigma$ not an eigenvalue of $\mathcal{H}$ and $\phi_1\in X$ a real-valued function, let $\lambda\neq0$ be a simple eigenvalue of $\mathcal{H}$ (or equivalently of the NLEVP \eqref{eq:nlevp}) with associated eigenfuction $\varphi_{+}$ and let $\varphi_{-}$ be the eigenfunction associated with the eigenvalue $-\lambda$, then the dimension of the intersection of $\Span \{\varphi_{+},\varphi_{-}\}$ and $ \mathrm{K}_m(\mappedoperator^{-1},\phi_1)$ is at most 1.
\end{theorem}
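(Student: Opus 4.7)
The plan is to mirror the finite-dimensional argument from \Cref{proposition:dimension_intersection}, using the $J$-neutrality of the Krylov subspace established in \Cref{theorem:J_neutrality} in place of its finite-dimensional counterpart. I would argue by contradiction: suppose the intersection has dimension $2$. Then both $\varphi_{+}$ and $\varphi_{-}$ lie in $\mathrm{K}_m(\mappedoperator^{-1},\phi_1)$, and \Cref{theorem:J_neutrality} would force
\[
\bilinear{\varphi_{-}}{J\varphi_{+}} = 0.
\]
The remainder of the proof consists in showing that, under the simplicity hypothesis on $\lambda$, this bilinear value is in fact nonzero, producing the desired contradiction.

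To establish $\bilinear{\varphi_{-}}{J\varphi_{+}}\neq 0$, I would invoke \Cref{proposition:equivalence_infinite_nonlinear} to write $\varphi_{+} = v_{+} e^{\lambda\cdot}$ and $\varphi_{-} = v_{-} e^{-\lambda\cdot}$, where $v_{\pm}$ are right eigenvectors of the NLEVP associated with $\pm\lambda$. By \Cref{proposition:hamiltonian_symmetry}, $Jv_{+}$ is a left eigenvector of \eqref{eq:nlevp} associated with $-\lambda$. Substituting these explicit exponential forms into \eqref{eq:bilinear_form}, the exponential factors combine as $e^{\lambda\theta}e^{-\lambda(\theta-\tau_k)} = e^{\lambda\tau_k}$ and $e^{\lambda(\theta-\tau_k)}e^{-\lambda\theta} = e^{-\lambda\tau_k}$, collapsing each integral over $[0,\tau_k]$ to a constant multiplied by $\tau_k$. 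Combined with $J^{\top}=-J$, a direct manipulation yields the clean identity
\[
\bilinear{\varphi_{-}}{J\varphi_{+}} = (Jv_{+})^{\top} M'(-\lambda)\, v_{-},
\]
with $M'(\cdot)$ the derivative of the characteristic matrix \eqref{eq:characteristic_matrix}.

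To finish, I would observe that since $\lambda\neq 0$ is simple, \Cref{proposition:hamiltonian_symmetry} gives a bijection between (right and left) eigenpairs at $\lambda$ and at $-\lambda$, so $-\lambda$ is also a simple eigenvalue. It is a standard fact for analytic matrix-valued functions that at a simple eigenvalue the bilinear form $w^{\top}M'(\cdot)v$ between the corresponding left and right eigenvectors is nonzero; applied here it gives $(Jv_{+})^{\top} M'(-\lambda)\, v_{-}\neq 0$, contradicting the previous vanishing.

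The main obstacle I expect is the bookkeeping in the bilinear computation: one must keep track of the two integrals with their opposite signs in \eqref{eq:bilinear_form}, apply the change of variables $\hat\eta = \theta - \tau_k$ correctly, and exploit both $J^{\top}=-J$ and \Cref{assumption:hamiltonian_structure} to rewrite the $H_{\pm k}$ terms. Once this bookkeeping is done and the resulting expression is recognized as $(Jv_{+})^{\top}M'(-\lambda)v_{-}$, the simplicity-based nondegeneracy argument is routine and the contradiction with \Cref{theorem:J_neutrality} closes the proof.
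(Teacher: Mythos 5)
Your proposal is correct and follows essentially the same route as the paper: write $\varphi_{\pm}$ in exponential form, identify the bilinear value with a left--right eigenvector pairing against the derivative of the characteristic matrix, conclude nonvanishing from simplicity, and contradict the $J$-neutrality of the Krylov subspace. The only cosmetic difference is that you evaluate $\bilinear{\varphi_{-}}{J\varphi_{+}}$ at $-\lambda$ whereas the paper evaluates $\bilinear{\varphi_{+}}{J\varphi_{-}} = -w_{+}^{\top}\frac{\partial M(\lambda)}{\partial\lambda}v_{+}$ at $\lambda$; by the anti-symmetry of $\bilinear{\cdot}{J\cdot}$ these are equivalent.
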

\begin{proof}
	Recall from \Cref{proposition:equivalence_infinite_nonlinear} that $\varphi_{+}(\theta)=v_{+} e^{\lambda \theta}$ and $\varphi_{-}(\theta) = v_{-}e^{-\lambda \theta}$, with $v_{+}$ and $v_{-}$ right eigenvectors of the NLEVP \eqref{eq:nlevp} associated with $\lambda$ and $-\lambda$, respectively. Furthermore, from \Cref{proposition:hamiltonian_symmetry} it follows that there exists a vector $w_{+}$ which is a left eigenvector of \eqref{eq:characteristic_matrix} associated with $\lambda$, such that $v_{-} = Jw_{+}$. Combing these properties with the definition of the bilinear form in \eqref{eq:bilinear_form}, we find that 
	\begin{align*}
	\bilinear{\varphi_{+}}{J\varphi_{\negative}} &= -\bigg[w_{+}^{\top} v_{+} + \sum_{k=1}^{K}\bigg(\int\limits_{0}^{\tau_k}e^{-\lambda \theta} w_{+}^{\top} H_{-k}v_{+} e^{\lambda(\theta-\tau_k)} \dd \theta\\
	& \qquad \qquad \qquad \qquad \quad \qquad \qquad \qquad - \int\limits_{0}^{\tau_k} e^{-\lambda(\theta-\tau_k)} w_{+}^{\top}  H_k v_{+} e^{\lambda\theta}\dd\theta \bigg)\bigg] \\
	&= -w_{+}^{\top}\left[I+\sum_{k=1}^{K}\Big(H_{-k}e^{-\lambda\tau_k}\tau_k-H_{k} e^{\lambda\tau_k} \tau_k\Big)\right]v_{+} = -w_{+}^{\top}\frac{\partial M(\lambda)}{\partial \lambda} v_{+} \neq 0,
	\end{align*}
	in which the last inequality follows from the fact that $\lambda$ is a simple eigenvalue of the NLEVP \eqref{eq:nlevp}. Thus if the intersection of $\Span\{\varphi_{+},\varphi_{-}\}$ and $ \mathrm{K}_m(\mappedoperator^{-1},\phi_1)$ has dimension two, then both $\varphi_{+}$ and $\varphi_{-}$ must lie inside this Krylov subspace, which would contradict \Cref{theorem:J_neutrality}.
\end{proof}
Combining this result with \Cref{property:R_sigma_inverse}, we have the following important result.
\begin{corollary}
	\label{corollary:projection_once}
	Under the conditions of \Cref{theorem:uniqueness} and assuming that $\lambda\neq0$ is a simple eigenvalue of $\operator$, the dimension of the intersection of the eigenspace of $\mappedoperator^{-1}$ associated with $\mu = 1/(\lambda^2-\sigma^2)$ and the Krylov subspace $\mathrm{K}_m\left(\mappedoperator^{-1},\phi_1\right)$ is at most 1.
\end{corollary}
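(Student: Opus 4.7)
The plan is to combine the two results that have just been established, so the proof is essentially a one-line deduction once the pieces are in place. First I would invoke \Cref{property:R_sigma_inverse}(2), which characterises the eigenspace of $\mappedoperator^{-1}$ associated with $\mu = 1/(\lambda^2-\sigma^2)$ as $\Span\{\varphi_{+},\varphi_{-}\}$, where $\varphi_{+}$ and $\varphi_{-}$ are eigenfunctions of $\operator$ associated with the eigenvalues $\lambda$ and $-\lambda$, respectively. To apply this characterisation, I need to know that $-\lambda$ is also a simple eigenvalue of $\operator$; this is automatic from \Cref{proposition:hamiltonian_symmetry}, since the bijection $v\mapsto Jv$ between right and left eigenspaces of the NLEVP maps the one-dimensional right eigenspace at $\lambda$ onto the one-dimensional left eigenspace at $-\lambda$, and the geometric multiplicities at $\pm\lambda$ must therefore coincide.

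Once the eigenspace of $\mu$ is identified with $\Span\{\varphi_{+},\varphi_{-}\}$, the conclusion follows immediately from \Cref{theorem:uniqueness}, which asserts that the intersection of this two-dimensional span with the Krylov subspace $\mathrm{K}_m(\mappedoperator^{-1},\phi_1)$ has dimension at most~$1$. Since the eigenspace of $\mu$ under $\mappedoperator^{-1}$ equals $\Span\{\varphi_{+},\varphi_{-}\}$, the intersection of that eigenspace with the Krylov subspace inherits the same bound.

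There is no real obstacle here beyond bookkeeping; the substantive work has already been done in \Cref{theorem:J_neutrality} (the $J$-neutrality of the Krylov subspace, coming from the self-adjointness of $\mappedoperator^{-1}$ with respect to $\bilinear{\cdot}{J\cdot}$ and the antisymmetry of that bilinear form) and in the non-degeneracy computation $\bilinear{\varphi_{+}}{J\varphi_{-}} = -w_{+}^{\top} M'(\lambda) v_{+} \neq 0$ inside \Cref{theorem:uniqueness}. The only point requiring mild care is the simplicity transfer from $\lambda$ to $-\lambda$ mentioned above, which I would state explicitly to legitimise the invocation of \Cref{property:R_sigma_inverse}(2).
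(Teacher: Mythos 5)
Your argument is correct and is essentially the paper's own reasoning: the paper gives no separate proof of this corollary, presenting it precisely as the combination of \Cref{property:R_sigma_inverse}(2) (identifying the eigenspace of $\mu$ with $\Span\{\varphi_{+},\varphi_{-}\}$) and \Cref{theorem:uniqueness}. Your added remark on transferring simplicity from $\lambda$ to $-\lambda$ is a legitimate detail the paper leaves implicit, and it holds since \Cref{proposition:hamiltonian_symmetry} (equivalently, the identity $M(\lambda)^{\top}=-JM(-\lambda)J^{-1}$ following from \Cref{assumption:hamiltonian_structure}) makes the spectrum and the relevant multiplicities symmetric under $\lambda\mapsto-\lambda$.
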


We conclude this section with \Cref{alg:structure_preserving_shift_invert_arnoldi}, which gives a high-level description of the structure preserving shift-invert infinite Arnoldi method operating on the infinite-dimensional space $X$. In this algorithm  $\Psi_{[i,j]}$ denotes the element on the $i$\textsuperscript{th} row and $j$\textsuperscript{th} column of $\Psi$, $\Psi_{[1:m,:]}$ is the submatrix of $\Psi$ consisting of its first $m$ rows, $\langle\cdot,\cdot \rangle_X$ is an appropriate inner product on $X$ (which we will define later on), and $\|\cdot\|_X$ is the norm associated with this inner product.

The algorithm works as follows. Starting from a real initial function $\phi_1$, an orthonormal basis for the Krylov subspace $\mathrm{K}_m(\mappedoperator^{-1},\phi_1)$ is constructed iteratively. After obtaining this orthonormal basis, the eigenvalues of $\mappedoperator^{-1}$ are approximated by the eigenvalues of its orthogonal projection onto the Krylov subspace, \ie the submatrix $\Psi_{[1:m,:]}$, which has a reduced Hessenberg structure. Finally, the approximations for the eigenvalues of $\operator$ can be obtained using the transformation from \Cref{property:R_sigma_inverse}. Each iteration in \Cref{alg:structure_preserving_shift_invert_arnoldi} consists of the following steps. First the Krylov subspace is extended with the candidate function $\varphi_{i+1} = \mappedoperator^{-1}\phi_i$. Next, this function is orthogonalized against the already obtained basis vectors and subsequently normalized to have norm 1. Note that in contrast to the traditional infinite Arnoldi method, one now has to ensure that $\phi_{i+1}$ is also \emph{orthogonal} against $\phi_1,\dots,\phi_{i}$ with respect to $\bilinear{\cdot}{J\cdot}$. Although this last orthogonality constraint holds by construction in exact arithmetic (see \Cref{theorem:J_neutrality}), it is no longer the case when working in finite precision, as we shall illustrate in \Cref{subsec:example1}.

 Inspired by \cite{Jarlebring2010}, we define the following inner product on $X$ based on the scaled Chebyshev expansion. More precisely, let $\phi \in X$ and $\psi \in X$ be Lipschitz continuous functions, then for each of these functions there exists a unique Chebyshev expansion series that is absolute and uniformly convergent \cite{Trefethen2013}:
\[
\textstyle
\varphi(\theta) = \sum\limits_{l=0}^{\infty} c_l T_{l}\left(\frac{\theta}{\tau_K}\right)\text{ and } \psi(\theta) = \sum\limits_{l=0}^{\infty} d_l T_{l}\left(\frac{\theta}{\tau_K}\right) \text{ for } \theta \in [-\tau_K,\tau_K],
\]
with $T_l(\cdot):= \cos\big(l \arccos(\cdot)\big)$ the $l$\textsuperscript{th} Chebyshev polynomial of the first kind.
The inner product of these two functions is then defined as
\begin{equation}
\label{eq:inner_product}
\textstyle
\left\langle \psi,\varphi \right\rangle_X  := \sum_{l=0}^{\infty} c_l^{H} d_l.
\end{equation}	
Furthermore, it can be shown, see \cite[Equation 4.10]{Jarlebring2010}, that this inner product is equivalent with the following expression
\begin{equation*}
\textstyle
\left\langle \psi,\varphi \right\rangle_X  = \frac{2}{\pi}I\left[\varphi^{H}\psi \right] - \frac{1}{\pi^2}I[\varphi^{H}]I[\psi].
\end{equation*}
with $
I[f] = \int\limits_{-\tau_K}^{\tau_K} \frac{f(\theta)}{\sqrt{\tau_K^2-\theta^2}}\dd\theta.
$

As in the finite-dimensional case (see \Cref{subsec:finite_dimensional_case}), \Cref{theorem:uniqueness} and \Cref{corollary:projection_once} have an important consequence for the obtained approximations of purely imaginary eigenvalues of \eqref{eq:nlevp}. More specifically, since $\phi_1$ is chosen real-valued, \Cref{prop:real_valued_Krylov_subspace} and the definition of the inner product in \eqref{eq:inner_product}, imply that 	$\mathrm{K}_m\left(\mappedoperator^{-1},\phi_1\right)$ consists of real-valued functions and that the matrix $\Psi$ is real-valued. Now, let $\pm \jmath\omega$ be simple eigenvalues of $\mathcal{H}$. These two eigenvalues are mapped together to the purely real eigenvalue $\mu = \frac{1}{-\omega^2-\sigma^2}$ of $\mappedoperator^{-1}$. However, as long as the Krylov subspace satisfies \Cref{theorem:J_neutrality} (which is always the case in exact arithmetic), only a single component of the two-dimensional eigenspace associated with $\mu$ will be approximated in the Krylov subspace and hence the multiple eigenvalue $\mu$ will be approximated by a simple, real Ritz value in the reduced Hessenberg matrix $\Psi[1:m,:]$. After re-transformation the obtained approximations for $\pm\jmath\omega$ are thus typically purely imaginary. In contrast, if orthogonality with respect to $\bilinear{\cdot}{J\cdot}$ is lost (due to rounding errors), the Krylov subspace will eventually also approximate a second component in the eigenspace of $\mu$. As a consequence, the double eigenvalue $\mu$ will be approximated by a pair of Ritz values: either two nearby real Ritz values or a pair of complex conjugate Ritz values. In the latter case, after re-transformation the obtained approximations for the purely imaginary eigenvalues $\pm\jmath\omega$ of $\mathcal{H}$ thus have a small real part.

\begin{algorithm}[H]
	\caption{High-level description of the structure preserving shift-invert infinite Arnoldi method for NLEVP \eqref{eq:nlevp}.}
	\label{alg:structure_preserving_shift_invert_arnoldi}
	\begin{algorithmic}
		\STATE Choose a \textbf{real-valued} initial function $\phi_1$ with $\|\phi_1\|_X= 1$.
		\STATE Define a $(m+1) \times m$ matrix $\Psi$ with all elements equal to zero.
		\FOR{$i=1, \dots, m$}
		\STATE \textbf{Extend the Krylov subspace with} $\varphi_{i+1}=\mappedoperator^{-1} \phi_{i}$.
		\STATE \textbf{Orthogonalize} $\varphi_{i+1}$ against the already obtained orthogonal basis for the Krylov subspace and normalize the result:
		\begin{equation*}
		\Psi_{[i+1,i]} \phi_{i+1} = \varphi_{i+1} - \Psi_{[1,i]}\, \phi_1 - \dots - \Psi_{[i,i]}\, \phi_{i}
		\end{equation*}
		with $\Psi_{[j,i]}\!=\! \left\langle \phi_{j},\varphi_{i+1} \right\rangle_X$ for $j\!=\!1,\ldots, i$  and $\Psi_{[i+1,i]}$ such that $\|\phi_{i+1}\|_{X} =1$.
		\STATE \textbf{Ensure} orthogonality of $\phi_{i+1}$ against $\phi_1,\dots,\phi_{i}$ with respect to $\bilinear{\cdot}{J\cdot}$.
		\ENDFOR
		\STATE \textbf{Compute the eigenvalues} $\mu$ of $\Psi_{[1:m,:]}$.
		\STATE\textbf{Return} the approximations for the eigenvalues of \eqref{eq:nlevp}: $\pm \sqrt{\frac{1}{\mu}+\sigma^2}$.
	\end{algorithmic}
\end{algorithm}
The algorithm introduced above is described in terms of functions and therefore not directly implementable using finite-dimensional operations. The following two sections will therefore describe how this algorithm can be implemented using finite-dimensional linear algebra operations and how one can ensure that the built up Krylov subspace satisfies \Cref{theorem:J_neutrality}, even in the presence of rounding errors.

\section{Numerical implementation for the shift  equal to zero}
\label{sec:shift_zero}
We start with the case $\sigma = 0$. First we will see that by choosing an appropriate initial function $\phi_1$, a natural finite-dimensional representation for the functions and operations in \Cref{alg:structure_preserving_shift_invert_arnoldi} appears. To this end, observe from \eqref{eq:solution_inverse_operator} that if $\mathcal{R}_0^{-1}$ is applied on a vector-valued polynomial of degree $N$, the result is a vector-valued polynomial of degree $N+2$. As a consequence, when choosing a vector-valued polynomial as initial function, the constructed Krylov subspace consists entirely of vector-valued polynomials. By choosing an appropriate basis for the space of polynomials, Algorithm~\ref{alg:structure_preserving_shift_invert_arnoldi} can thus be carried out using finite-dimensional operations on the coefficient vectors of these polynomials. Due to our choice for the inner product in  \eqref{eq:inner_product}, it makes sense to work with scaled Chebyshev functions. The extension step in \Cref{alg:structure_preserving_shift_invert_arnoldi} can now be computed using straightforward linear algebra operations, as demonstrated in the following theorem. 
\begin{theorem}
	\label{theorem:zero_shift}
	Let 
	\[\phi_i(\theta) = \sum_{l=0}^{N_i} q_l^{(i)} T_l\left(\frac{\theta}{\tau_K}\right)\] with $q_l^{(i)} \in \R^{2n}$ for $l=0,\dots,N_{i}$, then \[
	\varphi_{i+1}(\theta)= \mathcal{R}_0^{-1}\phi_i(\theta) = \sum_{l=0}^{N_i+2} v_l^{(i+1)} T_l\left(\frac{\theta}{\tau_K}\right),
	\] in which the coefficient vectors $v_2^{(i+1)},\dots,v_{N_i+2}^{(i+1)}$ follow from
	{\small
	\begin{equation}
	\label{eq:extension_sigma0}
	\setlength{\arraycolsep}{2pt}
	\begin{bmatrix}
	v_2^{(i+1)} & \cdots & v_{N_i+2}^{(i+1)}
	\end{bmatrix} = \tau_K^2\begin{bmatrix}
	q_0^{(i)} & \cdots & q_{N_i}^{(i)}
	\end{bmatrix}\!\! \begin{bmatrix}
	\frac{1}{4} & \\
	& \frac{1}{24} \\
	\frac{\negative 1}{6} & & \frac{1}{48} \\
	& \frac{\negative 1}{16} & & \frac{1}{80} \\
	\frac{1}{24} & & \frac{-1}{30} & & \frac{1}{120} \\
	& \ddots & & \ddots & & \ddots \\
	& \multicolumn{2}{r}{\frac{1}{4(N_i-1)(N_i-2)}}& \multicolumn{2}{r}{\frac{-1}{2(N_i-1)(N_i+1)}} & \multicolumn{2}{r}{\frac{1}{4(N_i+1)(N_i+2)}}
	\end{bmatrix},
	\end{equation}}%
\normalsize
	while $v_1^{(i+1)}$ and $v_0^{(i+1)}$ follow from \eqref{eq:structure_preserving_operator_D1} and \eqref{eq:structure_preserving_operator_D2}:
	\begin{align*}
	M(0)\, v_1^{(i+1)} &=\textstyle \sum\limits_{l=2}^{N_i+2} \left( H_0 T_{l}'(0) + \sum\limits_{k=1}^{K} \left(H_k T_l'\Big(\frac{\tau_k}{\tau_K}\Big) + H_{\negative k} T_l'\Big(\frac{\negative\tau_k}{\tau_K}\Big)\right)-I_{2n} \frac{T_l''(0)}{\tau_K}\right)v_l^{(i+1)} 
	\intertext{ and }
	M(0)\, v_0^{(i+1)} &= \textstyle \sum\limits_{l=1}^{N_i+2} \left( H_0 T_{l}(0) + \sum\limits_{k=1}^{K} \left(H_k T_l\Big(\frac{\tau_k}{\tau_K}\Big) + H_{\negative k} T_l\Big(\frac{\negative\tau_k}{\tau_K}\Big)\right)-I_{2n} \frac{T_l'(0)}{\tau_K}\right)v_l^{(i+1)}\!,
	\end{align*}
	with $M(\cdot)$  the characteristic matrix as defined in \eqref{eq:characteristic_matrix}.
\end{theorem}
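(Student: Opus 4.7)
The plan is to directly exploit the closed-form expression \eqref{eq:solution_inverse_operator0} for $\mathcal{R}_0^{-1}\phi$, which writes the image as a double antiderivative of $\phi$ plus a linear term $\mathrm{C}_1[\phi]\theta + \mathrm{C}_0[\phi]$ fixed by the boundary conditions \eqref{eq:structure_preserving_operator_D1}--\eqref{eq:structure_preserving_operator_D2}. Since $\phi_i$ is a vector-valued polynomial of degree $N_i$ in the scaled variable $\theta/\tau_K$, expanded in the Chebyshev basis $\{T_l(\theta/\tau_K)\}_{l=0}^{N_i}$, the double antiderivative will be a polynomial of degree $N_i+2$, so I only need to identify the Chebyshev coefficients of $v_l^{(i+1)}$ for $l=2,\dots,N_i+2$ from the iterated integral and then fix $v_0^{(i+1)}$ and $v_1^{(i+1)}$ via the boundary conditions.

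First I would establish the one-step integration rule for Chebyshev polynomials: for $l\geq 2$ one has the classical identity
\[
\int T_l(x)\dd x = \frac{1}{2(l+1)}T_{l+1}(x) - \frac{1}{2(l-1)}T_{l-1}(x) + \mathrm{const},
\]
together with the special cases $\int T_0 \dd x = T_1 + \mathrm{const}$ and $\int T_1 \dd x = \tfrac{1}{4}(T_0+T_2) + \mathrm{const}$. Changing variables to $x = \theta/\tau_K$ in \eqref{eq:solution_inverse_operator0} produces the factor $\tau_K^2$ quoted in \eqref{eq:extension_sigma0}. Applying the integration rule twice to each basis function $T_l(x)$ and collecting like terms yields, for $l\geq 2$,
\[
\iint T_l(x)\dd x\dd x = \frac{T_{l+2}(x)}{4(l+1)(l+2)} - \frac{T_l(x)}{2(l-1)(l+1)} + \frac{T_{l-2}(x)}{4(l-1)(l-2)} + \alpha x + \beta,
\]
with the convention that the $T_{l-2}$ term is absent for $l=2,3$ (the corresponding coefficient is absorbed into the $\alpha x + \beta$ part). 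The cases $l=0$ and $l=1$ are handled directly from the special-case identities above and produce only contributions to $T_0$, $T_1$, $T_2$ and $T_3$, which coincides with the first two columns of the matrix in \eqref{eq:extension_sigma0}. Collecting the contributions of all basis functions $T_l$ for $l=0,\dots,N_i$ to each output coefficient $v_m^{(i+1)}$ ($m=2,\dots,N_i+2$) gives exactly the tri-band structure with entries $\frac{1}{4(l+1)(l+2)}$, $-\frac{1}{2(l-1)(l+1)}$, $\frac{1}{4(l-1)(l-2)}$ on the three relevant diagonals, which is what the matrix in \eqref{eq:extension_sigma0} displays column by column.

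The remaining coefficients $v_0^{(i+1)}$ and $v_1^{(i+1)}$ are not determined by the double antiderivative alone (they correspond to $\mathrm{C}_0[\phi_i]$ and $\mathrm{C}_1[\phi_i]$). To fix them I would impose the two defining constraints \eqref{eq:structure_preserving_operator_D1} and \eqref{eq:structure_preserving_operator_D2} on the candidate $\varphi_{i+1} = \sum_{l=0}^{N_i+2} v_l^{(i+1)} T_l(\theta/\tau_K)$. Evaluating $\varphi_{i+1}$ and $\varphi_{i+1}'$ at $\theta = 0$ and at $\theta = \pm\tau_k$ using $T_l(\theta/\tau_K)$ and its derivatives, and using the fact that $T_l'(0)/\tau_K$ multiplies the coefficient of $T_l$ in $\varphi_{i+1}'$, one immediately recovers the two linear systems stated in the theorem for $v_1^{(i+1)}$ (from the first derivative condition \eqref{eq:structure_preserving_operator_D2}, combined with $\varphi_{i+1}''(0) = \phi_i(0)$ being already encoded in the double-integral contribution so that only terms for $l\geq 2$ appear on the right-hand side of the formula for $v_1^{(i+1)}$) and for $v_0^{(i+1)}$ (from \eqref{eq:structure_preserving_operator_D1}).

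The main obstacle is bookkeeping: one must be careful that the two ``free'' coefficients absorbed when integrating $T_0,T_1,T_2,T_3$ really match $v_0^{(i+1)}$ and $v_1^{(i+1)}$ and are not double-counted in the two boundary-condition linear systems, and that the sums on the right-hand sides of the systems for $v_1^{(i+1)}$ and $v_0^{(i+1)}$ start at the correct index ($l=2$ and $l=1$, respectively). Once this is verified, the proof reduces to the Chebyshev identities above and straightforward algebra, and invertibility of $M(0)$ (which holds by assumption since $\sigma=0$ is not an eigenvalue of $\mathcal{H}$) guarantees that the two linear systems admit unique solutions.
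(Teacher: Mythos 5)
Your proposal is correct and follows essentially the same route as the paper: the paper verifies $\mathcal{R}_0\varphi_{i+1}=\phi_i$ by expressing each $T_l$ as a combination of second derivatives $T_m''$ and matching coefficients, which is exactly your twice-applied Chebyshev antiderivative identity read in the reverse direction, and it likewise determines $v_1^{(i+1)}$ from \eqref{eq:structure_preserving_operator_D2} and then $v_0^{(i+1)}$ from \eqref{eq:structure_preserving_operator_D1}. The bookkeeping points you flag (absorbing the low-order integration constants into the affine part and starting the boundary-condition sums at $l=2$ and $l=1$) are handled exactly as you describe.
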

\begin{proof}
	We refer to \Cref{appendix:extension_step_sigma0} for more details on this result. 
\end{proof}
Notice that for large $n$, the main computation cost of the extension step consists of solving the systems for $v_0^{(i+1)}$ and $v_1^{(i+1)}$. However, because the same matrix $M(0)$ is used in each iteration, its factorization needs only to be computed ones, which greatly reduces the computation time.

Secondly, due to the chosen initial function, also the orthogonalisation step can be carried out using standard linear algebra operations. More specifically, as a consequence of \Cref{theorem:zero_shift}, the functions encountered in the $i$\textsuperscript{th} orthogonalisation step of \Cref{alg:structure_preserving_shift_invert_arnoldi} can be written as
\begin{equation}
\label{eq:cheb_expansion}
\phi_{j}(\theta) = \sum_{l=0}^{N_j} q^{(j)}_{l} T_l\left(\frac{\theta}{\tau_K}\right) \text{ for $j=1,\dots,i+1$ and } \varphi_{i+1}(\theta) = \sum_{l=0}^{N_{i+1}} v_l^{(i+1)} T_l\left(\frac{\theta}{\tau_K}\right),
\end{equation} 
with $q_l^{(j)}$ and $v_l^{(i+1)}$ belonging to $\R^{2n}$.	By introducing $v^{(i+1)}$ as the vector containing the stacked coefficients of $\varphi_{i+1}$, $q^{(j,i+1)}$ the vector containing the stacked coefficients of $\phi_{j}$ padded with zeros to length $2n(N_{i+1}+1)$ and $Q_{(i,i+1)}$  the matrix whose columns consists of the vectors $q^{(j,i+1)}$ for $j=1,\dots,i$
the orthogonalisation step reduces to
\begin{equation}
\label{eq:orthogonalisation_naive}
\Psi_{[i+1,i]}q^{(i+1,i+1)} = v^{(i+1)} -Q_{(i,i+1)}Q_{(i,i+1)}^{\top}v^{(i+1)}
\end{equation}
with $\Psi_{[i+1,i]}$ a normalisation constant such that $\|q^{(i+1,i+1)}\|_2 = 1$.

Up till now, the presented approach closely resembles the classic infinite Arnoldi method from \cite{Jarlebring2010}, but with the degree of the polynomials in the Krylov subspace increasing with two instead of one in each iteration. Recall however that it is important that the build-up Krylov subspace satisfies the condition in \Cref{theorem:J_neutrality}. Although this condition holds automatically in exact arithmetic, rounding errors cause a loss of orthogonality when working in finite precision. In finite precision arithmetic one therefore needs an \emph{additional orthogonalisation step}, which ensures orthogonality with respect to $\bilinear{\cdot}{J\cdot}$. To this end, note that for $\{\phi_j\}_{j=1}^{i}$ and $\varphi_{i+1}$ in form \eqref{eq:cheb_expansion} the bilinear form $\bilinear{\cdot}{J\cdot}$ can be evaluated using matrix-vector operations:
\begin{equation*}
\bilinear{\phi_j}{J\varphi_{i+1}} = {v^{(i+1)}}^{\top} \bigg(S_{N_{i+1}}^0 \otimes J  
+ \textstyle\sum\limits_{k=1}^{K} \Big( S_{N_{i+1}}^{-k} \otimes \big(J H_{-k}\big) 
+
S_{N_{i+1}}^{k} \otimes \big( J H_{k} \big) \Big) 
\bigg)q^{(j,i+1)},
\end{equation*}
in which $\otimes$ is the Kronecker product, the matrices $S_{N_{i+1}}^0$, $S_{N_{i+1}}^{1}$, $S_{N_{i+1}}^{-1}$, \dots, $S_{N_{i+1}}^{K}$ and $S_{N_{i+1}}^{-K}$ belong to $\R^{(N_{i+1}+1)\times (N_{i+1}+1)}$  and the elements on position $(l_1,l_2)$ of $S_{N_{i+1}}^{0}$, $S_{N_{i+1}}^{-k}$ and $S_{N_{i+1}}^{k}$ are equal to 
$
-T_{l_1}(0) T_{l_2}(0)$, $-\int\limits_{0}^{\tau_k} T_{l_1}\Big(\frac{\theta}{\tau_K}\Big) T_{l_2}\Big(\frac{\theta-\tau_k}{\tau_K}\Big) \dd \theta$ and \linebreak $ \int\limits_0^{\tau_k} T_{l_1}\Big(\frac{\theta-\tau_k}{\tau_K}\Big) T_{l_2}\Big(\frac{\theta}{\tau_K}\Big) \dd \theta,
$
respectively. Furthermore, observe that the matrix $$S_{N_{i+1}} = S_{N_{i+1}}^0\otimes J + \sum\limits_{k=1}^{K} \big( S_{N_{i+1}}^{-k} \otimes J H_{-k}  + S_{N_{i+1}}^{k} \otimes JH_k\big)$$ is skew-symmetric, \ie $S_{N_{i+1}}^{\top} = -S_{N_{i+1}}$.

In the $i$\textsuperscript{th} iteration step, the vector $q^{(i+1,i+1)}$  must thus be orthogonal with respect to the columns of both $Q_{(i,i+1)}$ and $S_{N_{i+1}} Q_{(i,i+1)}$. Moreover, because \linebreak ${q^{(j,i+1)}}^{\top}S_{N_{i+1}}q^{(j,i+1)} = 0$ for $j=1,\dots,i$ due to the skew-symmetry of $S_{{N_{i+1}}}$ and  ${q^{(j_1,i+1)}}^{\top}S_{N_{i+1}}q^{(j_2,i+1)} = 0$ for $j_1,j_2=1,\dots,i$ due to the orthogonality conditions in the previous iteration, the product $Q_{(i,i+1)}^{\top}S_{N_{i+1}} Q_{(i,i+1)}$ is zero. Thus in order to achieve simultaneous orthogonality with respect to $Q_{(i,i+1)}$ and $S_{N_{i+1}} Q_{(i,i+1)}$ we will use the following expression instead of \eqref{eq:orthogonalisation_naive}:
\begin{multline}
\label{eq:orthogonalisation_zero}
\Psi_{[i+1,i]} q^{(i+1,i+1)} = v^{(i+1)} - Q_{(i,i+1)} Q_{(i,i+1)}^{\top} v^{(i+1)} \\ - S_{N_{i+1}}Q_{(i,i+1)} \big((S_{N_{i+1}} Q_{(i,i+1)})^{\top}S_{N_{i+1}}Q_{(i,i+1)}\big)^{-1}(S_{N_{i+1}} Q_{(i,i+1)})^{\top} v^{(i+1)},
\end{multline}
with $\Psi_{[i+1,i]}$ a normalisation factor such that $\|q^{(i+1,i+1)}\|_2 = 1$. By using this expression, the resulting Krylov subspace satisfies \Cref{theorem:J_neutrality} up to machine precision, even in the presence of rounding errors.

\begin{remark}
	Note that the left upper blocks of the matrices $S_{N_{i+1}}^0$, $S_{N_{i+1}}^{-k}$ and $S_{N_{i+1}}^{k}$ are equal to $S_{N_{i}}^0$, $S_{N_{i}}^{-k}$ and $S_{N_{i}}^{k}$, respectively. Furthermore, note that these matrices only depend on the delays and not on the system matrices. This observation implies for instance that in the single delay case ($K=1$), where the delay can be rescaled to one by the substitution $s\leftarrow \tau_1 s$, these matrices can be precomputed.
\end{remark}

\section{Numerical implementation for non-zero shift}
\label{sec:non_zero_shift}
In this subsection, it will be assumed that the shift $\sigma$ is purely imaginary, \ie $\sigma = \jmath \omega$. A similar result can be obtained when the shift $\sigma$ is purely real, as will be explained in \Cref{remark:real_sigma}. 

Similar to the case $\sigma =0$ we will derive a compact representation for the functions and operations in \Cref{alg:structure_preserving_shift_invert_arnoldi} by choosing an appropriate structure for the initial function $\phi_1$. More specifically, if
\begin{equation}
\label{eq:preserverd_structure}
\phi(\theta) = P_N(\theta) e^{\jmath \omega \theta} + \overline{P_N(\theta)} e^{-\jmath \omega \theta} \text{ for }\theta \in [-\tau_K,\tau_K],
\end{equation} with $P_N(\cdot) : [-\tau_K,\tau_K] \mapsto \C^{2n}$ an arbitrary complex vector-valued polynomial of degree $N$ and $\overline{P_N(\theta)}$ the complex conjugate of $P_N(\theta)$, then the function $\mappedoperator^{-1}\phi$ is given by
\begin{equation}
\label{eq:preserverd_structure2}
\left(\mappedoperator^{-1}\phi\right)(\theta) = Q_{N+1}(\theta) e^{\jmath \omega \theta} + \overline{Q_{N+1}(\theta)} e^{-\jmath \omega \theta} \text{ for }\theta \in [-\tau_K,\tau_K],
\end{equation}
in which $Q_{N+1}(\cdot)$ is a complex vector-valued polynomial of degree $N+1$. Using this representation, the extension step in \Cref{alg:structure_preserving_shift_invert_arnoldi} can thus again be expressed in terms of operations on the vector-valued coefficients of these polynomials. This is formalized in the following theorem.

\begin{theorem}
	\label{theorem:non_zero_shift}
	If $\phi_i$ is given by
	\begin{equation}
	\label{eq:approach1_phi}
	\phi_i(\theta) := \left(\sum_{l=0}^{N_i} q_{l}^{(i)}\, T_l\left(\frac{\theta}{\tau_K}\right)\right)e^{\jmath \omega \theta} +\left(\sum_{l=0}^{N_i} \overline{q_{l}^{(i)}}\, T_l\left(\frac{\theta}{\tau_K}\right)\right) e^{-\jmath \omega \theta} 
	\end{equation}
	then extension step can be preformed using basic linear algebra operations:
	\begin{equation}
	\label{eq:approach1_varphi}\textstyle
	\varphi_{i+1}(\theta) = \left(\sum\limits_{l=0}^{N_i+1} v_{l}^{(i+1)} \, T_l\Big(\frac{\theta}{\tau_K}\Big) \right) e^{\jmath \omega \theta} + \left(\sum\limits_{l=0}^{N_i+1} \overline{v_{l}^{(i+1)}} \, T_l\Big(\frac{\theta}{\tau_K}\Big)\right) e^{-\jmath \omega \theta},
	\end{equation}
	in which the coefficient vectors $v_1^{(i+1)},\dots,v_{N_{i}+1}^{(i+1)}$ are given by
	\begin{multline}
	\label{eq:update_formula_approach1}
	\setlength{\arraycolsep}{2pt}
	\begin{bmatrix}
	v_{1}^{(i+1)} & \dots & v_{N_{i}+1}^{(i+1)}
	\end{bmatrix} \left(2\jmath\omega \tau_K\begin{bmatrix}
	\frac{1}{4} &  &  & &  \\
	& \frac{1}{6} &  &   &  \\
	\frac{-1}{4} &  & \frac{1}{8} &  &  \\
	& \ddots &  & \ddots & & \\
	& \multicolumn{3}{c}{\frac{-1}{2N_i}} &  \multicolumn{3}{c}{\frac{1}{2(N_i+2)}}
	\end{bmatrix} + \begin{bmatrix}
	0& & &\\
	1 &0 & &\\
	& \ddots &\ddots & \\
	& & 1 & 0
	\end{bmatrix} \right) \\[3mm] = \begin{bmatrix}
	q_{0}^{(i)} & \dots & q_{N_i}^{(i)}
	\end{bmatrix}  \tau_K^{2}\begin{bmatrix}
	\frac{1}{4} &  &  &  &  &  \\
	& \frac{1}{24}& & & & & \\
	\frac{-1}{6} & & \frac{1}{48} & & &\\
	& \frac{-1}{16} & &  \frac{1}{80} & & & &\\
	\frac{1}{24} & & \frac{-1}{30} & & & \frac{1}{120} & & \\
	& \ddots & &\ddots & & & \ddots & & \\
	& \multicolumn{3}{c}{\frac{1}{4(N_i-1)(N_i-2)}} &  \multicolumn{3}{c}{\frac{-1}{2(N_i-1)(N_i+1)}} & \multicolumn{3}{c}{ \frac{1}{4(N_i+1)(N_i+2)}}
	\end{bmatrix}
	\end{multline}
	and $v_{0}^{(i+1)}$ can be found by solving the following system
	\begin{align*}
\label{eq:expression_c0}
\jmath\omega  \,M(\jmath\omega)\, v_{0}^{(i+1)} =& \,\,
(H_0-\jmath \omega I_{2n}) \textstyle\sum\limits_{l=1}^{N_i+1}  \left(\Re\Big(v_{l}^{(i+1)} \Big)\frac{T_l'(0)}{\tau_K}+\jmath\omega v_{l}^{(i+1)} T_l(0)\right)  +\\ &
\textstyle \sum\limits_{k=1}^{K} \Big[ H_{k} \sum\limits\limits_{l=1}^{N_i+1}\Big(\Re\big( v_{l}^{(i+1)} e^{\jmath \omega \tau_k}\big)/\tau_K T_l'(\frac{\tau_k}{\tau_K})+\jmath \omega v_{l}^{(i+1)}T_l(\frac{\tau_k}{\tau_K})e^{\jmath \omega \tau_k}\Big) +\\  &
H_{\negative k} \textstyle\sum\limits_{l=1}^{N_i+1}\Big( \Re \big( v_{l}^{(i+1)} e^{-\jmath \omega \tau_k}\big)/\tau_K T_{l}'(-\frac{\tau_k}{\tau_K})+\jmath\omega v_{l}^{(i+1)} T_l(-\frac{\tau_k}{\tau_K}) e^{-\jmath \omega \tau_k}   \Big)\Big]
- \\&
\textstyle \sum\limits_{l=0}^{N_i} \Re\big(q_l^{(i)}\big) T_l(0),
\end{align*}
	with $M(\cdot)$ the characteristic matrix as defined in \eqref{eq:characteristic_matrix}.
\end{theorem}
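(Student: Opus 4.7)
The plan is to mirror the derivation of \Cref{theorem:zero_shift} and reduce the action of $\mappedoperator^{-1}$ to an ODE on the polynomial part, then match Chebyshev coefficients. Throughout I will use that $\phi_i$ of the form \eqref{eq:approach1_phi} is real-valued (being $f+\bar f$ with $f = Pe^{\jmath\omega\cdot}$), so by \Cref{prop:real_valued_Krylov_subspace} the function $\varphi_{i+1}=\mappedoperator^{-1}\phi_i$ is also real-valued.

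First I would establish the structural claim \eqref{eq:approach1_varphi}. Substituting \eqref{eq:approach1_phi} into \eqref{eq:solution_inverse_operator} with $\sigma=\jmath\omega$, the integrand $\phi_i(\eta)e^{\mp\jmath\omega\eta}$ splits into a polynomial piece plus a polynomial times $e^{\mp 2\jmath\omega\eta}$; direct antidifferentiation (or repeated integration by parts) of the latter produces a polynomial of one higher degree times $e^{\mp 2\jmath\omega\eta}$. After multiplication by $e^{\pm\jmath\omega\theta}$ and recombination one sees that $\varphi_{i+1}(\theta)=Q(\theta)e^{\jmath\omega\theta}+\tilde{Q}(\theta)e^{-\jmath\omega\theta}$ with $Q$, $\tilde Q$ polynomials of degree $N_i+1$; real-valuedness forces $\tilde Q=\overline{Q}$, giving \eqref{eq:approach1_varphi}. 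Alternatively one may simply verify that such an ansatz satisfies $\mappedoperator\varphi_{i+1}=\phi_i$ together with the conditions \eqref{eq:structure_preserving_operator_D1}-\eqref{eq:structure_preserving_operator_D2}, which uniquely determine $\varphi_{i+1}$.

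For the coefficient formula \eqref{eq:update_formula_approach1}, I would use the ODE reformulation $\mappedoperator\varphi_{i+1}=\varphi_{i+1}''-(\jmath\omega)^2\varphi_{i+1}=\phi_i$. Substituting the ansatz and equating the coefficients of $e^{\jmath\omega\theta}$ yields the reduced first-order-in-$Q'$/second-order-in-$Q$ equation
\[
Q''(\theta)+2\jmath\omega\,Q'(\theta)=P(\theta).
\]
Expanding $Q$ and $P$ in $T_l(\theta/\tau_K)$ and integrating twice from $0$ to $\theta$, I then apply the standard Chebyshev antiderivative identity $\int T_l(x)\,dx=\tfrac12\bigl(T_{l+1}/(l+1)-T_{l-1}/(l-1)\bigr)$ for $l\geq 2$ together with the low-index cases. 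One pass of integration (with the factor $\tau_K$ from $d\theta=\tau_K\,dx$) generates the banded matrix $2\jmath\omega\tau_K M_1$ multiplying $Q$-coefficients, and produces the shift matrix $M_2$ on the $Q''$-side; a second pass converts $P$-coefficients into the double-integration matrix scaled by $\tau_K^2$. Matching Chebyshev coefficients of $T_1,\dots,T_{N_i+1}$—indices unaffected by the constants of integration—yields exactly \eqref{eq:update_formula_approach1}. This is analogous to the derivation behind \eqref{eq:extension_sigma0}, and indeed the $\tau_K^2$-matrix on the right-hand side of \eqref{eq:update_formula_approach1} coincides with the one there.

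Finally, the equation for $v_0^{(i+1)}$ comes from the domain constraints \eqref{eq:structure_preserving_operator_D1}-\eqref{eq:structure_preserving_operator_D2}. Evaluating $\varphi_{i+1}$ and its derivatives at $0$ and at $\pm\tau_k$ using \eqref{eq:approach1_varphi} turns each constraint into a linear relation between $v_0^{(i+1)}$ and the already-determined coefficients $v_1^{(i+1)},\dots,v_{N_i+1}^{(i+1)}$, with the delays entering through factors $e^{\pm\jmath\omega\tau_k}$ and values of $T_l$ and $T_l'$ at $\pm\tau_k/\tau_K$. Eliminating the unwanted contribution corresponding to the $-\jmath\omega$ component (precisely the mechanism behind the factor $2\sigma M(\sigma)$ in \eqref{eq:const_plus}) produces the stated identity after regrouping so that the coefficient of $v_0^{(i+1)}$ assembles into $\jmath\omega\bigl(\jmath\omega I_{2n}-H_0-\sum_k(H_{-k}e^{-\jmath\omega\tau_k}+H_k e^{\jmath\omega\tau_k})\bigr)=\jmath\omega M(\jmath\omega)$.

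The main obstacle is the bookkeeping in the second step: correctly propagating the Chebyshev antiderivative identities twice, keeping track of the index shift (since each integration raises the degree by one), and verifying the boundary entries of the banded matrices where the $T_0$/$T_1$ cases require separate treatment. Everything else is routine linear algebra once the reduction to $Q''+2\jmath\omega Q'=P$ is in place; full algebraic details are deferred to an appendix analogous to \Cref{appendix:extension_step_sigma0}.
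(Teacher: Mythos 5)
Your proposal is correct and follows essentially the same route as the paper's proof in \Cref{appendix:extension_step_jomega}: substitute the ansatz into $\mappedoperator\varphi_{i+1}=\phi_i$, reduce to $Q''+2\jmath\omega Q'=P$ for the polynomial factors of $e^{\jmath\omega\theta}$, convert this to the coefficient equation \eqref{eq:update_formula_approach1} via the Chebyshev derivative/antiderivative identities (the paper expresses $T_l$ and $T_l'$ in the $T_m''$ basis, which is the same linear system your double integration produces), and recover $v_0^{(i+1)}$ from the two domain conditions \eqref{eq:structure_preserving_operator_D1}--\eqref{eq:structure_preserving_operator_D2}. The only slip is cosmetic bookkeeping: after integrating twice the constants of integration live in $\Span\{T_0,T_1\}$, so the coefficients to be matched are those of $T_2,\dots,T_{N_i+2}$ rather than $T_1,\dots,T_{N_i+1}$.
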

\begin{proof}
	For more details on deriving this result, see \Cref{appendix:extension_step_jomega}.
\end{proof}
\begin{remark}
	\label{remark:real_sigma}
	When the shift $\sigma$ is purely real, similar results can be obtained. More specifically, if $\phi$ is given by
	\[\phi(\theta) = P_N^{+}(\theta) e^{\sigma \theta} + P_N^{-}(\theta) e^{-\sigma \theta}
	\] with $P_N^{+}(\cdot)$ and $P_N^{-}(\cdot)$ arbitrary real vector-valued polynomials of degree $N$, then $\mappedoperator^{-1}\phi$ is given by 
	\[
	\big(\mappedoperator^{-1}\phi\big)(\theta) = Q_{N+1}^{+}(\theta) e^{\sigma \theta} + Q_{N+1}^{-}(\theta) e^{-\sigma \theta}
	\]
	with $Q_{N+1}^{+}(\cdot)$ and $Q_{N+1}^{-}(\cdot)$ real vector-valued polynomials of degree $N+1$.
\end{remark}
From a theoretical point of view, the result obtained above is appealing: similarly to the case $\sigma=0$ one can operate on the coefficients of polynomials of growing degree. Furthermore, due to the lower triangular structure of the matrix at the left side in \eqref{eq:update_formula_approach1}, the main computational cost of the extension step consists of solving a system in $M(\jmath\omega)$ which is of dimension $2n$ and whose factorisation needs to be computed only once. However, there are three difficulties that render this approach unsuited in practice.
\begin{enumerate}
	\item Firstly, although the functions generated by \Cref{alg:structure_preserving_shift_invert_arnoldi} are uniquely defined by the starting function $\phi_1$ and can be uniquely decomposed as in \eqref{eq:preserverd_structure} when such a structure is imposed on $\phi_1$, the decomposition in terms of $e^{\jmath\omega \theta}$ and $e^{-\jmath \omega \theta}$ is not uniquely defined on $X$. For example, let $f \in X$ and consider the decomposition
	$
	f(\theta) = P(\theta) e^{\jmath \omega\theta} + \overline{P(\theta)} e^{-\jmath\omega \theta},
	$
	then
	$
	R(\theta) e^{\jmath \omega \theta} + \overline{R(\theta)} e^{-\jmath \omega \theta}
	$
	with $R(\theta) = P(\theta)+\sin(\omega \theta) +\jmath \cos(\omega \theta)$ is another decomposition for $f$. This non-uniqueness causes problems when working in finite precision. For example, the range of the polynomial $P_{N_i}(\cdot)$ can grow large, while the range of the overall function $\phi_i$ remains small. Another related numerical issue that we observed, the condition number of the matrix at the left-hand side of \eqref{eq:update_formula_approach1} grows large as the number of iterations increases.
	
	
	\item Secondly, evaluating inner product \eqref{eq:inner_product} is now less trivial. An alternative would be: rather than using the coefficients of the Chebyshev expansion, use the coefficients of the polynomial (expressed in the scaled Chebyshev basis) that interpolates the function in a  number of scaled Chebyshev points. If the number of interpolation points is sufficiently large, then the coefficients of this interpolating polynomial are a good approximation for the coefficients in the Chebyshev expansion  \cite{Trefethen2013}. The coefficients of this interpolating polynomial can be obtained using the \texttt{(i)fft}-transformation at a cost of $\mathcal{O}\Big(2n\, N_{points} \log(N_{points}) \Big)$ with $N_{points}$ the number of discretisation points \cite{Ahmed1968}.
	
	\item Finally, when using the representation \eqref{eq:preserverd_structure}, the simultaneous orthogonalisation with respect to the inner product \eqref{eq:inner_product} and bilinear form $\bilinear{\cdot}{J\cdot}$ typically destroys the structure of $\phi_{i+1}$. 
\end{enumerate}
The three difficulties mentioned above render the representation used in \Cref{theorem:non_zero_shift} unattractive in practice. Next, we therefore introduce a different approach to implement \Cref{alg:structure_preserving_shift_invert_arnoldi} for $\sigma\neq 0$ using finite-dimensional operations. Similarly as in Chebfun \cite{Trefethen2013}, we approximate the functions in $X$ using polynomial approximations whose degree is adaptively chosen such that they match the original functions up to machine precision. More specifically, a function $\psi$ can be approximated by the unique interpolating polynomial $\widehat{\psi}$ of degree $N_d$ that interpolates $\psi$ in the extreme points of the $N_d$\textsuperscript{th} Chebyshev polynomial of the first kind  rescaled to the interval $[-\tau_K,\tau_K]$: 
\begin{equation}
\label{eq:cheb_approximation_form}
\textstyle
\widehat{\psi}(\theta) = \sum\limits_{l=0}^{N_d} c_{l} T_l\Big(\frac{\theta}{\tau_K}\Big) \text{ such that } \widehat{\psi}(\theta_l) = \psi(\theta_l) \text{ with } \theta_l = \tau_K\cos\big(\frac{l\pi}{N_d}\big) \text{ for } l=0,\dots,N_d,
\end{equation}
with $N_d$ chosen such that the approximation error is sufficiently small.  As mentioned before, such an interpolating polynomial can be computed efficiently using the \texttt{(i)fft}-transformation.

Previously, by invoking \Cref{theorem:zero_shift,theorem:non_zero_shift}, $\mappedoperator^{-1}\phi_i$ could be computed efficiently due to the chosen structure of $\phi_1$ (which was preserved in $\phi_i$). More specifically, for large $n$, the main computation cost of the extension step consisted of solving a system involving a particular instance of the characteristic matrix \eqref{eq:characteristic_matrix}. However, for arbitrary (non-structured) functions $\phi_i$ computing $\mappedoperator^{-1}\phi_i$ involves solving an ordinary differential equation subjected to conditions \eqref{eq:structure_preserving_operator_D1}-\eqref{eq:structure_preserving_operator_D2}. A finite-dimensional approximate solution for this differential equation can be obtained using spectral discretisation. This would however involve solving a system of dimension $2nN_d$ with $N_d$ a sufficiently large number of discretisation points. To avoid having to solve a system with such dimensions, we will use the approach depicted in \Cref{fig:structure_approach_non_zero_shift}, which is based on explicitly using the  facotriazation $\mappedoperator^{-1}=(\operator+\sigma \mathcal{I}_X)^{-1}  (\operator-\sigma \mathcal{I}_X)^{-1}$. First, $\phi_i(\cdot)$ is approximated by the function $\chi_i(\cdot) e^{\sigma\cdot}$ in which $\chi_i(\cdot)$ is the interpolating polynomial of form \eqref{eq:cheb_approximation_form} of $\phi_i(\cdot)e^{-\sigma \cdot}$. Next one can solve $(\operator-\sigma \mathcal{I}_{X})^{-1}\chi_i(\cdot) e^{\sigma\cdot}$ analytically. More specifically, $(\operator-\sigma \mathcal{I}_{X})^{-1}\chi_i(\cdot) e^{\sigma\cdot}$ is equal to $\xi_i(\cdot) e^{\sigma \cdot}$ with $\xi_i(\cdot)$ a vector-valued polynomial whose degree is equal to one plus the degree of $\chi_i(\cdot)$. Using the notation
\[
\chi_{i}(\theta) = \sum\limits_{l=0}^{N_1} a_l T_l\left(\frac{\theta}{\tau_K}\right) \text{ and } \xi_{i}(\theta) = \sum\limits_{l=0}^{N_{1}+1} b_l T_l\left(\frac{\theta}{\tau_K}\right)
\]
the coefficients of $\xi_i(\cdot)$ are given by
\begin{equation}
\label{eq:update_non_zero_1a}
\begin{bmatrix}
b_1 & \dots & b_{N_{1}+1}\end{bmatrix} = \begin{bmatrix}
a_0 & \dots & a_{N_{1}}\end{bmatrix}  \tau_K \begin{bmatrix}
1 & &  & & \\
& \frac{1}{4} & & & \\
\frac{-1}{2} & & \frac{1}{6} & & \\
& \ddots & & \ddots & \\
& \multicolumn{2}{c}{\frac{-1}{2(N_{1}-1)}} & \multicolumn{2}{l}{\frac{1}{2(N_{1}+1)}}
\end{bmatrix}
\end{equation}
and 	
\begin{multline}
\label{eq:update_non_zero_1b}
M(\sigma)\ b_0= -\chi_i(0)+(H_0-\sigma I_n) \sum_{l=1}^{N_1+1} b_l T_l(0) + \\ \sum_{k=1}^{K}\left(H_k e^{\sigma \tau_k} \sum_{l=1}^{N_1+1} b_l T_l\left(\frac{\tau_k}{\tau_K}\right)+ H_{-k} e^{-\sigma \tau_k}\sum_{l=1}^{N_1+1} b_l T_l\left(-\frac{\tau_k}{\tau_K}\right)\right).
\end{multline}
with $M(\cdot)$ the characteristic matrix as defined in \eqref{eq:characteristic_matrix}. As before, one now only has to solve a system of dimension $2n$. Notice that these expressions are similar to those in the extension step of the standard infinite Arnoldi method from \cite{Jarlebring2010}. Subsequently, we approximate $\xi_{i}(\cdot)e^{\sigma \cdot}$ by $\zeta_i(\cdot) e^{-\sigma \cdot}$ with  $\zeta_i(\cdot)$ the interpolating polynomial of form \eqref{eq:cheb_approximation_form} of $ \xi_i(\cdot)e^{2\sigma\cdot}$. In step \textbf{(IV)} we compute $\left(\mathcal{H}+\sigma \mathcal{I}_X\right)^{-1}\zeta_i(\cdot) e^{-\sigma \cdot}$ for which we can again derive an analytical expression. More specifically, $\left(\mathcal{H}+\sigma \mathcal{I}_X\right)^{-1}\zeta_i(\cdot) e^{-\sigma \cdot}$ is equal to $\Upsilon_{i}(\cdot) e^{-\sigma \cdot}$ with $\Upsilon_{i}(\theta) =  \sum\limits_{l=0}^{N_{2}+1} d_l T_l\left(\frac{\theta}{\tau_K}\right)$ a vector-valued polynomial whose coefficients follow from those of 
$\zeta_{i}(\theta) = \sum\limits_{l=0}^{N_{2}} c_l T_l\left(\frac{\theta}{\tau_K}\right)$ by the following relations
\begin{equation}
\label{eq:update_non_zero_2a}
\begin{bmatrix}
d_1 & \dots & d_{N_{2}+1}\end{bmatrix} = \begin{bmatrix}
c_0 & \dots & c_{N_{2}}\end{bmatrix} \tau_K \begin{bmatrix}
1 & &  & & \\
& \frac{1}{4} & & & \\
\frac{-1}{2} & & \frac{1}{6} & & \\
& \ddots & & \ddots & \\
& \multicolumn{2}{c}{\frac{-1}{2(N_{d,2}-1)}} & \multicolumn{2}{l}{\frac{1}{2(N_{d,2}+1)}}
\end{bmatrix}
\end{equation}
and
\begin{multline}
\label{eq:update_non_zero_2b}
M(-\sigma)\ d_0=-\zeta_i(0)+(H_0+\sigma I_n) \sum_{l=1}^{N_{2}+1} d_l T_l(0) + \\ \sum_{k=1}^{K}\left( H_k e^{-\sigma \tau_k} \sum_{l=1}^{N_{2}+1} d_l T_l\left(\frac{\tau_k}{\tau_K}\right)+ H_{-k} e^{\sigma \tau_k}\sum_{l=1}^{N_{2}+1} d_l T_l\left(-\frac{\tau_k}{\tau_K}\right)\right).
\end{multline}
Finally, $\widehat{\varphi}_{i+1}$ is obtained by computing the interpolating polynomial of form \eqref{eq:cheb_approximation_form} of $\Upsilon_{i}(\cdot)e^{-\sigma \cdot}$. If the interpolating polynomials in steps \textbf{(I)}, \textbf{(III)} and \textbf{(V)} are computed up to machine precision, the resulting $\widehat{\varphi}_{i+1}$ is an accurate approximation for $\varphi_{i+1}=\mappedoperator^{-1}\phi_i$.

\begin{figure}[!h]
	\centering
	\begin{tikzpicture}[scale=0.88, every node/.style={scale=0.88}]
	\node[] at (-3.75,4) (step1) {\hspace{-2cm}$\phi_i(\theta) = \sum\limits_{l=0}^{N_{\phi_i}} q_l^{(i)} T_l\left(\frac{\theta}{\tau_K}\right) $};
	\node[] at (-3.75,0) (step2) {$\chi_{i}(\theta) = \sum\limits_{l=0}^{N_1} a_l T_l\left(\frac{\theta}{\tau_K}\right)$};
	\node[] at (-3.75,-3.75) (step3) {\hspace*{-5em}$\xi_{i}(\theta) = \sum\limits_{l=0}^{N_{1}+1} b_l T_l\left(\frac{\theta}{\tau_K}\right)$};
	\node[] at (3.75,-3.75) (step4) {$\zeta_{i}(\theta) = \sum\limits_{l=0}^{N_{2}} c_l T_l\left(\frac{\theta}{\tau_K}\right)$\hspace*{-6em}};
	\node[] at (3.75,0) (step5) {$\Upsilon_{i}(\theta) = \sum\limits_{l=0}^{N_{2}+1} d_l T_l\left(\frac{\theta}{\tau_K}\right)$\hspace{6em}};
	\node[] at (3.75,4) (step6) {$\widehat{\varphi}_{i+1}(\theta) = \sum\limits_{l=0}^{N_{\varphi_{i+1}}} v_l^{(i+1)} T_l\left(\frac{\theta}{\tau_K}\right)$\hspace*{-2cm}};
	\node[draw,dashed,very thick,black!30,label=above:{Representation as polynomial},fit={(step1.north west) (step6.south east)},rounded corners=0.3cm,inner xsep=26mm,inner ysep = 3mm]  (box1) {};
	\node[draw,dashed,very thick,black!30,label=below:{Representation as polynomial times $e^{\sigma \theta}$},fit={(step2.north west) (step3.south east-|step2.east)},rounded corners=0.3cm,inner xsep=17.5mm,inner ysep = 3mm]  (box2) {};
	\node[draw,dashed,very thick,black!30,label=below:{Representation as polynomial times $e^{-\sigma \theta}$},fit={(step5.north west) (step4.south east-|step5.east)},rounded corners=0.3cm,inner xsep=19mm,inner ysep = 3mm]  (box3) {};
	\draw[-stealth,line width=1pt,black] (step1) -- (step2) node[midway,text width=5cm,fill=white,align=center]{\textbf{(I)} Compute the interpolating polynomial $\chi_i(\cdot)$ of $\phi_i(\cdot)e^{-\sigma \cdot}$ s.t. $\phi_i(\cdot) \approx \chi_i(\cdot) e^{\sigma \cdot}$};
	\draw[-stealth,line width=1pt] (step2) -- (step3.north-|step2) node[midway,fill=white,align=center] {\textbf{(II)} Solve $\left(\operator - \sigma \mathcal{I}_{X}\right)\left(\xi_i(\cdot)e^{\sigma \cdot}\right) = \chi_i(\cdot)e^{\sigma \cdot} $ \\ using \eqref{eq:update_non_zero_1a}-\eqref{eq:update_non_zero_1b} s.t. \\ $\xi_i(\cdot)e^{\sigma \cdot} \approx \left(\operator - \sigma \mathcal{I}_{X}\right)^{-1} \phi_i(\cdot) $};
	\draw[-stealth,line width=1pt] (step3) -- (step4) node[midway,fill=white,align=center] {\textbf{(III)} Compute the interpolating\\ polynomial $\zeta_i(\cdot)$ of $\xi_{i}(\cdot)e^{2\sigma\cdot}$ \\ s.t.  $\zeta_i(\cdot)e^{-\sigma \cdot} \approx \xi_{i}(\cdot) e^{\sigma \cdot}$};
	\draw[-stealth,line width=1pt] (step4.north-| step5) -- (step5) node[midway,fill=white,align=center] {\textbf{(IV)} Solve $\left(\operator + \sigma \mathcal{I}_{X}\right)\left(\Upsilon_i(\cdot)e^{\negative\sigma \cdot}\right)=\zeta_i(\cdot)e^{\negative\sigma \cdot} $\\using \eqref{eq:update_non_zero_2a}-\eqref{eq:update_non_zero_2b} s.t. \\ $\begin{aligned}\Upsilon_i(\cdot)e^{-\sigma \cdot} \approx& (\operator+\mathcal{I}_X\sigma)^{-1}\left(\xi_i(\cdot)e^{\sigma \cdot}\right) \\ \approx&  (\operator+\mathcal{I}_X\sigma)^{-1}(\operator-\mathcal{I}_X\sigma)^{-1}\phi_i(\cdot)\end{aligned}$};
	\draw[-stealth,line width=1pt] (step5) -- (step6) node[midway,fill=white,text width=6cm,align=center] {\textbf{(V)} Compute the interpolating polynomial $\widehat{\varphi}_{i+1}(\cdot)$ of $\Upsilon_i(\cdot)e^{-\sigma \cdot}$ s.t. \\ $\widehat{\varphi}_{i+1} \approx (\operator+\mathcal{I}_X\sigma)^{-1}(\operator-\mathcal{I}_X\sigma)^{-1}\phi_i$};
	\draw[-stealth,line width=1pt] (step6) -- (step1) node[midway,align=center,fill=white] {\textbf{Orthogonalise} \\ 
		wrt \eqref{eq:inner_product} and $\bilinear{\cdot}{J\cdot}$ \\ $i\leftarrow i+1$};
	
	\end{tikzpicture}
	\caption{Structured representation of the used numerical implementation of \Cref{alg:structure_preserving_shift_invert_arnoldi} for $\sigma \neq 0$.}
	\label{fig:structure_approach_non_zero_shift}
\end{figure}
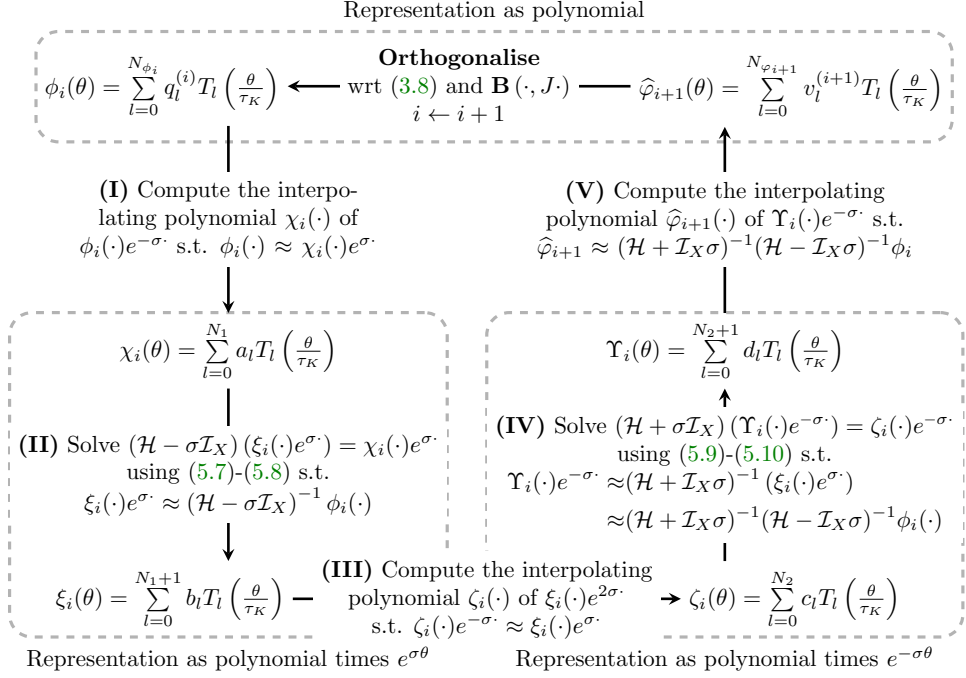

Using this polynomial representation expressed in the Chebyshev basis, also the orthogonalisation step can be implemented with finite-dimensional operations. More specifically, by using the representation of \Cref{fig:structure_approach_non_zero_shift} for  $\phi_1,\dots,\phi_{i}$ and $\hat{\varphi}_{i+1}$, we can use an  orthogonalisation procedure similar to that in \eqref{eq:orthogonalisation_zero} to ensure orthogonality with respect to both the inner product \eqref{eq:inner_product} and bilinear form $\bilinear{\cdot}{J\cdot}$: instead of padding the vectors to length $2n(N_{i+1}+1)$, the vectors now must be padded to length $2n\Big(\max\{ \max_{j=1,..,i} \{N_{\phi_j}\}, N_{\hat{\varphi}_{i+1}}\}+1\Big)$.

A disadvantage of the approach introduced above is that, in contrast to \Cref{theorem:zero_shift,theorem:non_zero_shift}, the degree of the polynomial $\hat{\varphi}_{i+1}$ is not known beforehand and therefore might grow large. However, the example in \Cref{subsec:example2} shows that if $\sigma$ is sufficiently small, the degree of these functions only grows slowly. For large $\sigma$ this is often no longer the case as the sought for eigenfunctions are either highly oscillator (large imaginary component) or fast growing exponentials (large real component). A polynomial representation inherently requires a high degree to accurately approximate such functions. 
\section{Numerical illustration}
\label{sec:numerical_illustration}
In this section we consider two examples. The first example illustrates the importance of the chosen shift-invert transformation and of the explicit orthogonalisation of basis functions of the Krylov subspace with respect to $\bilinear{\cdot}{J\cdot}$ for computing purely imaginary eigenvalues of \eqref{eq:nlevp} in the presence of rounding errors. The second example shows the convergence behavior of the presented method for a large-scale example. oth experiments are carried out in \textsc{Matlab} R2022a (version 9.12.0.1884302) on a Dell Latitude 7490 with a Intel(R) Core(TM) i7-8650U CPU @ 1.90GHz eight core processor with 16GB of RAM running Ubuntu 20.04.4 LTS. The code for these experiments is available from \url{https://twr.cs.kuleuven.be/research/software/delay-control/SPSIIA/index.html}.
\subsection{Example 1}
\label{subsec:example1}
Consider the following characteristic matrix of form \eqref{eq:characteristic_matrix}:
\begin{equation}
\label{eq:example1}
\lambda
\begin{bmatrix}
1 & 0 \\
0 & 1
\end{bmatrix}
- \begin{bmatrix}
10 & 0.1 \\
c_0  & -10
\end{bmatrix} -
\begin{bmatrix}
a_1 & 0 \\
0 & 0
\end{bmatrix} e^{-\lambda}
- \begin{bmatrix}
0 &  0 \\
0 & -a_1
\end{bmatrix} e^{\lambda},
\end{equation}
with $a_1 = (3\pi^2/4)/(20+\pi)$ and $c_0 = -1000-10a_1^2-10a_1\pi-\frac{5\pi^2}{2}$,
which has purely imaginary eigenvalues at $\jmath\pi$ and $\frac{\jmath\pi}{2}$. \Cref{table:example1,table:example1b} compare three methods for computing the purely imaginary eigenvalues of the associated NLEVP using $\sigma=0$ and $\sigma = \jmath\frac{3\pi}{4}$, respectively. These three methods are:
\begin{enumerate}
	\item applying the infinite Arnoldi method to $\operator^{-1}$ and $\left(\operator-\sigma \mathcal{I}_{X}\right)^{-1}$, respectively, using a modification of the algorithm from \cite{Jarlebring2010} to deal with both positive and negative delays;
	\item applying the infinite Arnoldi method to $\mathcal{R}_0^{-1}$ and $\mappedoperator^{-1}$ using the method presented in \Cref{sec:shift_zero} and \Cref{sec:non_zero_shift} (\Cref{fig:structure_approach_non_zero_shift}), respectively, but without explicit orthogonalisation with respect to $\bilinear{\cdot}{J\cdot}$ (\ie{} using \eqref{eq:orthogonalisation_naive} instead of \eqref{eq:orthogonalisation_zero});
	\item applying the infinite Arnoldi method to $\mathcal{R}_0^{-1}$ and $\mappedoperator^{-1}$ using the method presented in \Cref{sec:shift_zero} and \Cref{sec:non_zero_shift} (\Cref{fig:structure_approach_non_zero_shift}), respectively, with explicit orthogonalisation with respect to $\bilinear{\cdot}{J\cdot}$.
\end{enumerate} As initial function the constant function $\begin{bmatrix}
0.6 & 0.8
\end{bmatrix}^{\top}$ is used in all cases but one. When applying the infinite Arnoldi method to $\left(\operator-\sigma \mathcal{I}_{X}\right)^{-1}$, the function $\begin{bmatrix}
0.6 & 0.8
\end{bmatrix}^{\top}e^{\sigma \theta}$ is employed, to be able to use the compact representation from \cite{Jarlebring2010} for the functions in the Krylov subspace.

For the first approach, we observe from \Cref{table:example1,table:example1b} that the obtained approximations have a significant non-zero real part, as this method does not explicitly take into account the symmetry of the spectrum with respect to the imaginary axis. For the second approach, we see that each purely imaginary eigenvalue of the NLEVP appears twice in the table. Recall that the eigenvalues $\pm\jmath\omega$ of $\operator$ are mapped to the real eigenvalue $\mu = \frac{1}{-\omega^2-\sigma^2}$ of $\mappedoperator^{-1}$ with multiplicity 2. The eigenspace associated with $\mu$ is given by $\mathcal{L}=\Span\{\varphi_{+},\varphi_{-}\}$, with $\varphi_{+}$ and $\varphi_{-}$ the eigenfunctions of $\mathcal{H}$ associated with $\jmath\omega$ and $-\jmath\omega$, respectively. Although in exact arithmetic we expect that only one linear independent component of $\mathcal{L}$ will be approximated by the Krylov subspace (a consequence of \Cref{theorem:uniqueness}), this is typically no longer the case when working in finite precision as orthogonality with respect to $\bilinear{\cdot}{J\cdot}$ is lost due to rounding error. As a consequence, the Krylov subspace will eventually also approximate a second linear independent component in the eigenspace $\mathcal{L}$, meaning the double eigenvalue $\mu$ of $\mappedoperator^{-1}$ will be approximated by a pair of Ritz values. One either gets two nearby real Ritz values or a pair of complex conjugate Ritz values. In the latter case, after re-transformation the obtained approximations for purely imaginary eigenvalues of $\mathcal{H}$ thus have a small real part. When, however, explicit orthogonalisation against $\bilinear{\cdot}{J\cdot}$ is preformed (approach 3), the Krylov subspace satisfies the condition in \Cref{theorem:J_neutrality} up to machine precision. As to be expected, the approximations for the purely imaginary eigenvalues are now purely imaginary.

Finally, from \Cref{table:example1b}, we observe that when using the shift-invert transformation $\left(\operator-\sigma \mathcal{I}_{X}\right)^{-1}$, the eigenvalues $\jmath\omega$ and $-\jmath\omega$ are approximated separately. In contrast, when using $\mappedoperator^{-1}$ this pair is really approximated as a pair.
\begin{table}[!h]
	\caption{Obtained approximations after 21 iterations for the purely imaginary eigenvalues of \eqref{eq:example1} using approaches 1-3 for $\sigma = 0$ with the constant function $\begin{bmatrix}
		0.6 & 0.8
		\end{bmatrix}^{\top}$ as initial function.}
	\label{table:example1}
	\centering
	\resizebox{\textwidth}{!}{
		\begin{tabular}{c|c|c}
			Approach 1 & Approach 2 & Approach 3   \\
			\hline & & \\[-1.5ex]
			$-2.525\times 10^{-11} - \jmath\underline{1.570796326}838293$ & $\phantom{-}1.444\times 10^{-12}-\jmath \underline{1.570796326794}669$ &   $\phantom{-}\jmath\underline{1.5707963267}50096$\\
			$-2.525\times10^{-11} + \jmath\underline{1.570796326}838293$ & $\phantom{-}1.444\times 10^{-12}+\jmath \underline{1.570796326794}669$ & $-\jmath\underline{1.5707963267}50096$\\
			$-5.141\times10^{-9} - \jmath \underline{3.141592653}006948$& $-1.444\times 10^{-12}-\jmath \underline{1.570796326794}669$ &  $\phantom{-}\jmath \underline{3.141592653}831962$\\
			$-5.141\times10^{-9} + \jmath \underline{3.141592653}006948$& $-1.444\times 10^{-12}+\jmath \underline{1.570796326794}669$ &$-\jmath\underline{3.141592653}831962$ \\
			& $-\jmath \underline{3.1415926535}91730$ & \\
			& $\phantom{-}\jmath \underline{3.1415926535}91730$ & \\
			& $-\jmath \underline{3.141592653}286507$& \\
			&$\phantom{-}\jmath \underline{3.141592653}286507$ & 
	\end{tabular}}
\end{table}
\begin{table}[!h]
	\caption{Obtained approximations after 21 iterations for the purely imaginary eigenvalues of \eqref{eq:example1} using approaches 1-3 for $\sigma = \jmath\frac{3\pi}{4}$. For the approach based on $\left(\operator-\sigma I_X\right)^{-1}$ the initial function $\begin{bmatrix}
		0.6 & 0.8
		\end{bmatrix}^{\top}e^{\sigma \theta}$ is used, while for the approaches based on $\mappedoperator^{-1}$ the initial function $\begin{bmatrix}
		0.6 & 0.8
		\end{bmatrix}^{\top}$ is used.}
	\label{table:example1b}
	\centering
	\resizebox{\textwidth}{!}{
		\begin{tabular}{c|c|c}
			Approach 1 & Approach 2 & Approach 3  \\ 
			\hline & & \\[-1.5ex]
			$\phantom{-}3.005\times 10^{-13} + \jmath \underline{1.57079632679}2780$ & $8.260\times 10^{-13} - \jmath \underline{1.570796326794}206$&  $-\jmath \underline{1.570796326}657866$ \\
			$-3.238\times 10^{-13} + \jmath \underline{3.1415926535}90773$ & $8.260\times 10^{-13} + \jmath \underline{1.570796326794}206$ & $\phantom{-}\jmath \underline{1.570796326}657866$\\
			$\phantom{-}1.570\times 10^{-6} - \jmath \underline{1.57070}8037662637$ & $-8.260\times 10^{-13} - \jmath \underline{1.570796326794}206$ & $-\jmath \underline{3.141592653}619107$ \\
			$-5.788\times 10^{-4} - \jmath \underline{3.14}0992633763653$ & $-8.260\times 10^{-12} + \jmath \underline{1.570796326794}206$ & $\phantom{-}\jmath \underline{3.141592653}619107$\\
			& $-\jmath \underline{3.1415926535}91559$ & \\
			& $\phantom{-}\jmath \underline{3.1415926535}91559$ & \\
			& $-\jmath \underline{3.141592653589}887$ &  \\
			& $\phantom{-}\jmath \underline{3.141592653589}887$ & 
	\end{tabular}}
\end{table}
\subsection{Example 2}
\label{subsec:example2}
For the second example we consider the following dynamical system from \cite{Michiels2008}, which describes a heated rod which is cooled using delayed feedback. The evolution of the temperature in the rod, $v$, is governed by the partial differential equation
\[
\frac{\partial v(x,t)}{\partial t} = \frac{\partial^2 v(x,t)}{\partial x^2} + a_0(x) v(x,t) + a_1(x)v(\pi-x,t-1) \text{ for } x\in[0,\pi],
\]
with $a_0(x) = - 2\sin(x)$, $a_1(x) = 2\sin(x)$ and $v(0,t)=v(\pi,t) = 0$. Discretising this partial differential equation in the space coordinate $x$ results in a system of delay-differential equations of dimension $n$. To obtain a dynamical system of form \eqref{eq:dyn_sys}, we define a performance output matrix $C=\frac{1}{n}\begin{bmatrix}
1 & \dots & 1
\end{bmatrix}$ (which gives the average temperature of the rod) and a performance input matrix $B = C^{\top}$. Choosing $n=1000$ and plotting the singular value of the transfer matrix in function of $\omega$, one observes that this singular value is equal to $0.00018$ for $\omega$ approximately equal to $2.009437$, $3.790888$ and $5.571120$.  It now follows from the discussion in the introduction that for $\gamma = 0.00018$, the NLEVP associated with \eqref{eq:nlevp_hinf} must have purely imaginary eigenvalues around $\jmath 2.009437$, $\jmath 3.790888$ and $\jmath 5.571120$.

To verify this, we first use the method presented in \Cref{sec:shift_zero} to compute the eigenvalues of \eqref{eq:nlevp_hinf} near the shift $\sigma = 0$. \Cref{fig:example2_omega0} shows the obtained approximations for the eigenvalues in the region $[-6,6] \times \jmath[-10,10]$ after 70 iterations and their convergence behavior. One observes that the approximations converge quickly to the eigenvalues of \eqref{eq:nlevp_hinf} close to the origin.

Next, we apply the method from \Cref{sec:non_zero_shift}, with implementation sketched in \Cref{fig:structure_approach_non_zero_shift}, for $\sigma = \jmath 4.5$. \Cref{fig:example2_omega45} shows the eigenvalues and the obtained approximations near this shift after 70 iterations. We have again fast convergence to the eigenvalues near the shift. 

Finally, recall that the degree of the polynomial $\hat{\varphi}_{i+1}$, defined in \Cref{fig:structure_approach_non_zero_shift}, is not known beforehand. \Cref{fig:example2_evolution_degree} therefore shows the evolution of the degree of this polynomial with respect to the iteration number. We observe that the degree of these polynomials grows slowly with respect to the number of iterations.

\begin{figure}[!h]
	\subfloat[Eigenvalues (crosses) and their approximations (circles) near $\sigma = 0$.]{
		\includegraphics[width=0.48\linewidth]{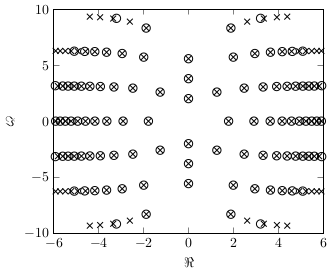}	
	}
	\subfloat[Relative forward error]{
		\includegraphics[width=0.48\linewidth]{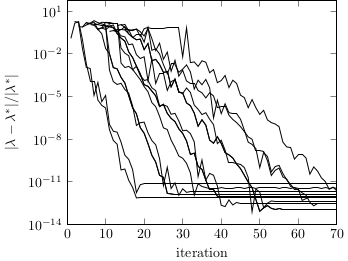}
	}
	\caption{Approximated eigenvalues near the shift $\sigma = 0$ after 70 iterations and their convergence behavior for Example~2 obtained using the method from \Cref{sec:shift_zero}.}
	\label{fig:example2_omega0}
\end{figure}
\begin{figure}[!h]
	\subfloat[Eigenvalues (crosses) and their approximations (circles) near $\sigma = \jmath4.5$.]{
		\includegraphics[width=0.48\linewidth]{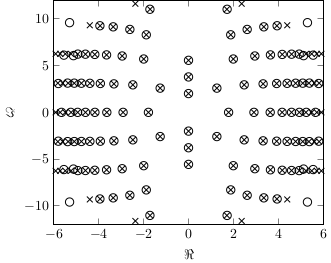}
	}
	\subfloat[Relative forward error]{
		\includegraphics[width=0.48\linewidth]{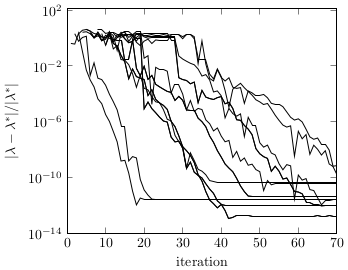}
	}
	\caption{Approximated eigenvalues near the shift $\sigma = \jmath4.5$ after 70 iterations and their convergence behavior for Example~2 obtained using the method from \Cref{sec:non_zero_shift}, as depicted in \Cref{fig:structure_approach_non_zero_shift}.}
	\label{fig:example2_omega45}
\end{figure}
\begin{figure}[!h]
	\centering
	\begin{tikzpicture}
	\begin{axis}[xlabel = $i$,
	ylabel = degree$(\widehat{\varphi}_{i+1})$,
	xmin=1,
	xmax=71,
	ymin=0,
	width=0.6\textwidth
	]
	\addplot[mark=x,draw=black]%
	table[x index=0,y index=1]{Results/example2bdegree.txt} ;
	\end{axis}
	\end{tikzpicture}
	\caption{Evolution of the degree of $\widehat{\varphi}_{i+1}$, as defined in \Cref{fig:structure_approach_non_zero_shift}, as function of the iteration number~$i$.}
	\label{fig:example2_evolution_degree}
\end{figure}
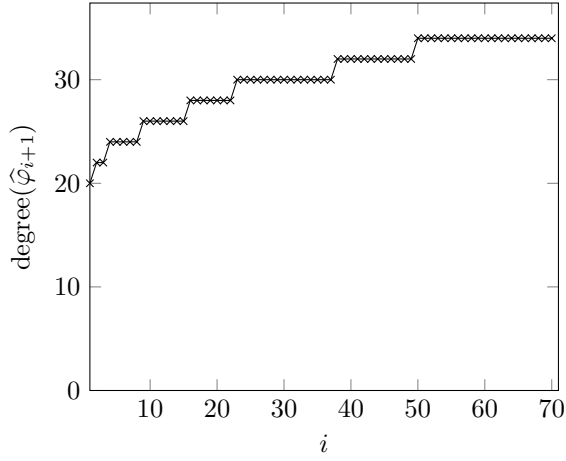
\section{Conclusions and outlook}
\label{sec:conclusions}
In this work we presented an iterative method to approximate the eigenvalues of NLEVPs of form \eqref{eq:nlevp} closest to a given shift $\sigma$ while preserving the symmetries of the spectrum. The presented work can thus be seen as a generalization of the results from \cite{Mehrmann2001,Mehrmann2012} to a class of NLEVPs. 

To derive this method, the equivalence between the considered NLEVP and a linear but infinite-dimensional eigenvalue problem was used. Based on this equivalence, we introduced a shift-invert transformation that preserves the Hamiltonian structure of the spectrum. Next, the ideas behind the infinite Arnoldi method from \cite{Jarlebring2010}, which operates on functions rather than on vectors, were applied to this transformed eigenvalue problem to construct a Krylov subspace. It was then shown that this subspace is orthogonal with respect to the bilinear functional $\bilinear{\cdot}{J\cdot}$. This result was subsequently used to demonstrate that simple purely imaginary eigenvalues of \eqref{eq:nlevp} close to the shift are generally approximated by purely imaginary eigenvalues. Although this method was initially defined on function spaces, \Cref{sec:shift_zero,sec:non_zero_shift} showed how it can be implemented using finite-dimensional linear algebra operations. The performance of these numerical algorithms was finally verified using two numerical experiments in \Cref{sec:numerical_illustration}.

To conclude this paper we give some directions for future research.	Firstly, as mentioned in the introduction, the presented algorithm can be used as a building block for algorithms that compute the \hinfnorm{} of time-delay systems. Secondly, a more extensive study on the effect of the chosen initial function and the chosen inner product on the convergence behavior of the method is necessary. Thirdly, for large $\sigma$ the representation in \eqref{eq:preserverd_structure} has as advantage that it might require polynomials of a lower degree to approximate the eigenfunctions of nearby eigenvalues in comparison to purely polynomial approximations. The functions $e^{\jmath \omega \theta}$ and $e^{-\jmath\omega \theta}$ act in this case as carrier functions. However, as mentioned before, it is not yet clear how this representation can be used in practice because of problems with numerical stability. Finally, for the infinite Arnoldi method in \cite{Jarlebring2010}, the convergence behavior of the method can be related to the approximation error of a Padé-approximation of the DEVP with growing degree on the one hand and the convergence behavior of the finite-dimensional Arnoldi method on the other hand. Such a connection is yet to be established for the method presented here.

\appendix

\section{Proof that $\bilinear{\cdot}{J\cdot}$ is anti-symmetric}
\label{appendix:anti_symmetry_bilinear_J_form}
\begin{lemma}
	The bilinear form $\bilinear{\cdot}{J\cdot}$ is anti-symmetric, this means that the equality $\bilinear{\varphi}{J\psi} = - \bilinear{\psi}{J\varphi}$ holds.
\end{lemma}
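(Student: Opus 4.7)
The plan is to verify the identity by a direct term-by-term comparison after expanding both sides of $\bilinear{\varphi}{J\psi} = -\bilinear{\psi}{J\varphi}$ using the definition \eqref{eq:bilinear_form}. The three algebraic ingredients I will use are: (a) $J^{\top} = -J$; (b) the Hamiltonian-type relations $(JH_{-k})^{\top} = JH_{k}$ from \Cref{assumption:hamiltonian_structure}, which by transposition also gives $(JH_{k})^{\top} = JH_{-k}$; and (c) the trivial observation that a scalar equals its own transpose.

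First I would rewrite $\bilinear{\varphi}{J\psi}$ by pushing the $J$ into each occurrence of $\psi$ via $J^{\top}=-J$, producing an overall minus sign on the boundary term and on each integrand of the form $\psi(\cdot)^{\top} J H_{\pm k} \varphi(\cdot \pm \tau_k)$. The boundary term becomes $-\psi(0)^{\top} J \varphi(0)$, which by transposition and $J^{\top}=-J$ equals $\varphi(0)^{\top} J \psi(0)$; together with the corresponding boundary term $-\varphi(0)^{\top} J \psi(0)$ appearing in $\bilinear{\psi}{J\varphi}$, this cancels exactly. That handles the finite-dimensional piece.

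Next I would treat the integral contributions. For a typical integrand such as $\psi(\theta)^{\top} J H_{-k} \varphi(\theta-\tau_k)$, I transpose it (as a scalar) to obtain $-\varphi(\theta-\tau_k)^{\top} H_{-k}^{\top} J \psi(\theta)$, and then invoke (b) in the form $-H_{-k}^{\top} J = J H_{k}$ to rewrite this as $\varphi(\theta-\tau_k)^{\top} J H_{k} \psi(\theta)$. An identical manipulation applied to $\psi(\theta-\tau_k)^{\top} J H_{k}\varphi(\theta)$ turns it into $\varphi(\theta)^{\top} J H_{-k} \psi(\theta-\tau_k)$. After these rewrites, each integral appearing in $\bilinear{\varphi}{J\psi}$ matches, up to an overall sign, exactly one integral in $\bilinear{\psi}{J\varphi}$, and pairing them up term by term yields cancellation of the sum.

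There is no real obstacle here; the proof is purely bookkeeping. The only point to be careful about is to keep the signs straight when combining $J^{\top}=-J$ with the Hamiltonian relations, and to make sure the substitution $-H_{-k}^{\top}J = J H_{k}$ (respectively $-H_{k}^{\top} J = J H_{-k}$) is applied with the correct index on each of the two integrals in the sum, so that the $H_{-k}$-integral from $\bilinear{\varphi}{J\psi}$ cancels the $H_{k}$-integral from $\bilinear{\psi}{J\varphi}$ (and vice versa) rather than pairing incorrectly.
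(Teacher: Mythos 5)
Your proposal is correct and follows essentially the same route as the paper's own proof: expand both sides via \eqref{eq:bilinear_form}, transpose each scalar term, and combine $J^{\top}=-J$ with $(JH_{-k})^{\top}=JH_{k}$ (equivalently $-H_{-k}^{\top}J=JH_{k}$) so that the $H_{-k}$-integral of one side pairs with the $H_{k}$-integral of the other. The sign bookkeeping you describe is exactly what the paper carries out, so nothing further is needed.
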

\begin{proof}
{\footnotesize
\begin{align*}
\bilinear{\varphi}{J\psi} &= \textstyle - \Big[\psi(0)^{\top}J\varphi(0) + \sum\limits_{k=1}^{K} \Big(\int\limits_{0}^{\tau_k} \psi(\theta)^{\top}JH_{\negative k} \varphi(\theta-\tau_k) \dd\theta - \int\limits_0^{\tau_k} \psi(\theta-\tau_k)^{\top} JH_k \varphi(\theta) \dd\theta \Big) \Big] \\
&= \textstyle - \Big[\varphi(0)^{\top}J^{\top}\psi(0) +\! \sum\limits_{k=1}^{K}\! \Big(\int\limits_{0}^{\tau_k} \varphi(\theta-\tau_k)^{\top}\!(JH_{\negative k})^{\top}\! \psi(\theta) \dd\theta -\! \int\limits_0^{\tau_k}  \varphi(\theta)^{\top}\!(JH_k)^{\top}\! \psi(\theta-\tau_k) \dd\theta \Big)\Big] \\
&= \textstyle \Big[\varphi(0)^{\top}\!J\!\psi(0) + \sum\limits_{k=1}^{K} \Big( \int\limits_0^{\tau_k}  \varphi(\theta)^{\top} JH_{\negative k} \psi(\theta-\tau_k) \dd\theta -  \int\limits_{0}^{\tau_k} \varphi(\theta-\tau_k)^{\top} JH_{k} \psi(\theta) \dd\theta \Big)\Big] \\
&= - \bilinear{ \psi}{ J\varphi}
\end{align*}}%
\end{proof}

\section{Derivation of the extension step for $\sigma = 0$}
\label{appendix:extension_step_sigma0}
Let $\phi_i(\theta)$ and $\varphi_{i+1}(\theta)$ be as defined in \Cref{theorem:zero_shift}, then the equality $\mathcal{R}_0 \varphi_{i+1} = \phi_{i}$ becomes
\[
\Big(\mathcal{R}_0 \varphi_{i+1}(\theta) =\Big) \sum_{l=2}^{N_i+2} \frac{v_l^{(i+1)}}{(\tau_K)^2} T_l''\left(\frac{\theta}{\tau_K}\right) = \sum_{l=0}^{N_i} q_{l}^{(i)} T_l\left(\frac{\theta}{\tau_K}\right) \Big(=\phi_i(\theta) \Big).
\]
Expression \eqref{eq:extension_sigma0} now follows by noting that 
\begin{equation}
\label{eq:chebyshev_derivative2}
\begin{split}
T_0(t) =& T_2''(t)/4, \\
T_1(t) =& T_3''(t)/24, \\
T_2(t) =& T_4''(t)/48-T_2''(t)/6 \text{ and }\\
T_l(t) =& \frac{T_{l+2}''(t)}{4(l+1)(l+2)} - \frac{T_l''(t)}{2(l+1)(l-1)} + \frac{T_{l-2}''(t)}{4(l-1)(l-2)} \text{ for } l\geq 3.
\end{split}
\end{equation}
The expressions for $v_1^{(i+1)}$ follows directly from \eqref{eq:structure_preserving_operator_D2} and using this result $v_0^{(i+1)}$ can be computed from \eqref{eq:structure_preserving_operator_D1}.
\section{Derivation of the extension step for $\sigma=\jmath \omega$}
\label{appendix:extension_step_jomega}
Let $\phi_{i}$ and $\varphi_{i+1}$ be as defined in \eqref{eq:approach1_phi} and \eqref{eq:approach1_varphi}, then the equality $\mappedoperator\varphi_{i+1} = \phi_{i}$ becomes
\begin{multline*}
\left[\frac{2\jmath\omega}{\tau_K} \sum_{l=1}^{N_i+1} v_l^{(i+1)} T_l'\left(\frac{\theta}{\tau_K}\right) + \sum_{l=2}^{N_i+1} \frac{v_l^{(i+1)}}{\tau_K^2}T_l''\left(\frac{\theta}{\tau_K}\right)\right] e^{\jmath \omega\theta} + \\ \left[-\frac{2\jmath\omega}{\tau_K} \sum_{l=1}^{N_i+1} \overline{v_l^{(i+1)}} T_l'\left(\frac{\theta}{\tau_K}\right) + \sum_{l=2}^{N_i+1} \frac{\overline{v_l^{(i+1)}}}{\tau_K^2}T_l''\left(\frac{\theta}{\tau_K}\right)\right] e^{-\jmath \omega\theta} \\= \sum_{l=0}^{N_i} q_l^{(i)} T_l\left(\frac{\theta}{\tau_K}\right) e^{\jmath \omega \theta} + \sum_{l=0}^{N_i} \overline{q_l^{(i)}} T_l\left(\frac{\theta}{\tau_K}\right) e^{-\jmath \omega \theta}.
\end{multline*}
Matching the terms associated with $e^{\jmath \omega \theta}$ and $e^{-\jmath \omega \theta}$, gives the equality
\[
\frac{2\jmath\omega}{\tau_K} \sum_{l=1}^{N_i+1} v_l^{(i+1)} T_l'\left(\frac{\theta}{\tau_K}\right) + \sum_{l=2}^{N_i+1} \frac{v_l^{(i+1)}}{\tau_K^2}T_l''\left(\frac{\theta}{\tau_K}\right) = q_l^{(i)} T_l\left(\frac{\theta}{\tau_K}\right).
\]
Using \eqref{eq:chebyshev_derivative2} in combination with
\begin{equation*}
\begin{split}
T_1'(t) &= T_2''(t)/4 \text{ and }\\
T_l'(t) & = \frac{T_{l+1}''(t)}{2(l+1)}-\frac{T_{l-1}''(t)}{2(l-1)} \text{ for } l\geq 2
\end{split}
\end{equation*}
results in \eqref{eq:update_formula_approach1}. The expression for $v_0^{(i+1)}$ follows again from \eqref{eq:structure_preserving_operator_D1} and \eqref{eq:structure_preserving_operator_D2}.

\section*{Acknowledgments}
	The first author would like to thank P. Schwerdtner, M. Voigt and V.Mehrmann for the fruitful discussions. \newline
This work was supported by the project C14/17/072 of the KU Leuven
Research Council and the project G092721N of the Research Foundation-Flanders (FWO - Vlaanderen).

\bibliographystyle{siamplain}
\bibliography{ref}
\end{document}